\numberwithin{equation}{section}
\DeclareRobustCommand\widecheck[1]{{\mathpalette\@widecheck{#1}}}
\def\@widecheck#1#2{%
    \setbox\z@\hbox{\m@th$#1#2$}%
    \setbox\tw@\hbox{\m@th$#1%
       \widehat{%
          \vrule\@width\z@\@height\ht\z@
          \vrule\@height\z@\@width\wd\z@}$}%
    \dp\tw@-\ht\z@
    \@tempdima\ht\z@ \advance\@tempdima2\ht\tw@ \divide\@tempdima\thr@@
    \setbox\tw@\hbox{%
       \raise\@tempdima\hbox{\scalebox{1}[-1]{\lower\@tempdima\box
\tw@}}}%
    {\ooalign{\box\tw@ \cr \box\z@}}}
\newcommand{\ourtitle}{Reconstruction formulas for Photoacoustic Imaging in Attenuating Media}
\title{\ourtitle}
\author{Otmar Scherzer$^{1,2}$\\{\footnotesize\href{mailto:otmar.scherzer@univie.ac.at}{otmar.scherzer@univie.ac.at}}
\and Cong Shi$^1$\\{\footnotesize\href{mailto:cong.shi@univie.ac.at}{cong.shi@univie.ac.at}}}
\let\oldtheequation\theequation
\renewcommand\tagform@[1]{\maketag@@@{\ignorespaces#1\unskip\@@italiccorr}} 
\renewcommand\theequation{(\oldtheequation)}
\def\equationautorefname~{}
\newtheorem{lemma}{Lemma}[section]
\newaliascnt{proposition}{lemma}
\newaliascnt{corollary}{lemma}
\newaliascnt{theorem}{lemma}
\newtheorem{theorem}[theorem]{Theorem}
\newaliascnt{definition}{lemma}
\newtheorem{definition}[definition]{Definition}
\newaliascnt{assumption}{lemma}
\newaliascnt{example}{lemma}
\theoremstyle{nonumberplain}
\newtheorem{remark}{Remark}
\newtheorem{proof}{Proof}
\newcommand{\N}{\mathds{N}}
\newcommand{\R}{\mathds{R}}
\newcommand{\C}{\mathds{C}}
\renewcommand{\H}{\mathds{H}}
\let\RE\Re
\let\Re=\undefined
\DeclareMathOperator{\Re}{\RE e}
\let\IM\Im
\let\Im=\undefined
\DeclareMathOperator{\Im}{\IM m}
\DeclareMathOperator{\supp}{supp}
\newcommand{\vx}{\mathbf{x}}
\newcommand{\vxi}{\boldsymbol{\xi}}
\renewcommand{\i}{\mathrm i}
\renewcommand{\d}{\,\mathrm d}
\newcommand{\abs}[1]{\left| #1 \right|}
\newcommand{\set}[1]{\left\{ #1\right\}}
\newcommand{\Ff}[1]{\mathcal{F}[#1]}
\newcommand{\IFf}[1]{\mathcal{F}^{-1}[#1]}
\newcommand{\pseudo}{{\mathcal A}_\kappa}
\renewcommand{\S}{\mathcal{S}}
\newcommand{\temp}[1]{\S'(\R \times \R^{#1})}
\newcommand{\schwarz}[1]{\S(\R \times \R^{#1})}
\newcommand{\opint}[1]{{\mathcal I \left[ #1 \right] }}
\newcommand{\pa}{p^a}
\newcommand{\qa}{q^a}
\newcommand{\Bp}[1]{\mathcal{W}^{-1}\left[ #1 \right]}
\newcommand{\AttOp}[1]{\mathcal{B} \left[ #1 \right]}
\newcommand{\AttOpInv}[1]{\mathcal{B}^{-1} \left[ #1 \right]}
\newcommand{\kinf}{k_\infty}
\newcommand{\Id}{\mathcal{I}d}
\newcommand{\Top}{\mathcal{T}}
\newcommand{\intb}[1]{\int\limits_{#1=-\infty}^\infty \!\!\!}
\begin{document}

\maketitle
\hspace*{1em}
\parbox[t]{0.49\textwidth}{\footnotesize
\hspace*{-1ex}$^1$Computational Science Center\\
University of Vienna\\
Oskar-Morgenstern-Platz 1\\
A-1090 Vienna, Austria}
\parbox[t]{0.4\textwidth}{\footnotesize
\hspace*{-1ex}$^2$Johann Radon Institute for Computational and Applied Mathematics \\
\hspace*{1em}(RICAM)\\
Altenbergerstra{\ss}e 69\\
A-4040 Linz, Austria}
\vspace*{2em}
\begin{abstract}
 In this paper we study the problem of photoacoustic inversion in a \emph{weakly}
 attenuating medium. We present explicit reconstruction formulas in such media
 and show that the inversion based on such formulas is moderately ill--posed.
 Moreover, we present a numerical algorithm for imaging and demonstrate in
 numerical experiments the feasibility of this approach.
\end{abstract}

\section{Introduction}
When a probe is excited by a short electromagnetic (EM) pulse, it absorbs part of the EM-energy, and expands as a reaction,
which in turn
produces a pressure wave. In \emph{photoacoustic experiments}, using measurements of the pressure wave, the ability of the
medium to transfer absorbed EM-energy into pressure waves is visualized and used for diagnostic purposes.
Common visualizations, see \cite{Wan09}, are based on the assumptions that the specimen can be \emph{uniformly
illuminated}, is acoustically \emph{non-attenuating}, and that the \emph{sound-speed} and \emph{compressibility} are constant.

In mathematical terms, the photoacoustic imaging problem consists in calculating the compactly supported
\emph{absorption density function} $h:\R^3 \to \R$, appearing as a source term in the wave equation
\begin{equation} \label{eqWaveEquation}
\begin{aligned}
\partial_{tt}p(t,\vx)-\Delta p(t, \vx)&=\delta'(t)h(\vx),\quad &&t\in\R,\;\vx \in \R^3, \\
p(t,\vx)&=0,\quad&&t<0,\;\vx\in\R^3,
\end{aligned}
\end{equation}
from some measurements over time of the pressure $p$ on a two-dimensional manifold $\Gamma$ outside of the
specimen, that is outside of the support of the absorption density function. This problem has been studied extensively
in the literature (see e.g. \cite{KucKun08,WanAna11,Kuc14}, to mention just a few survey articles).

Biological tissue has a non-vanishing viscosity, thus there is thermal consumption of energy. These effects can be described
mathematically by \emph{attenuation}. Common models of such are the \emph{thermo-viscous} model \cite{KinFreCopSan00},
its modification \cite{KowSchBon11}, Szabo's power law \cite{Sza94,Sza95} and a causal modification \cite{KowSch12},
Hanyga \&  Seredy{'n}ska \cite{HanSer03}, Sushilov \& Cobbold \cite{SusCob04},
and the Nachman--Smith--Waag model \cite{NacSmiWaa90}.
Photoacoustic imaging in attenuating medium then consists in computing the absorption density function $h$ from measurements
of the attenuated pressure $\pa$ on a surface containing the object of interest. The attenuated pressure equation reads as
follows
\begin{equation}\label{eq:WEA3}
\begin{aligned}
\pseudo[\pa](t,\vx)-\Delta \pa(t,\vx)&=\delta'(t)h(\vx),\quad &&t\in\R,\;\vx\in\R^3, \\
\pa(t,\vx)&=0,\quad&&t<0,\;\vx\in\R^3,
\end{aligned}
\end{equation}
where $\pseudo$ is the pseudo-differential operator defined in frequency domain (see \autoref{eqAttenuationOp}).
The formal difference between \autoref{eqWaveEquation} and \autoref{eq:WEA3} is that the second time derivative
operator $\partial_{tt}$ is replaced by a pseudo-differential operator $\pseudo$.

The literature on Photoacoustics in attenuating media concentrates on time-reversal and attenuation compensation
based on power laws: We mention the $k$-wave toolbox implementation and the according papers
\cite{CoxTre10,TreZhaCox10}, \cite{BurGruHalNusPal07,HuaNieSchWanAna12}.
In \cite{ElbSchShi16_report} several attenuation laws from the literature have been cataloged into
two classes, namely \emph{weak} and \emph{strong} attenuation laws.
Power laws lead, in general, to severely ill--posed photoacoustic imaging problem, while mathematically
sophisticatedly derived models, like the Nachman-Smith-Waag model
\cite{NacSmiWaa90}, lead to moderately ill--posed problems.

The paper is based on the premise that Photoacoustics is moderately ill--posed, and we therefore concentrate
on photoacoustic inversion in weakly attenuating media, which have not been part of extensive analytical and
numerical studies in the literature. Another goal of this work is to derive \emph{explicit} reconstruction
formulas for the absorption density function $h$ in attenuating media. Previously there have been derived
asymptotical expansions in the case of small absorbers \cite{AmmBreGarWah12,KalSch13}.

\subsection*{Notation}
We use the following notations:
\begin{itemize}
 \item For $s=0,1,2,\ldots$ we denote by $\schwarz{s}$ the Schwartz-space of
       complex valued functions and its dual space, the space of \emph{tempered distribution}, is denoted by
       $\temp{s}$.

       We abbreviate $\S = \schwarz{3}$ and $\S'=\temp{3}$.
 \item For $\phi \in \S(\R)$ we define the Fourier-transform by
       \begin{equation*}
        \hat{\phi}(\omega) =
        \frac{1}{\sqrt{2 \pi}}\intb{t} e^{\i \omega t} \phi(t)\d t,
       \end{equation*}
       and the one-dimensional inverse Fourier-transform is given by
       \begin{equation*}
        \check{\varphi}(t) =
        \frac{1}{\sqrt{2 \pi}} \intb{\omega} e^{-\i \omega t} \varphi(\omega)\d \omega.
       \end{equation*}
 \item Let $\varphi \in \S(\R)$ and $\psi \in \S (\R^3)$.
       The Fourier-transform $\Ff{\cdot}: \S' \to \S'$ on the space of tempered distributions is defined by
       \begin{equation}\label{eq:op_tempered_fourier}
         \left<\Ff{u},\varphi \otimes \psi\right>_{\S',\S} =
         \left<u, \check{\varphi} \otimes \psi \right>_{\S',\S}.
       \end{equation}
       Note that for functions $u \in \S$ we have
       \begin{equation*}
         \left<\Ff{u},\varphi\otimes\psi\right>_{\S',\S} =
         \int\limits_{\R \times \R^3} u(t, \vx) \overline{\check{\varphi}(t) \psi(\vx)} \d t \d \vx \;.
       \end{equation*}
       We are identifying distributions and functions and we are writing in the following for all $u \in \S'$
       \begin{equation*}
       \begin{aligned}
         \Ff{u}(\omega,\vx) &= \frac{1}{\sqrt{2\pi}} \intb{t} e^{\i \omega t} u(t,\vx)\d t \,\text{ and }\\
         \IFf{y}(t,\vx) &= \frac{1}{\sqrt{2\pi}} \intb{\omega} e^{-\i \omega t} y (\omega,\vx)\d \omega.
       \end{aligned}
       \end{equation*}
 \item We define the attenuation operator $\pseudo[\cdot]: \S' \rightarrow \S'$ by
       \begin{equation}\label{eqAttenuationOp}
        \left< \pseudo[u], \phi\otimes\psi\right>_{\S',\S} = -
        \left< u, \widecheck{\overline{\kappa^2} \hat{\phi}} \otimes\psi\right>_{\S',\S}.
       \end{equation}
       This means that if $u \in \schwarz{3}$ then
       \begin{equation}
       \label{eq:pseudo}
         \pseudo[u](t,\vx) = -\IFf{\kappa^2 \Ff{u}}(t,\vx),\quad \omega \in \R,\;\vx \in \R^3.
       \end{equation}
 \item $\opint{\cdot}$ denotes the \emph{time integration operator on the space of tempered distributions} and is given by
       \begin{equation}\label{eq:int_tempered_distribution}
         \left<\opint{u},\phi\otimes\psi\right>_{\S',\S} =
         - \left<u,\phi' \otimes \psi \right>_{\S',\S},
       \end{equation}
       and we write formally for $u \in \S'$
       \begin{equation} \label{eq:intop}
         u \to \opint{u}(t,\vx) = \int\limits_{-\infty}^t u(\tau,\vx)\,d\tau.
       \end{equation}
 \item \autoref{eq:WEA3} and \autoref{eqWaveEquation} (here $\kappa(\omega)=\omega^2$)
       are understood in a distributional sense, which means that
       for all $\phi \in \S(\R)$ and $\psi \in \S (\R^3)$
       \begin{equation}\label{eq:paWeakForm}
       \begin{aligned}
        ~ & \left<\pa,\widecheck{\overline{\kappa^2} \hat{\phi}} \otimes\psi \right>_{\S',\S} +
        \left<\pa, \phi \otimes \Delta_{\vx} \psi \right>_{\S',\S} \\
        = & \phi'(0)
        \left<h,\psi\right>_{\S'(\R^3),\S(\R^3)}\;.
       \end{aligned}
       \end{equation}
\end{itemize}

\section{Attenuation} \label{sec:attenuation}
Attenuation describes the physical phenomenon that certain frequency components of acoustic waves are
attenuated more rapidly over time.
Mathematically this is encoded in the function $\kappa$ defining the pseudo-differential operator $\pseudo$.
A physically and mathematically meaningful $\kappa$ has to satisfy the following properties
(see \cite{ElbSchShi16_report}):
\begin{definition}\label{deAttCoeff}
 We call a non-zero function $\kappa\in C^\infty(\R;\overline\H)$, where $\H =\{z\in\C : \Im z>0\}$
 denotes the upper half complex plane and $\overline{\H}$ its closure in $\C$, an \emph{attenuation coefficient} if
 \begin{enumerate}
  \item\label{enAttCoeffPolBdd}
       all the derivatives of $\kappa$ are polynomially bounded. That is, for every $\ell\in\N_0$ there exist
       constants $\kappa_1>0$ and $N\in\N$ such that
       \begin{equation}\label{eqAttCoeffPolBdd}
        |\kappa^{(\ell)}(\omega)| \le \kappa_1(1+|\omega|)^N,
       \end{equation}
  \item\label{enAttCoeffHol}
       There exists a holomorphic continuation $\tilde\kappa:\overline{\H}\to\overline{\H}$ of $\kappa$ on the upper
       half plane, that is, $\tilde\kappa\in C(\overline{\H};\overline{\H})$ with $\tilde\kappa|_{\R}=\kappa$ and
       $\tilde\kappa:\H\to\overline{\H}$ is holomorphic, and there exists constants $\tilde\kappa_1>0$ and
       $\tilde N\in\N$ such that
       \[ |\tilde\kappa(z)|\le \tilde\kappa_1(1+|z|)^{\tilde N}\quad\text{for all}\quad z\in\overline{\H}.\]
  \item\label{enAttCoeffSymm}
        $\kappa$ is symmetric: That is, $\kappa(-\omega)=-\overline{\kappa(\omega)}$ for all $\omega\in\R$.
  \item There exists some constant $c>0$ such that the holomorphic extension $\tilde\kappa$ of the attenuation
        coefficient $\kappa$ satisfies
        \begin{equation*}
         \Im(\tilde\kappa(z)-\tfrac zc)\ge0\quad\text{for every } z\in\overline\H.
        \end{equation*}
 \end{enumerate}
\end{definition}

\begin{remark}
The four conditions in \autoref{deAttCoeff} on $\kappa$ encode the following physical properties
of the attenuated wave equation (see \cite{ElbSchShi16_report}):
\begin{description}
 \item{The condition \autoref{eqAttCoeffPolBdd}} in \autoref{deAttCoeff} ensures that the product $\kappa^2u$ of $\kappa^2$
       with an arbitrary tempered distribution $u\in\S'$ is again in $\S'$ and therefore, the operator $\pseudo: \S' \to \S'$ is
       well-defined on the space of tempered distributions.
 \item{The second condition} guarantees that the solution of the attenuated wave equation is causal.
 \item{Condition three} ensures that real valued distributions (such as the pressure) are mapped to real valued distributions.
       That is $\pa$ is real valued if the absorption density $h$ is real.
 \item{The forth condition} guarantees that the solution $\pa \in\mathcal S'(\R\times\R^3)$ of the equation \autoref{eq:WEA3}
       \emph{propagates with finite speed} $c>0$. That is
       \[ \supp \pa\subset\{(t,x)\in\R\times\R^3: |x|\le ct+R\} \] whenever $\supp h\subset B_R(0)$.
\end{description}
\end{remark}

In the literature there have been documented two classes of attenuation models:
\begin{definition}\label{deAttenuation}
We call an attenuation coefficient $\kappa\in C^\infty(\R;\overline\H)$ (see \autoref{deAttCoeff})
\begin{itemize}
\item a \emph{weak attenuation} coefficient if there exists a constant $0 \leq \kinf \in \R$ and a bounded function
      $k_*\in C^\infty(\R;\C)\cap L^2(\R;\C)$ such that
      \begin{equation}\label{eq:weak_attenuation}
       \kappa(\omega) = \omega +\i \kinf +k_*(\omega) \text{ for all } \omega\in\R.
      \end{equation}
      In particular, $\kappa$ is \emph{constantly attenuating}, if $\kappa$ is a weak attenuation coefficient with
      $k_* \equiv 0$.
      That is, there exists a constant $\kinf \ge 0$ such that
      \begin{equation}\label{eq:constant_attenuation}
       \kappa(\omega) = \omega +\i \kinf \text{ for all } \omega\in\R.
      \end{equation}
\item $\kappa$ is called \emph{strong attenuation coefficient} if there exist constants $\kappa_0 > 0$, $\beta > 0$, and $\omega_0 > 0$
      \begin{equation}\label{eq:strong_attenuation}
       \Im \kappa(\omega) \geq \kappa_0 \abs{\omega}^\beta \text{ for all } \omega \in \R \text{ with } \abs{\omega} \geq \omega_0.
      \end{equation}
\end{itemize}
\end{definition}

For such attenuation coefficients we proved in \cite{ElbSchShi16_report} well-posedness of the attenuated wave equation:

\begin{lemma}\label{le:wellposedness}
 Let $\kappa$ be an attenuation coefficient. Then the solution $\pa$ of the equation \autoref{eq:WEA3}
 exists and is a real-valued tempered distribution in $\R\times\R^3$.

 Moreover, $\qa :=\opint{\pa}$ is a tempered distribution and satisfies the equation
 \begin{equation}\label{eq:int_wave_eq}
 \begin{aligned}
 \pseudo[\qa](t,\vx)-\Delta \qa(t,\vx)&=\delta(t)h(\vx),\quad &&t\in\R,\;\vx\in\R^3, \\
 \qa(t,\vx)&=0,\quad&&t<0,\;\vx\in\R^3,
 \end{aligned}
 \end{equation}
 and in Fourier domain
 \begin{equation}\label{eq:int_Helmholtz}
  \kappa^2(\omega) \Ff{\qa}(\omega,\vx)+\Delta_\vx \Ff{\qa}(\omega, \vx)= -\frac{1}{\sqrt{2\pi}}h(\vx).
 \end{equation}
\end{lemma}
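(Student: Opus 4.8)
The plan is to pass to the temporal Fourier domain, where the convolution operator $\pseudo$ becomes multiplication by $-\kappa^2(\omega)$ (by \autoref{eq:pseudo}) and the space-time equations decouple into a one-parameter family of Helmholtz equations indexed by $\omega$. Applying $\Ff{\cdot}$ to \autoref{eq:int_wave_eq} and using $\Ff{\delta}(\omega)=(2\pi)^{-1/2}$ reproduces \autoref{eq:int_Helmholtz} at once, while the same computation for \autoref{eq:WEA3}, together with $\Ff{\delta'}(\omega)=-\i\omega(2\pi)^{-1/2}$ and the relation $\Ff{\pa}=-\i\omega\Ff{\qa}$ (which is exactly $\qa=\opint{\pa}$), yields the inhomogeneous equation $(\Delta_\vx+\kappa^2(\omega))\Ff{\pa}=\i\omega(2\pi)^{-1/2}h$. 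Thus the analytic content reduces to solving, for each $\omega\in\R$, the equation $(\Delta_\vx+\kappa^2(\omega))v=-(2\pi)^{-1/2}h$, and then reassembling the family $\{v(\omega,\cdot)\}_\omega$ into a tempered distribution whose inverse transform is causal and real valued.

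For fixed $\omega$, the hypothesis $\kappa\in C^\infty(\R;\overline\H)$ forces $\Im\kappa(\omega)\ge0$, so the outgoing fundamental solution $G_\omega(\vx)=\e^{\i\kappa(\omega)\abs{\vx}}/(4\pi\abs{\vx})$ of $(\Delta_\vx+\kappa^2(\omega))G_\omega=-\delta$ is well defined and obeys $\abs{G_\omega(\vx)}\le 1/(4\pi\abs{\vx})$. Hence I would take the candidate $\Ff{\qa}(\omega,\vx)=(2\pi)^{-1/2}\int_{\R^3}\bigl(\e^{\i\kappa(\omega)\abs{\vx-\vy}}/(4\pi\abs{\vx-\vy})\bigr)h(\vy)\d\vy$. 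Since $h$ is compactly supported and $\bigl|\e^{\i\kappa(\omega)\abs{\vx-\vy}}\bigr|=\e^{-\Im\kappa(\omega)\abs{\vx-\vy}}\le1$, this is bounded uniformly in $\omega$ and decays like $\abs{\vx}^{-1}$; the family therefore defines a tempered distribution on $\R\times\R^3$, whose inverse temporal Fourier transform $\qa$ lies in $\S'$. Defining then $\pa:=\partial_t\qa$ gives a candidate solution, with $\qa=\opint{\pa}$ holding tautologically.

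Causality, $\supp\qa\subset\{(t,\vx):t\ge0\}$, is the crux, and is where the holomorphic extension enters. By the Paley-Wiener-Schwartz theorem it suffices to continue $\omega\mapsto\Ff{\qa}(\omega,\vx)$ holomorphically to the upper half-plane $\H$ with polynomial bounds. Such a continuation comes from substituting the holomorphic extension $\tilde\kappa$ from \autoref{deAttCoeff} for $\kappa$: because $\tilde\kappa(\overline\H)\subset\overline\H$, the factor $\e^{\i\tilde\kappa(\omega)\abs{\vx-\vy}}$ stays bounded by $1$ and holomorphic for $\Im\omega\ge0$, and the polynomial growth bound on $\tilde\kappa$ supplies the estimates needed to invoke the theorem. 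Real-valuedness then follows from the symmetry $\kappa(-\omega)=-\overline{\kappa(\omega)}$ of \autoref{deAttCoeff}, which gives $\kappa^2(-\omega)=\overline{\kappa^2(\omega)}$ and hence $\Ff{\qa}(-\omega,\vx)=\overline{\Ff{\qa}(\omega,\vx)}$ (using that $h$ is real); this is precisely the condition for $\qa$, and therefore also $\pa=\partial_t\qa$, to be real valued.

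It then remains to confirm that $\pa$ solves \autoref{eq:WEA3} in the weak sense \autoref{eq:paWeakForm} and is the unique causal tempered solution. The former is obtained by reading the Fourier-domain identities backwards, using $\Ff{\pseudo[u]}=-\kappa^2\Ff{u}$ and $\Ff{\partial_t\qa}=-\i\omega\Ff{\qa}$; differentiation in $t$ preserves supports, so $\pa$ inherits causality, and $\opint{\pa}=\qa$ because both vanish for $t<0$ and share the derivative $\pa$. Uniqueness reduces to showing that the only causal tempered solution of the homogeneous family $(\Delta_\vx+\kappa^2(\omega))v=0$ is $v\equiv0$, which again rests on the outgoing/holomorphic structure, with the finite-speed condition of \autoref{deAttCoeff} ruling out spurious incoming contributions. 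I expect the genuinely delicate point to be the uniform-in-$\omega$ holomorphic and polynomial control required to apply Paley-Wiener; once that is secured, the remaining steps are bookkeeping in the Fourier variable.
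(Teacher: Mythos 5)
Your proposal is correct in structure but takes a genuinely different route from the paper, for the simple reason that the paper gives essentially no argument for the substantive first claim: it delegates existence, realness and causality of $\pa$ wholesale to \cite{ElbSchShi16_report}, and dispatches the statements about $\qa$ as an immediate consequence of the definitions. What you have written is a self-contained reconstruction of what that reference must supply: decouple in the temporal Fourier variable, solve the resulting Helmholtz family with the outgoing kernel $e^{\i\kappa(\omega)\abs{\vx-\vy}}/(4\pi\abs{\vx-\vy})$, use $\Im\kappa\ge 0$ for the uniform bound (hence temperedness), the holomorphic extension $\tilde\kappa$ together with Paley--Wiener--Schwartz for the support condition $t\ge 0$, and the symmetry $\kappa(-\omega)=-\overline{\kappa(\omega)}$ for real-valuedness. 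Each hypothesis of \autoref{deAttCoeff} is consumed exactly where it is needed, and your frequency-domain bookkeeping (the factors $-\i\omega$, the constant $(2\pi)^{-1/2}$, the derivation of \autoref{eq:int_Helmholtz}) is consistent with the paper's conventions. This buys transparency that the paper's citation does not.

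The one place where your sketch has a genuine thin spot is uniqueness, which matters because the lemma speaks of \emph{the} solution and the paper later invokes uniqueness of the solution of \autoref{eq:qa_equation_weak} in the proof of \autoref{thRelationQaQ}. Your reduction to ``the only causal tempered solution of the homogeneous family $(\Delta_\vx+\kappa^2(\omega))v=0$ is $v\equiv0$'' fails if read frequency by frequency: \autoref{deAttCoeff} permits $\Im\kappa(\omega)=0$ on the real line (the non-attenuating $\kappa(\omega)=\omega$ is itself admissible), and at such frequencies the homogeneous Helmholtz equation has nonzero tempered solutions (plane waves $e^{\i\kappa(\omega)\,\xi\cdot\vx}$). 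The fix is to run the argument in the open half-plane: for a causal tempered solution, $z\mapsto\Ff{\qa}(z,\cdot)$ is holomorphic in $\H$ and satisfies the equation with $\tilde\kappa(z)$ there; condition~4 of \autoref{deAttCoeff} gives $\Im\tilde\kappa(z)\ge\Im z/c>0$, hence $\tilde\kappa(z)^2\notin[0,\infty)$, the symbol $-\abs{\xi}^2+\tilde\kappa(z)^2$ never vanishes, and $\Delta_\vx+\tilde\kappa(z)^2$ is invertible on tempered distributions for every $z\in\H$. The difference of two causal solutions therefore vanishes on all of $\H$ and, by continuity of boundary values, on $\R$ as well. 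So your instinct that the finite-speed hypothesis is what rules out spurious contributions is right, but the mechanism is strict positivity of $\Im\tilde\kappa$ off the real axis, and this step needs to be written out rather than gestured at.
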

\begin{proof} The first item has been proven in \cite{ElbSchShi16_report}. The second item is an immediate consequence of the
 definition of a tempered distribution.
\end{proof}

\begin{remark}
\autoref{eq:int_wave_eq} has to be understood in a distributional sense:  That is $\qa \in \S'$ and satisfies
for every $\phi\in \S(\R)$ and $\psi\in \S(\R^3)$ the equation
\begin{equation}
\label{eq:qa_equation_weak}
 \left<\pseudo[q^a],\phi\otimes\psi\right>_{\S',\S}-\left<q^a,\phi\otimes\Delta_{\vx}\psi\right>_{\S',\S} = \phi(0)\left<h,\psi\right>_{\S'(\R^3),\S(\R^3)}.
\end{equation}
If $\kappa(\omega)=\omega$ (that is the case of the standard wave equation) $q=\qa$ solves the following equation
in a distributional sense
\begin{equation} \label{eqWaveEquationQ}
\begin{aligned}
\partial_{tt}q(t,\vx)-\Delta_\vx q(t, \vx)&=\delta(t)h(\vx),\quad &&t\in\R,\;\vx \in \R^3, \\
q(t,\vx)&=0,\quad&&t<0,\;\vx\in\R^3,
\end{aligned}
\end{equation}
and its Fourier-transform $\Ff{q}$ satisfies the Helmholtz equation
\begin{equation}\label{eqHelmHoltzQ}
\omega^2 \Ff{q}(\omega,\vx)+\Delta_\vx \Ff{q}(\omega, \vx)= -\frac{1}{\sqrt{2\pi}}h(\vx), \quad \omega\in\R,\;\vx \in \R^3.
\end{equation}
Again $q \in \temp{3}$ and satisfies \autoref{eqWaveEquationQ} in a distributional sense:
\begin{equation}
\label{eq:q_equation_weak}
\left<q,\partial_{tt}\phi\otimes\psi\right>_{\S',\S}-\left<q,\phi\otimes\Delta_{\vx}\psi\right>_{\S',\S} = \phi(0)\left<h,\psi\right>_{\S'(\R^3),\S(\R^3)}.
\end{equation}
The solution of \autoref{eqWaveEquationQ} can also be written as the solution of the initial value problem:
\begin{equation} \label{eqWaveEquationQ1}
\begin{aligned}
\partial_{tt}q(t,\vx)-\Delta_\vx q(t, \vx) &= 0,\quad && t > 0,\;\vx \in \R^3, \\
q(0,\vx)&=0,\quad&& \vx\in\R^3,\\
\partial_t q(0,\vx)&=h(\vx),\quad&& \vx\in\R^3.
\end{aligned}
\end{equation}
\end{remark}

In the following we derive a functional relation between $q$ and $\qa$, which is the basis of
analytical back-projection formulas in attenuating media.
\begin{theorem}
\label{eq:nu}
Let $\phi \in \S(\R)$ and define
\begin{equation*}
\nu_+[\phi](\tau):= \frac{1}{\sqrt{2\pi}} \intb{\omega}
                             e^{-\i \overline{\kappa(\omega)} \tau}\hat{\phi}(\omega) \d \omega \text{ for all } \tau \geq 0\;.
\end{equation*}
Then there exists a sequence $(a_m)_{m\geq 1}$ of real numbers satisfying $\sum_{m \geq 1} a_m 2^{m j}=(-1)^j$ and a function
$\vartheta \in C^\infty_0(\R;\R)$ such that $\vartheta(\tau)=1$ when $|\tau|<1$ and $\vartheta(\tau)=0$ when
$|\tau| \geq 2$ such that
the function $\nu: \R \to \C$, defined by
\begin{equation}\label{eq:nu_ext}
\nu(\tau):=\nu[\phi](\tau):=
       \left\{ \begin{array}{lcr}
                \nu_+[\phi](\tau) & \text{ for all } &\tau \geq 0,\\
                \sum_{m=0}^{\infty} a_m \nu_+[\phi](-2^m\tau)\vartheta(-2^m\tau) & \text{ for all } &\tau<0,
               \end{array}\right.
\end{equation}
is an element of the Schwartz space $\S(\R)$.
\end{theorem}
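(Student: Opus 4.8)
The statement is a Seeley-type smooth extension of $\nu_+[\phi]$ from the half-line $[0,\infty)$ to all of $\R$, dressed up with the cut-offs $\vartheta(-2^m\tau)$. The plan is to reduce the claim $\nu\in\S(\R)$ to three ingredients and then assemble them. First observe that $\vartheta(-2^m\tau)=0$ once $|2^m\tau|\ge 2$, i.e. once $|\tau|\ge 2^{1-m}$; hence for $\tau\le-2$ every summand in \autoref{eq:nu_ext} vanishes, so $\nu$ is supported in $[-2,\infty)$ and rapid decay at $-\infty$ is automatic. On $[0,\infty)$ we simply have $\nu=\nu_+[\phi]$, so rapid decay at $+\infty$ is exactly the rapid decay of $\nu_+[\phi]$. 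It therefore suffices to establish: (a) $\nu_+[\phi]\in C^\infty([0,\infty))$ with $\nu_+[\phi]$ and all its derivatives rapidly decreasing; (b) the existence of real coefficients $(a_m)$ with the stated moment identity and with $\sum_m\abs{a_m}2^{mk}<\infty$ for every $k\in\N_0$; and (c) that the two branches in \autoref{eq:nu_ext} match to infinite order at $\tau=0$.

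For (a) I would first get smoothness by differentiating under the integral sign: since $\abs{\e^{-\i\overline{\kappa(\omega)}\tau}}=\e^{-\tau\Im\kappa(\omega)}\le 1$ for $\tau\ge0$ (because $\Im\kappa\ge0$), all derivatives of $\kappa$ are polynomially bounded, and $\hat\phi\in\S(\R)$, each $\partial_\tau^j\nu_+[\phi](\tau)=\frac1{\sqrt{2\pi}}\intb{\omega}(-\i\overline{\kappa(\omega)})^j\e^{-\i\overline{\kappa(\omega)}\tau}\hat\phi(\omega)\d\omega$ converges and is continuous on $[0,\infty)$. For the decay I would peel off the attenuation using \autoref{eq:weak_attenuation} to write $\e^{-\i\overline{\kappa}\tau}=\e^{-\kinf\tau}\e^{-\i\omega\tau}\e^{-\i\overline{k_*}\tau}$, so that $\nu_+[\phi](\tau)=\e^{-\kinf\tau}\,\IFf{\hat\phi\,\e^{-\i\overline{k_*}\tau}}(\tau)$, whence $\tau^n\nu_+[\phi](\tau)=\i^{-n}\e^{-\kinf\tau}\,\IFf{\partial_\omega^n(\hat\phi\,\e^{-\i\overline{k_*}\tau})}(\tau)$ after $n$ integrations by parts in $\omega$. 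When $\kinf>0$ the factor $\e^{-\kinf\tau}$ dominates the at most polynomial-in-$\tau$ growth produced by differentiating the symbol $\e^{-\i\overline{k_*}\tau}$, so $\nu_+[\phi]$ and all derivatives decay exponentially; the constantly attenuating case $k_*\equiv0$ is transparent, since then $\nu_+[\phi](\tau)=\e^{-\kinf\tau}\phi(\tau)$. The case $\kinf=0$ is more delicate and I postpone it.

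Granting (b), the smoothness across $\tau=0$ follows cleanly. Put $G_0(\sigma):=\nu_+[\phi](\sigma)\,\vartheta(\sigma)$; by (a) we have $G_0\in C_0^\infty([0,\infty))$, and since $\vartheta\equiv1$ on $[0,1)$ we get $G_0^{(j)}(0)=\nu_+^{(j)}(0)$ for all $j$. The lower branch of \autoref{eq:nu_ext} is $\sum_m a_m\,G_0(-2^m\tau)$, whose $j$-th derivative is termwise bounded by $\abs{a_m}\,2^{mj}\,\norm{G_0^{(j)}}_\infty$; by the decay in (b) this series converges in every $C^j$-norm, so $\nu\in C^\infty((-\infty,0])$ and $\nu^{(j)}(0^-)=\sum_m a_m(-2^m)^j G_0^{(j)}(0)=\Big(\sum_m a_m(-2^m)^j\Big)\nu_+^{(j)}(0)$. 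The moment identity gives $\sum_m a_m(-2^m)^j=(-1)^j\sum_m a_m2^{mj}=1$, hence $\nu^{(j)}(0^-)=\nu_+^{(j)}(0)=\nu^{(j)}(0^+)$ for every $j\in\N_0$: the branches match to infinite order. Combined with (a) and the vanishing of $\nu$ on $(-\infty,-2)$, this yields $\nu\in\S(\R)$.

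The main obstacle is (b): exhibiting a single real sequence that solves the infinitely many moment equations $\sum_m a_m2^{mj}=(-1)^j$, $j\in\N_0$, while decaying fast enough that $\sum_m\abs{a_m}2^{mk}<\infty$ for every $k$. I would obtain $(a_m)$ as the Taylor coefficients of an entire function $G(z)=\sum_m a_mz^m$ subject to the interpolation conditions $G(2^j)=(-1)^j$; mere entireness of $G$ already forces $\limsup_m\abs{a_m}^{1/m}=0$, i.e. $\sum_m\abs{a_m}R^m<\infty$ for every $R$, which is the decay required. Such a $G$ can be built by Lagrange interpolation at the nodes $2^j$ from the canonical product $E(z)=\prod_{i\ge0}(1-z2^{-i})$ (entire of order $0$, with simple zeros exactly at the $2^i$), namely $G(z)=\sum_{j\ge0}(-1)^j\,E(z)\big/\big(E'(2^j)(z-2^j)\big)$. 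The estimate that makes everything work is the super-exponential growth of $\abs{E'(2^j)}=2^{-j}\prod_{i\ne j}\abs{1-2^{j-i}}$, whose factors with $i<j$ already contribute $\prod_{l=1}^{j}(2^l-1)\sim 2^{j(j+1)/2}$; this makes the interpolation series converge locally uniformly to an entire function and legitimizes the term-by-term differentiation used in the matching step. This is precisely Seeley's device for smooth extension across a boundary. The remaining gap is the $\kinf=0$ decay postponed in (a): there, differentiating $\e^{-\i\overline{k_*}\tau}$ costs powers of $\tau$ no longer absorbed by an exponential, and one must instead exploit $\Im k_*\ge0$ together with the Schwartz decay of $\hat\phi$ to recover the rapid decrease of $\nu_+[\phi]$.
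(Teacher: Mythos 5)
Your overall architecture (smoothness of $\nu_+[\phi]$ on $[0,\infty)$ by differentiation under the integral, a Seeley-type reflection with coefficients $a_m$ solving the moment equations, infinite-order matching at $\tau=0$, and rapid decay at $+\infty$) is the same as the paper's, and your parts (b) and (c) — the construction of $(a_m)$ by interpolation at the nodes $2^j$ and the $C^\infty$ matching across $0$ — are sound; the paper simply cites Seeley for exactly this. The genuine gap is the step you postpone and never close: the rapid decay of $\nu_+[\phi]$ and all its derivatives as $\tau\to+\infty$, which is where the paper invests essentially all of its effort.

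There are two problems with your treatment of the decay. First, the theorem is stated for an arbitrary attenuation coefficient in the sense of \autoref{deAttCoeff}, so you may not assume the weak decomposition $\kappa(\omega)=\omega+\i\kinf+k_*(\omega)$. Second, and more seriously, even in the weak case with $\kinf>0$ your argument does not work: after writing $\nu_+[\phi](\tau)=\e^{-\kinf\tau}\IFf{\hat\phi\,\e^{-\i\overline{k_*}\tau}}(\tau)$ and integrating by parts in $\omega$, every resulting term still carries the factor $\e^{-\i\overline{k_*(\omega)}\tau}$, whose modulus is $\e^{-\tau\Im k_*(\omega)}=\e^{\tau(\kinf-\Im\kappa(\omega))}$. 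Only $\Im\kappa\ge0$ is guaranteed, not $\Im k_*\ge0$ (which you invoke at the end but which is not an assumption), so this factor can be as large as $\e^{\kinf\tau}$ — for the paper's own NSW example one has $\Im\kappa(0)=0$ while $\kinf>0$. It therefore cancels your prefactor $\e^{-\kinf\tau}$ exactly, leaving the useless bound $\abs{\tau^n\nu_+[\phi](\tau)}\le C(1+\tau)^n$. Wherever $\Im\kappa$ is small, the decay must instead come from the oscillation of the phase, i.e.\ from a lower bound on $\abs{\kappa'}$ there. That is precisely what the paper supplies: Hörmander's non-stationary phase estimate for complex phases with nonnegative imaginary part (\autoref{th:st_phase}), applied with $f=-\overline{\kappa}$ and combined with the uniform lower bound $\abs{\kappa'(\omega)}^2+\Im\kappa(\omega)\ge C_2>0$ of \autoref{le:hoermander}, whose proof via the Nevanlinna representation of $\tilde\kappa$ occupies the appendix. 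Without a substitute for \autoref{eq:klb_derivative} and a non-stationary-phase argument exploiting it, your proof of the decay — and hence of $\nu[\phi]\in\S(\R)$ — cannot be completed.
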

\begin{proof}
\begin{enumerate}
 \item Using that for all $k \in \N_0$
       \begin{equation*}
         \psi_k(\tau,\omega) := \partial_\tau^k e^{-\i \overline{\kappa(\omega)} \tau} =
         (-\i)^k \overline{\kappa(\omega)}^k e^{-\i \overline{\kappa(\omega)} \tau} \text{ for all } \omega,\tau \in \R,
       \end{equation*}
       it follows from \autoref{deAttenuation} that, uniformly in $\tau$, for all $\omega \in \R$
       \begin{equation*}
         \abs{\psi_k(\tau,\omega)\hat{\phi}(\omega)}
         \leq \abs{\kappa(\omega)}^k e^{-\Im \kappa(\omega) \tau}\abs{\hat{\phi}(\omega)} \leq \abs{\kappa(\omega)}^k \abs{\hat{\phi}(\omega)}\;.
       \end{equation*}
       From \autoref{eqAttCoeffPolBdd} and $\hat{\phi} \in \S(\R)$ (in particular $\kappa \in L^\infty(\R;\C)$ and
       $\hat{\phi} \in C(\R;\C)$),
       $\omega \to \abs{\kappa(\omega)}^k \abs{\hat{\phi}(\omega)} \in L^1(\R; \R)$.
       Thus by interchanging integration and differentiation it follows that for $R \to \infty$
       \begin{equation}\label{eq:nu_ext_der}
         d^k \nu_+[\phi] (\tau) = \frac{1}{\sqrt{2\pi}} \intb{\omega}
                              \psi_k(\tau,\omega) \hat{\phi}(\omega)
                              \d \omega \text{ for all } \tau \geq 0
       \end{equation}
       and that these functions are continuous. Thus $\nu_+[\phi] \in C^{\infty}([0,\infty); \C)$.
 \item From \cite{See64} it follows that $\nu[\phi]$ defined in \autoref{eq:nu_ext} is in $C^\infty(\R; \C)$ and
       extends the function $\nu_+[\phi]$ defined on $[0,\infty)$.
 \item We are proving that $\nu_+[\phi]$ and all its derivative are faster decaying than polynomials in $\tau$ for
       $\tau\to +\infty$.

       For this purpose we use the \emph{stationary phase method} summarized in \autoref{th:st_phase}:

       Let $\theta \in C^\infty_0(\R; \R)$ be a mollifier satisfying $\theta(\omega)=1$
              when $|\omega|<1$ and $\theta(\omega)=0$ when $|\omega| \geq 2$.
              For $k \in \N$ and $R > 0$ fixed, we apply the stationary phase method with
              \begin{equation*}
               \omega \to f(\omega)=-\overline{\kappa(\omega)} \text{ and }
               \omega \to g_R(\omega):=\psi_k(\tau,\omega)\theta(\omega/R).
              \end{equation*}
              Below we are verifying the assumptions of the stationary phase method:
              \begin{itemize}
              \item $f \in C^\infty(\R;\C)$ by \autoref{deAttCoeff} and $g_R \in C^\infty_0(\R; \C)$, because it is the
                    product of a $C^\infty(\R;\C)$ function and the compactly supported function $\omega \to \theta(\omega/R)$.
              \item The second property
                    \begin{equation} \label{eq:help}
                     \Im f = \Im \kappa \geq 0\;.
                    \end{equation}
                    is an immediate consequence of the assumption $\kappa \in C^\infty(\R;\overline{\H})$ in
                    \autoref{deAttCoeff}.
              \end{itemize}
       Thus \autoref{eq:stationary_phase} can be applied with the functions $f=-\overline{\kappa}$ and $g_R$, and
       using \autoref{le:hoermander}
       \autoref{eq:klb_derivative} it follows that
       \begin{equation}\label{eq:stationary_phase_1}
        \begin{aligned}
         & \tau^l \left|\intb{\omega} e^{\i \tau f(\omega)}g_R(\omega) \d \omega\right|\\
         \leq &
         C_1\sum_{\alpha=0}^{l} \sup_{\omega\in\R} \left|d^\alpha g_R(\omega)\right| (|f'(\omega)|^2 + \Im f(\omega)) ^{\alpha/2-l}\\
         \leq &
         C_1\sum_{\alpha=0}^{l}\sup_{\omega\in\R}\left|d^\alpha g_R(\omega)\right|C_2^{\alpha/2-l}.
        \end{aligned}
       \end{equation}

       Next, we consider the limit $R \to \infty$.
       Because $\theta \in C_0^\infty(\R; \R)$ and $g \in \S(\R)$ and
       \begin{equation*}
       d^\alpha g_R(\omega)
       = \sum_{\beta=0}^{\alpha} d^{\alpha-\beta} g(\omega)
       \binom{\alpha}{\beta} \frac{1}{R^\beta} d^\beta \theta(\omega),
       \end{equation*}
       it follows that
       \begin{equation*}
       \left|d^\alpha g_R(\omega) - d^\alpha g(\omega)\right|
       \leq \left|
       \sum_{\beta=1}^{\alpha}d^{\alpha-\beta} g(\omega) \binom{\alpha}{\beta}\frac{1}{R^\beta}d^\beta \theta(\omega) \right| = \mathcal{O}(R^{-1}).
       \end{equation*}
       and from \autoref{eq:help} and the assumption $\tau \geq 0$ it follows
       \begin{equation*}
       \begin{aligned}
       ~ & \left|\intb{\omega} e^{\i \tau f(\omega)}g_R(\omega) \d \omega - \intb{\omega}
         e^{\i \tau f(\omega)}g(\omega) \d \omega\right| \\
       \leq & \intb{\omega} e^{-\tau \Im f(\omega)} \left|g_R(\omega)- g(\omega)\right|\d \omega
         \quad \leq  \intb{\omega} \left|g_R(\omega)- g(\omega)\right|\d \omega.
       \end{aligned}
       \end{equation*}
       Using the definition of $g_R$ it follows that
       \begin{equation*}
       \begin{aligned}
         \intb{\omega} \left|g_R(\omega)- g(\omega)\right|\d \omega &\leq \intb{\omega} \left|g(\omega)\right| \left| 1- \theta(\omega/R)\right| \d \omega\\
         &\leq \int\limits_{|\omega|\geq R} \left|g(\omega)\right| \d \omega.
         \end{aligned}
       \end{equation*}
       Since $g\in \S(\R)$ the last integral tends to $0$ for $R \to \infty$ and thus
       \autoref{eq:stationary_phase} holds even for the functions $\omega \to f(\omega)=-\overline{\kappa(\omega)}$ and
       $\omega \to g(\omega) = \psi_k(\tau,\omega)$, although they are not satisfying the assumptions of \autoref{th:st_phase}.
%
%

\end{enumerate}
\end{proof}

Because (according to \autoref{eq:nu}), for every $\phi \in \S(\R)$, $\nu[\phi] \in \S(\R)$, the operator from the following
definition is well-defined.
\begin{definition}
The \emph{attenuation solution operator} $\AttOp{\cdot}: \S' \to \S'$ is defined by
 \begin{equation} \label{eq:mult}
 \begin{aligned}
  \left<\AttOp{\rho}, \phi \otimes \psi\right>_{\S',\S} = &
  \left<\rho, \nu[\phi] \otimes \psi \right>_{\S',\S} \\
  & \text{ for all } \rho \in \S' \text{ and }
  \phi \in \S(\R), \psi \in \S (\R^3)\;.
 \end{aligned}
 \end{equation}
\end{definition}

\begin{remark}
 In a weakly attenuating medium $\kappa(\omega) = \omega +\i \kinf + k_*(\omega)$, and therefore,
 for every $t, \tau \in \R$
      \begin{equation}
      \label{eq:tau}
      \begin{aligned}
       k(t,\tau)
       &:= \IFf{e^{\i(\cdot+\i \kappa _\infty+ k_*(\cdot))\tau}}(t) \\
       &=  e^{-\kinf \tau} \IFf{e^{\i k_*(\cdot)\tau}}(t-\tau)\\
       &=  e^{-\kinf \tau} \IFf{1+ (e^{\i k_*(\cdot)\tau}-1)}(t-\tau) \\
       & = \sqrt{2 \pi}e^{-\kinf \tau} \delta(t-\tau) + e^{-\kinf \tau} \IFf{e^{\i k_*(\cdot)\tau}-1}(t-\tau).
      \end{aligned}
      \end{equation}
      Because there exists a constant $C>0$ such that
      \begin{equation*}
       \abs{e^{\i k_*(\omega)\tau}-1} \leq C \abs{ k_*(\omega)\tau} \text{ for all }
       \omega,\tau \in \R,
      \end{equation*}
      it follows from \autoref{deAttenuation} (stating that $k_* \in L^2(\R; \C)$)
      and Plancharel's identity that
      \begin{equation*}
      \begin{aligned}
              \omega &\to e^{\i k_*(\omega)\tau}-1 \in L^2(\R;\C) \text{ and } \\
              t & \to e^{-\kinf \tau} \IFf{e^{\i k_*(\cdot)\tau}-1}(t-\tau) \in L^2(\R;\C) \text{ for all } \tau \in \R\;.
      \end{aligned}
      \end{equation*}
      Now, assume that $\rho \in C^0(\R \times \R^3;\C) \cap \S'$ with support in $[0,\infty) \times \R^3$,
      $\phi \in \S(\R)$ and $\psi \in \S(\R^3)$
 \begin{equation*}
 \begin{aligned}
  ~ & \left<\rho, \nu[\phi] \otimes \psi \right>_{\S',\S}\\
  = &
  \int\limits_{\R^3} \intb{\tau} \rho(\tau,\vx) \overline{\nu[\phi](\tau) \psi(\vx)}\,\d \tau \d \vx\\
  = &
  \frac{1}{\sqrt{2\pi}} \int\limits_{\R^3} \overline{\psi(\vx)}
  \intb{\tau} \rho(\tau,\vx) \intb{\omega}
                             e^{\i \kappa(\omega) \tau}\hat{\overline{\phi}}(\omega) \d \omega \,\d \tau \d \vx\;.
 \end{aligned}
 \end{equation*}
 Using Parseval's identity we get
     \begin{equation}\label{eq:kernel}
 \begin{aligned}
  ~ & \left<\rho, \nu[\phi] \otimes \psi \right>_{\S',\S}\\
  = &
  \frac{1}{\sqrt{2\pi}} \int\limits_{\R^3} \intb{\tau} \rho(\tau,\vx) \overline{\psi(\vx)} \intb{t}
                             k(t,\tau) \overline{\phi(t)} \d t \,\d \tau \d \vx\\
  = &
  \int\limits_{\R^3} \intb{t} \overline{\psi(\vx)} \overline{\phi(t)}
  \underbrace{ \left( \frac{1}{\sqrt{2\pi}} \intb{\tau} k(t,\tau) \rho(\tau,\vx)
                               \d \tau \right)}_{= \AttOp{\rho}(t,\vx) } \d t \d \vx.
 \end{aligned}
 \end{equation}
 Then for $\rho \in C^0(\R \times \R^3; \C) \cap \S'$ it follows from \autoref{eq:kernel} and \autoref{eq:tau} that
      \begin{equation}\label{eq:operatorT_1}
      \boxed{
      e^{\kinf t}\AttOp{\rho}(t,\vx) = (\Id + \Top)[\rho](t,\vx)\,,}
      \end{equation}
      where
      \begin{equation}
       \label{eq:top}
       \Top[\rho](t,\vx) =
       \frac{1}{\sqrt{2\pi}} \intb{\tau} e^{\kinf (t-\tau)} \IFf{e^{\i k_*(\omega)\tau}-1}(t-\tau)\rho(\tau,\vx)\,\d \tau\;.
      \end{equation}
\end{remark}

\begin{theorem}\label{thRelationQaQ}
Let
$q = \opint{p}$ and $\qa = \opint{\pa}$, where $p$ and $\pa$ are the solutions of the
equations \autoref{eqWaveEquation} and \autoref{eq:WEA3}, respectively.
Then
\begin{equation}\label{eq:qa_q}
 \qa = \AttOp{q}\;.
\end{equation}
\end{theorem}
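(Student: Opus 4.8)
The plan is to prove the distributional identity by showing that $\AttOp{q}$ satisfies the \emph{same} weak integrated wave equation \autoref{eq:qa_equation_weak} as $\qa$, and then to invoke uniqueness of the causal solution. The heuristic behind this is that in the frequency variable the passage from $q$ to $\qa$ is the substitution $\omega\mapsto\kappa(\omega)$ (compare the Helmholtz equations \autoref{eqHelmHoltzQ} and \autoref{eq:int_Helmholtz}); the operator $\AttOp{\cdot}$ is the time-domain incarnation of this substitution, and replacing $\partial_{tt}$ by $\pseudo$ in the equation corresponds to replacing $\omega^2$ by $\kappa^2(\omega)$. I will make this rigorous purely through the weak formulations, so that no analytic continuation of $\Ff{q}$ is explicitly needed.

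First I would record two elementary facts about $\nu_+$. Differentiating twice under the integral sign in the definition of $\nu_+$ (justified by the same dominated-convergence estimate that yields \autoref{eq:nu_ext_der}) and using $\partial_\tau^2 e^{-\i\overline{\kappa(\omega)}\tau}=-\overline{\kappa(\omega)}^2 e^{-\i\overline{\kappa(\omega)}\tau}$ together with the fact that $\widecheck{\overline{\kappa^2}\hat\phi}$ has Fourier transform $\overline{\kappa^2}\hat\phi$, one obtains
\[
\nu_+[\widecheck{\overline{\kappa^2}\hat\phi}](\tau)=-\,\nu_+[\phi]''(\tau)\qquad\text{for all }\tau\ge 0 .
\]
(Condition \autoref{eqAttCoeffPolBdd} guarantees that $\kappa^2$ is a Schwartz multiplier, so $\widecheck{\overline{\kappa^2}\hat\phi}\in\S(\R)$ and $\nu$ may be applied to it.) Moreover, by Fourier inversion $\nu_+[\phi](0)=\tfrac{1}{\sqrt{2\pi}}\intb{\omega}\hat\phi(\omega)\d\omega=\phi(0)$. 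The two Schwartz functions $\nu[\widecheck{\overline{\kappa^2}\hat\phi}]$ and $-\nu[\phi]''$ thus coincide on $[0,\infty)$; they may however differ for $\tau<0$, because the Seeley extension of a second derivative is not the second derivative of the Seeley extension.

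The main step is the computation itself. Using the definition \autoref{eqAttenuationOp} of $\pseudo$ and then the definition \autoref{eq:mult} of $\AttOp{\cdot}$,
\[
\left<\pseudo[\AttOp{q}],\phi\otimes\psi\right>
=-\left<\AttOp{q},\widecheck{\overline{\kappa^2}\hat\phi}\otimes\psi\right>
=-\left<q,\nu[\widecheck{\overline{\kappa^2}\hat\phi}]\otimes\psi\right>,
\]
while $\left<\AttOp{q},\phi\otimes\Delta_\vx\psi\right>=\left<q,\nu[\phi]\otimes\Delta_\vx\psi\right>$. Here the decisive point is that $q$ is causal, $\supp q\subset[0,\infty)\times\R^3$: the difference $\nu[\widecheck{\overline{\kappa^2}\hat\phi}]+\nu[\phi]''$ is smooth, vanishes on $[0,\infty)$, and is therefore flat at the origin, so a standard localization argument shows that it is annihilated by $q$. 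Hence $\nu[\widecheck{\overline{\kappa^2}\hat\phi}]$ may be replaced by $-\nu[\phi]''=-\partial_{tt}\nu[\phi]$ inside the pairing, giving $\left<\pseudo[\AttOp{q}],\phi\otimes\psi\right>=\left<q,\partial_{tt}\nu[\phi]\otimes\psi\right>$. Subtracting and applying the weak wave equation \autoref{eq:q_equation_weak} for $q$ with the admissible test function $\nu[\phi]\in\S(\R)$ yields
\[
\left<\pseudo[\AttOp{q}],\phi\otimes\psi\right>-\left<\AttOp{q},\phi\otimes\Delta_\vx\psi\right>
=\nu[\phi](0)\left<h,\psi\right>=\phi(0)\left<h,\psi\right>,
\]
which is exactly \autoref{eq:qa_equation_weak}.

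It remains to see that $\AttOp{q}$ is causal, so that uniqueness applies. By \autoref{eq:kernel}--\autoref{eq:tau} the distribution $\AttOp{q}$ is obtained by integrating $q$ against the kernel $k(\cdot,\tau)=\IFf{e^{\i\kappa(\cdot)\tau}}$; since $\kappa$ extends holomorphically to $\overline\H$ with $\Im\kappa\ge 0$, the symbol $\omega\mapsto e^{\i\kappa(\omega)\tau}$ is, for each fixed $\tau\ge0$, bounded and holomorphic in the upper half plane, so a Paley--Wiener contour shift shows $\supp k(\cdot,\tau)\subset[0,\infty)$. Together with $\supp q\subset[0,\infty)\times\R^3$ this gives $\supp\AttOp{q}\subset[0,\infty)\times\R^3$. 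Thus $\AttOp{q}$ is a causal tempered-distribution solution of \autoref{eq:qa_equation_weak}, the same problem solved by $\qa$, and the uniqueness guaranteed by the well-posedness result \autoref{le:wellposedness} forces $\AttOp{q}=\qa$. I expect the genuinely delicate point to be the one flagged in the second and third paragraphs: the equality $\nu[\widecheck{\overline{\kappa^2}\hat\phi}]=-\nu[\phi]''$ holds only on $[0,\infty)$, and one must use the causality of $q$ (flatness of the discrepancy at the origin) to argue that the mismatch for $\tau<0$ does not contribute to the pairing.
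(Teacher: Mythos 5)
Your argument is correct and follows essentially the same route as the paper's proof: both show that $\AttOp{q}$ satisfies the weak formulation \autoref{eq:qa_equation_weak} by moving the test function through $\nu[\cdot]$, using $\nu\left[\widecheck{\overline{\kappa^2}\hat{\phi}}\right]=-\partial_{\tau\tau}\nu[\phi]$ together with $\nu[\phi](0)=\phi(0)$ and the weak wave equation for $q$, and then invoking uniqueness. You additionally make explicit two points the paper passes over silently, namely that the identity between $\nu\left[\widecheck{\overline{\kappa^2}\hat{\phi}}\right]$ and $-\nu[\phi]''$ is only guaranteed on $[0,\infty)$ because of the Seeley extension (so the causality of $q$ is genuinely needed there), and that $\AttOp{q}$ itself is causal so that the uniqueness statement behind \autoref{le:wellposedness} applies.
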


\begin{proof}
Let $\phi \in \S(\R)$ and $\psi \in \S(\R^3)$.
Then, from \autoref{eq:qa_q}, the definition of the Fourier-transform \autoref{eq:op_tempered_fourier},
the definition of $\AttOp{\cdot}$ in \autoref{eq:mult}, because $\nu[\phi] \in \S(\R)$ (see \autoref{eq:nu_ext})
and because $q$ solves \autoref{eqWaveEquationQ} it follows that
\begin{equation} \label{eq:ak}
 \begin{aligned}
  ~& \left< \pseudo[\AttOp{q}], \phi \otimes  \psi \right>_{\S',\S} - \left< \AttOp{q}, \phi \otimes \Delta_{\vx}\psi \right>_{\S',\S}\\
  =& - \left< \AttOp{q}, \widecheck{\overline{\kappa^2}\hat{\phi}} \otimes  \psi \right>_{\S',\S}
     - \left< \AttOp{q}, \phi \otimes \Delta_{\vx}\psi \right>_{\S',\S}\\
  =& -\left< q, \nu\left[\widecheck{\overline{\kappa^2}\hat{\phi}}\right] \otimes  \psi \right>_{\S',\S} -
     \left< q, \nu[\phi] \otimes \Delta_\vx \psi \right>_{\S',\S}\\
  =& - \left<q, \nu\left[\widecheck{\overline{\kappa^2}\hat{\phi}}\right] \otimes  \psi \right>_{\S',\S} -
            \left<  q, \partial_{\tau \tau} \nu[\phi] \otimes  \psi \right>_{\S',\S}\\
   &\quad         + \nu[\phi](0)\left< h, \psi \right>_{\S'(\R^3),\S(\R^3)}
  \end{aligned}
 \end{equation}
 We are representing every term on the right hand side:
 \begin{enumerate}
 \item From \autoref{eq:nu} it follows that
       \begin{equation*}
        \nu[\phi](0) = \frac{1}{\sqrt{2\pi}}\intb{\omega}\hat{\phi}(\omega)\d\omega
        = \check{\hat{\phi}}(0)
        = \phi(0)
       \end{equation*}
       and thus
       \begin{equation*}
        \begin{aligned}
        \nu[\phi](0)\left< h, \psi \right>_{\S'(\R^3),\S(\R^3)} = &\phi(0)\left< h, \psi \right>_{\S'(\R^3),\S(\R^3)}.
        \end{aligned}
       \end{equation*}
  \item The first term on the right hand side of \autoref{eq:ak} can be represented as follows:
        \begin{equation*}
          \left<q, \nu\left[\widecheck{\overline{\kappa^2} \hat{\phi}}\right] \otimes  \psi \right>_{\S',\S}
        = \left<q, \frac{1}{\sqrt{2\pi}} \intb{\omega}
           \overline{\kappa^2(\omega)} e^{-\i \overline{\kappa(\omega)} \cdot}
           \hat{\phi}(\omega)\d\omega \otimes  \psi \right>_{\S',\S}\!\!\!\!\!\!.
        \end{equation*}
  \item For the second term we find that
        \begin{equation*}
         \begin{aligned}
          \left< q, \partial_{\tau \tau} \nu[\phi] \otimes  \psi \right>_{\S',\S}
          = & \left< q, \frac{1}{\sqrt{2\pi}}\intb{\omega} \partial_{\tau \tau}
               e^{-\i \overline{\kappa(\omega)} \, \cdot}\; \hat{\phi}(\omega)\d\omega\otimes \psi \right>_{\S',\S}\\
          = & - \left< q, \frac{1}{\sqrt{2\pi}}\intb{\omega} \overline{\kappa^2(\omega)}
               e^{-\i \overline{\kappa(\omega)} \cdot}\hat{\phi}(\omega)\d\omega\otimes \psi \right>_{\S',\S}.
\end{aligned}
\end{equation*}
\end{enumerate}
The sum of the first and second term vanishes and thus from \autoref{eq:ak} it follows that
\begin{equation*}
 \begin{aligned}
  ~& \left< \pseudo[\AttOp{q}], \phi \otimes  \psi \right>_{\S',\S} - \left< \AttOp{q}, \phi \otimes \Delta_{\vx}\psi \right>_{\S',\S}\\
  =& \phi(0)\left< h, \psi \right>_{\S'(\R^3),\S(\R^3)}\,,
  \end{aligned}
 \end{equation*}
which shows that $\AttOp{q}$ solves \autoref{eq:qa_equation_weak}. Since the solution of this equation is unique it follows that
$\qa = \AttOp{q}$.
\end{proof}

\section{Reconstruction formulas}
In this section we provide explicit reconstruction formulas for the absorption density $h$
(the right hand side of \autoref{eq:WEA3}) in {\bf attenuating} media. The basis of these
formulas are exact reconstruction formulas in {\bf non-attenuating} media.

In the case of non-attenuating media the problem of photoacoustic tomography consists in determining the absorption
density $h$ from measurement data of
\begin{equation*}
 m(t,\vxi) := p(t, \vxi) \text{ for all } t>0, \vxi \in \Gamma\,,
\end{equation*}
where $\Gamma$ denotes the measurement surface and $p$ is the solution of \autoref{eqWaveEquation}.

Let $\mathcal{W}$ be the operator which maps $h$ to $p$. The most universal (meaning applicable for a series of
measurement geometries $\Gamma$) formula
for $\Bp{\cdot}$ is due to Xu \& Wang \cite{XuWan05}. Several different formulas of such
kind were presented and analyzed in \cite{KucKun08,Nat12,Kuc12}.
The formula of Xu \& Wang \cite{XuWan05} in $\R^3$ reads as follows:
\begin{equation}\label{eq:UniversalBP-3D}
h(x)=\frac{2}{\Omega_0} \int\limits_{\vxi \in \Gamma} \frac{p(|\vxi-\vx|, \vxi)-|\vxi-\vx| \frac{\partial p}{\partial t}(|\vxi-\vx|, \vxi)}{|\vxi-\vx|^2 } \left( \mathbf{n}_{\vxi}\cdot \frac{\vxi-\vx}{|\vxi-\vx|} \right) \d s(\Gamma)
\end{equation}
where $\Omega_0$ is $2\pi$ for a planar geometry and $4\pi$ for cylindrical and spherical geometries and
$\mathbf{n}_{\vxi}$ is the outer normal vector for the measurement surface $\Gamma$.

From \autoref{thRelationQaQ} we get an explicit reconstruction formula in the case of attenuating media:
\begin{theorem}
Under the assumption that the universal back-projection can be applied in the non-attenuating case, we have
\begin{equation} \label{eq:explicit}
h=\Bp{\partial_t \AttOpInv{(t,\vxi) \to \qa(t,\vxi)}}.
\end{equation}
\end{theorem}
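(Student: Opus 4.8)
The plan is to read the formula \autoref{eq:explicit} as the composition of three steps that successively undo the forward model, and to check that each step is meaningful and invertible on causal tempered distributions. The forward model factors as $h \mapsto p = \mathcal{W}[h] \mapsto q = \opint{p} \mapsto \qa = \AttOp{q}$, where the last identity is precisely \autoref{thRelationQaQ}. Reversing this chain is exactly what \autoref{eq:explicit} asserts, so the task reduces to justifying each inversion and the fact that the composition is legitimate.

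First I would record the three elementary identities on which everything rests. By the hypothesis that the universal back-projection applies, $p = \mathcal{W}[h]$ is invertible with $h = \Bp{p}$, the inversion being given explicitly by \autoref{eq:UniversalBP-3D}. By \autoref{le:wellposedness} we have $q = \opint{p}$ and $\qa = \opint{\pa}$; since time-integration and time-differentiation are mutually inverse on causal distributions, $\partial_t q = p$. This is consistent with the passage from the $\delta'(t)h$ source in \autoref{eqWaveEquation} to the $\delta(t)h$ source in \autoref{eq:int_wave_eq}, obtained by one time-differentiation. Finally, \autoref{thRelationQaQ} supplies $\qa = \AttOp{q}$.

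The substantive step, and the main obstacle, is to show that $\AttOp{\cdot}$ is invertible, so that $\AttOpInv{\qa} = q$ is well-defined. Here I would invoke the factorization \autoref{eq:operatorT_1}, which writes $\AttOp{\cdot}$, after multiplication by the pointwise-invertible weight $e^{\kinf t}$, as $\Id + \Top$ with $\Top$ the integral operator \autoref{eq:top}. The key observation is that the kernel of $\Top$, built from $e^{-\kinf\tau}\IFf{e^{\i k_*(\cdot)\tau}-1}(t-\tau)$, is causal: it vanishes for $t<\tau$ by the causality encoded in \autoref{deAttCoeff}, and it lies in $L^2$ precisely because the weak-attenuation hypothesis gives $k_*\in L^2\cap L^\infty$. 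Hence $\Top$ is a Volterra-type operator in the time variable, so $\Id + \Top$ is invertible through the Neumann series $\sum_{n\ge 0}(-\Top)^n$, which converges because a causal Volterra operator is quasi-nilpotent on each bounded time interval. Consequently $\AttOpInv{\rho} = (\Id+\Top)^{-1}[e^{\kinf t}\rho]$ is well-defined, and this is the only place where the weak-attenuation structure is genuinely used.

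With invertibility in hand the chain closes: applying $\AttOpInv{\cdot}$ to $\qa = \AttOp{q}$ returns $q$; differentiating in time gives $\partial_t\AttOpInv{\qa} = \partial_t q = p$; and applying the back-projection operator gives $\Bp{\partial_t\AttOpInv{\qa}} = \Bp{p} = h$, which is \autoref{eq:explicit}. The remaining points to verify are routine: that each composition is admissible on $\S'$, and that $\AttOpInv{\cdot}$ and $\partial_t$ act only in the time variable (pointwise in $\vxi$) while $\Bp{\cdot}$ acts through the surface integral over $\Gamma$, so that performing the attenuation inversion and time-differentiation first and back-projecting afterwards is consistent with interpreting $\qa(t,\vxi)$ as boundary data on $\Gamma$.
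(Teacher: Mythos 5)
Your proposal is correct and follows essentially the same route as the paper, which states this theorem without a separate proof as an immediate consequence of \autoref{thRelationQaQ} ($\qa=\AttOp{q}$), the identity $p=\partial_t q$ from \autoref{le:wellposedness}, and the assumed applicability of the universal back-projection $\Bp{\cdot}$. The only substantive addition is your justification that $\AttOp{\cdot}$ is invertible via the causal Volterra structure of $\Top$ in \autoref{eq:operatorT_1}--\autoref{eq:top}; the paper leaves this implicit when it writes $(\Id+\Top)^{-1}$ in \autoref{eq:UniversalBP-Attenuation}, and your sketch (causality of the kernel from \autoref{deAttCoeff} plus quasi-nilpotence on bounded time intervals) is sound.
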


In the following we study the attenuation operator $\AttOp{\cdot}$ and its inverse
in weakly attenuating media.

\begin{description}
\item From \autoref{eq:operatorT_1} it follows
      \begin{equation}\label{eq:UniversalBP-Attenuation}
        h = \Bp{ \partial_t (\Id + \Top)^{-1}[(t,\vxi) \to e^{\kinf t}\qa(t,\vxi)]}.
      \end{equation}
\item In particular, in the case of a constantly attenuating medium the kernel of the integral
      operator $\AttOp{\cdot}$ simplifies to ($ k_*(\omega)=0$)
      \begin{equation*}
      \frac{1}{\sqrt{2\pi}} \IFf{e^{\i \kappa(\cdot)\tau}}(t) = \frac{1}{\sqrt{2\pi}} e^{-\kinf \tau} \IFf{e^{\i \cdot \tau}}(t)
      = e^{-\kinf \tau}\delta(t-\tau).
      \end{equation*}
      Thus from \autoref{eq:kernel} it follows that
      \begin{equation}
      \label{eq:operatorT_w}
      \boxed{
       \AttOp{q}(t,\vx) = \frac{1}{\sqrt{2\pi}}  e^{-\kinf t} q(t,\vx)\;.}
      \end{equation}
      Thus the operator $\AttOp{\cdot}$ is a multiplication operator and the reconstruction formula
      \autoref{eq:UniversalBP-Attenuation} rewrites to
      \begin{equation}\label{eq:UniversalBP-ConstAttenuation}
       h=\Bp{(t,\vxi) \to \partial_t  \left( e^{\kinf t}\qa(t,\vxi) \right)}.
      \end{equation}
      Using that $q = e^{\kinf t}\qa$, we get explicit formulas for the time derivatives of $q$:
      \begin{equation*} \begin{aligned}
      \partial_t q(t, \vxi)&= e^{\kinf t}\left(\kinf \qa(t, \vxi)+\partial_t \qa(t, \vxi)\right),\\
      \partial_{tt} q(t, \vxi) &= e^{\kinf t}\left(\kinf^2 \qa(t, \vxi)+ 2\kinf \partial_t \qa(t, \vxi)+\partial_{tt} \qa(t, \vxi)\right).
\end{aligned} \end{equation*}
Therefore, inserting the representation of the derivatives in \autoref{eq:UniversalBP-3D} we get

\begin{equation}\label{eq:UniversalBackProjection}
\boxed{
\begin{aligned}
\tilde{\qa} (t,\vxi) =& \partial_t \qa(t,\vxi) - t \partial_{tt} \qa(t,\vxi)\,,\\
h(\vx) =& \frac{2}{\Omega_0} \int\limits_{\vxi \in \Gamma} \frac{\tilde{\qa} (|\vxi-\vx|,\vxi)}{|\vxi-\vx|^2} \left(\mathbf{n}_{\vxi} \cdot \frac{\vxi-\vx}{|\vxi-\vx|}\right) \d s(\Gamma).
\end{aligned}}
\end{equation}
\end{description}

\section{Numerical experiments}
\label{sec:num}
In this section we describe an algorithm for photoacoustic inversion in a weakly attenuating
medium, in which case the attenuation coefficient is $\omega \to \kappa(\omega) = \omega +\i \kinf + k_*(\omega)$,
with $k_* \in L^2(\R;\C) \cap C^\infty(\R;\C)$.

The numerical inversions and examples will be performed in $\R^2$ for the two-dimensional attenuated wave equation.
This is consistent with a distributional solution of \autoref{eq:WEA3} in $\R^3$ where
$(x,y,z) \to h(x,y)$ is considered a distribution in $\R^3$, which is independent of the third variable.
In this case $\pa$ can be considered a two-dimensional distribution \autoref{eq:WEA3}, which solves the
two-dimensional attenuated wave equation:
\begin{equation}\label{eq:WEA2}
\begin{aligned}
\pseudo[\pa](t,x_1,x_2)-\Delta \pa(t,x_1,x_2)&=\delta'(t)h(x_1,x_2),\quad &&t\in\R,\;(x_1,x_2)\in\R^2, \\
\pa(t,x_1,x_2)&=0,\quad&&t<0,\;(x_1,x_2)\in\R^2.
\end{aligned}
\end{equation}
The two-dimensional universal back-projection formula from \cite{BurGruHalNusPal07}, which is used below,
is given by
\begin{equation}\label{eq:UniversalBP-2D}
\begin{aligned}
h(x)&=-\frac{4}{\Omega_0} \int\limits_{\vxi \in \Gamma} \int\limits_{t=|\vxi-\vx|}^{\infty}
      \left(\frac{(\partial_t (t^{-1}p))(t, \vxi)}{\sqrt{t^2-|\vxi-\vx|^2}}\d t\right) \mathbf{n}_{\vxi}\cdot (\vxi-\vx)\d s(\Gamma),
\end{aligned}
\end{equation}
where $\Omega_0$ is $2\pi$ for a line measurement geometry and $4\pi$ for a circular measurement geometry,
$\mathbf{n}_{\vxi}$ is the outer normal vector for the curve $\Gamma$.

We assume that the attenuated photoacoustic pressure $\pa$ is measured on a set of $N$ points on a measurement
curve $\Gamma$ at $N_T$ uniformly distributed time points
\begin{equation*}
t_i = i \Delta_T, \; i=1,\ldots,N_T \; \text{ where } \Delta_T = \frac{T}{N_T}.
\end{equation*}
In our experiments $\Gamma$ will either be a circle of radius $R$, where the $N$ measurement points are radially uniformly
distributed,
\begin{equation*}
\vxi_j = R(\cos(j\Delta_\xi), \sin(j\Delta_\xi)),\; j=0,1,\ldots,N-1\text{ where } \Delta_\xi = \frac{2\pi}{N}
\end{equation*}
or on a segment of length $2l$ of the $x$-axis, in which case
\begin{equation*}
\vxi_j= (2j\Delta_x-1,0), \text{ where } \Delta_x = l/N.
\end{equation*}
In this case we consider $h$ to be supported in the upper half-space.

The evaluation of the integral operator $\opint{.}$ is numerically realized as follows: For every measurement point
$\set{\vxi_j: j=0,1,\ldots,N-1}$
\begin{equation}\label{eq:dis_relation_pq}
\qa(t_i, \vxi_j) = \Delta_T \sum_{n=1}^{i} \pa(t_n,\vxi_j).
\end{equation}
The relation $\qa = \AttOp{q}$ from \autoref{eq:qa_q} is realized numerically as follows: Because we assume a weakly
attenuation medium $\AttOp{\cdot}$ (defined in \autoref{eq:mult}) is an integral operator with kernel $k$ defined
in \autoref{eq:tau}. We use the Taylor-series expansion of the exponential function $\tau \to e^{\i k_*(\omega) \tau}$
and get
\begin{equation}\label{eq:Fourier_rj}
\IFf{e^{\i k_*(\omega)\tau}-1}(t)=\sum_{k=1}^{\infty}\frac{\tau^k}{k!}\IFf{(\i k_*(\omega))^k}(t).
\end{equation}
Inserting \autoref{eq:Fourier_rj} into \autoref{eq:top}  and taking into account \autoref{eq:operatorT_1} and \autoref{eq:qa_q} we get for all
$i=1,\ldots,N_T$ that
\begin{equation}\label{qa-texpress2}
\begin{aligned}
 ~ & \qa(t_i,\vxi_j) \\
 = & e^{- \kinf t_i} q(t_i,\vxi_j) +\frac{1}{\sqrt{2\pi}}\intb{\tau}
e^{-\kinf \tau}\sum_{k=1}^{\infty}\frac{\tau^{k}}{k!}r_k(t_i-\tau)q(\tau, \vxi_j)\d \tau,
\end{aligned}
\end{equation}
where
\begin{equation*}
s \to r_k(s) := \IFf{\i^k k_*^k(\omega)}(s).
\end{equation*}
The integral on the right hand side of \autoref{qa-texpress2} is approximated for numerical purposes as follows:
\begin{equation}\label{qa-texpress3}
\frac{\Delta_T}{\sqrt{2\pi}} \sum_{m=1}^{N_T} e^{-\kinf t_m} \sum_{k=1}^{\infty}
\frac{t_m^{k}}{k!}r_k(t_i-t_m)q(t_m, \vxi_j).
\end{equation}
This expression is represented as a matrix-vector multiplication with
vector $\vec{q}_j = (q(t_m,\vxi_j))_{m=1,\ldots,N_T}$ and matrix with entries
\begin{equation}\label{eq:MatrixElement}
b_{im} = \frac{\Delta_T}{\sqrt{2\pi}} e^{-\kinf t_m} \sum_{k=1}^{\infty} \frac{t_m^{k}}{k!}r_k(t_i-t_m) \text{ with }
 1 \leq i \leq N_T,\text{ and } 1 \leq m \leq N_T.
\end{equation}
Then it follows from \autoref{qa-texpress2} that
\begin{equation}
\label{eq:matrix}
\vec{q}_j^a = (e^{-\kinf t_i} I + B) \vec{q}_j\,,
\end{equation}
which is the discretized version of \autoref{eq:operatorT_1}.
To get numerical values for the entries of $B$, the terms $r_k(t_i-t_m)$ have to be numerically calculated. For $k=1$,
\begin{equation}\label{eq:dis_r1}
r_1(t)=\frac{1}{\sqrt{2\pi}} \intb{\omega} \i k_*(\omega)e^{-\i \omega t}\d \omega,
\end{equation}
which can be evaluated by numerical integration for all $t_i$. When $k>1$, $r_{k}$ is a convolution of $r_{k-1}$ and $r_1$
and thus
\begin{equation}\label{eq:dis_rj}
r_{k}(t) = \frac{1}{\sqrt{2\pi}} (r_{1}*r_{k-1})(t) = \frac{1}{\sqrt{2\pi}} \int\limits_{0}^{t}r_{1}(\tau)r_{k-1}(t-\tau)\d \tau.
\end{equation}
Numerically, we approximate the convolution by
\begin{equation*}
r_{k}(t_i)\approx \frac{\Delta_T}{\sqrt{2\pi}} \sum_{m=1}^{i}r_1(t_m)r_{k-1}(t_i-t_m).
\end{equation*}

We summarize the inversion process in a pseudo-code, where we truncate the Taylor-series \autoref{eq:Fourier_rj} at
the tenth coefficient (the number ten has been found from numerical simulations):
\begin{center}
\begin{algorithm}[H]
\SetAlgoLined
\KwData{The measurements are denoted by $P^a_{i,j}=\pa(t_i,\vxi_j)$ for all $i=1,\ldots,N_T$ and $j=0,\ldots,N-1$}
\KwResult{Numerical calculation of the absorption density $h_l=h(x_l)$}

\For{$1\leq i\leq N_T$}{
$r_{i,1} \leftarrow \frac{1}{\sqrt{2\pi}} \intb{\omega} e^{-\i \omega t_i} (\i k_*(\omega)) \d\omega$\;
}

\For{$1 \leq k\leq 10$}{
\For{$1 \leq m\leq n\leq N_T$}{
$(F_k)_{i,m}\leftarrow \frac{1}{\sqrt{2\pi}} r_{i-m, k}t_m^k e^{-\kinf t_m}$\;
}
\For{$1\leq i\leq N_T$}{
$r_{i, k+1}=\frac{\Delta_T}{\sqrt{2\pi}}\cdot(\sum_{m=1}^{i}r_{m, k}r_{i-m,k})$\;
}
}
$B\leftarrow \operatorname{diag}(e^{-\kinf t_1},e^{-\kinf t_2},\dots)+\Delta_T \sum_{k=1}^{10}\frac{F_k}{k!}$\;

\For{$1\leq i\leq N_T$}{
$Q^a_{i,j}\leftarrow Q^a_{i-1,j}+\Delta_T P^a_{i,j}$\;
}
$Q\leftarrow Q^aB^{-1}$\;
\For{$1\leq i\leq N_T$}{
$P_{i,j}\leftarrow \frac{Q_{i,j}-Q_{i-1,j}}{\Delta_T}$\;
}
Calculate $h_l$ by applying the back-projection operator $\Bp{\cdot}$ on $P_{i,j}$\;
\caption{Pseudocode for reconstructing the absorption density $h$.}
\label{pseudo-code}
\end{algorithm}
\end{center}

\subsection*{Numerical experiments}
We assume that $h$ is a function with compact support in $\R^2$. We calculated $p$, the solution of \autoref{eqWaveEquation}
using the k-wave toolbox \cite{TreCox10}. By integrating $p$ at the points $\vxi_j$, $j=0,\ldots,N-1$ over time with \autoref{eq:dis_relation_pq}
we get $q(t_i,\vxi_j)$ for $i=1,\ldots,N_T$ and $j=0,\ldots,N-1$. Then we find $\qa(t_i,\vxi_j)$
by matrix-vector multiplication \autoref{eq:matrix}.

In order to avoid \emph{inverse crimes} we used different discretization points in space and time for
the simulation of the forward data and the inversion. The forward problem is simulated with $N_T=500$ and
$N=896$, while the inverse problem is solved with $N_T=443$ and $N=849$.
The absorption density function $h:\R^2 \to \R$ with support in $(-0.8,0.8)^2$ is
the Shepp-Logan phantom \cite{SheLog74}.
In all numerical experiments the material parameter $\kinf = 0.45$.

\subsection*{Circular measurement geometry}
In these examples the measurement geometry is a circle with radius $R=1.7$ on which there are recorded
data on $N=849$ uniformly distributed measurement points. Moreover, the time length is $6$ and thus $\Delta_T = 6/N_T=6/443$.

We consider a constantly attenuating medium, with attenuation coefficient $\kappa(\omega) =\omega+ \i \kinf$.
\autoref{fig:const_circle} shows the ground truth (top left) and the simulated pressure data $\pa$ on $\Gamma$ over time.
Two reconstructions are presented: The first one is obtained by applying the universal back-projection formula
\autoref{eq:UniversalBP-2D}
(middle left), while the middle right image shows the reconstruction obtained with \autoref{pseudo-code}. The quantitative
values of ground truth and the two reconstructions are plotted on the bottom.
\begin{figure}[h]
  \centering
  \includegraphics[width=0.4\textwidth]{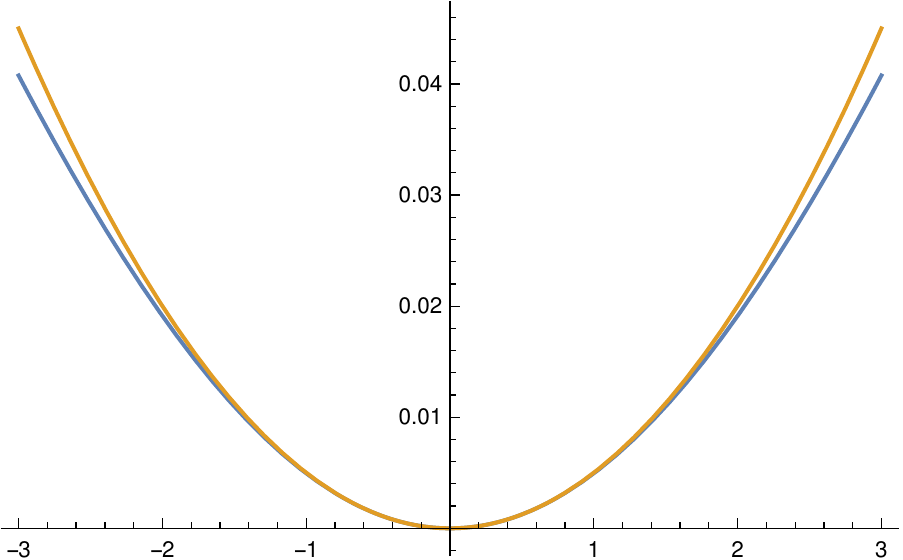}
  \caption{\label{fig:NSWImkappa} Blue curve corresponds to $\Im \kappa(\omega)$ of NSW model. Red curve corresponds to power law $0.005\omega^{2}$.}
\end{figure}
\begin{figure}[h]
  \centering
  \includegraphics[width=0.4\textwidth]{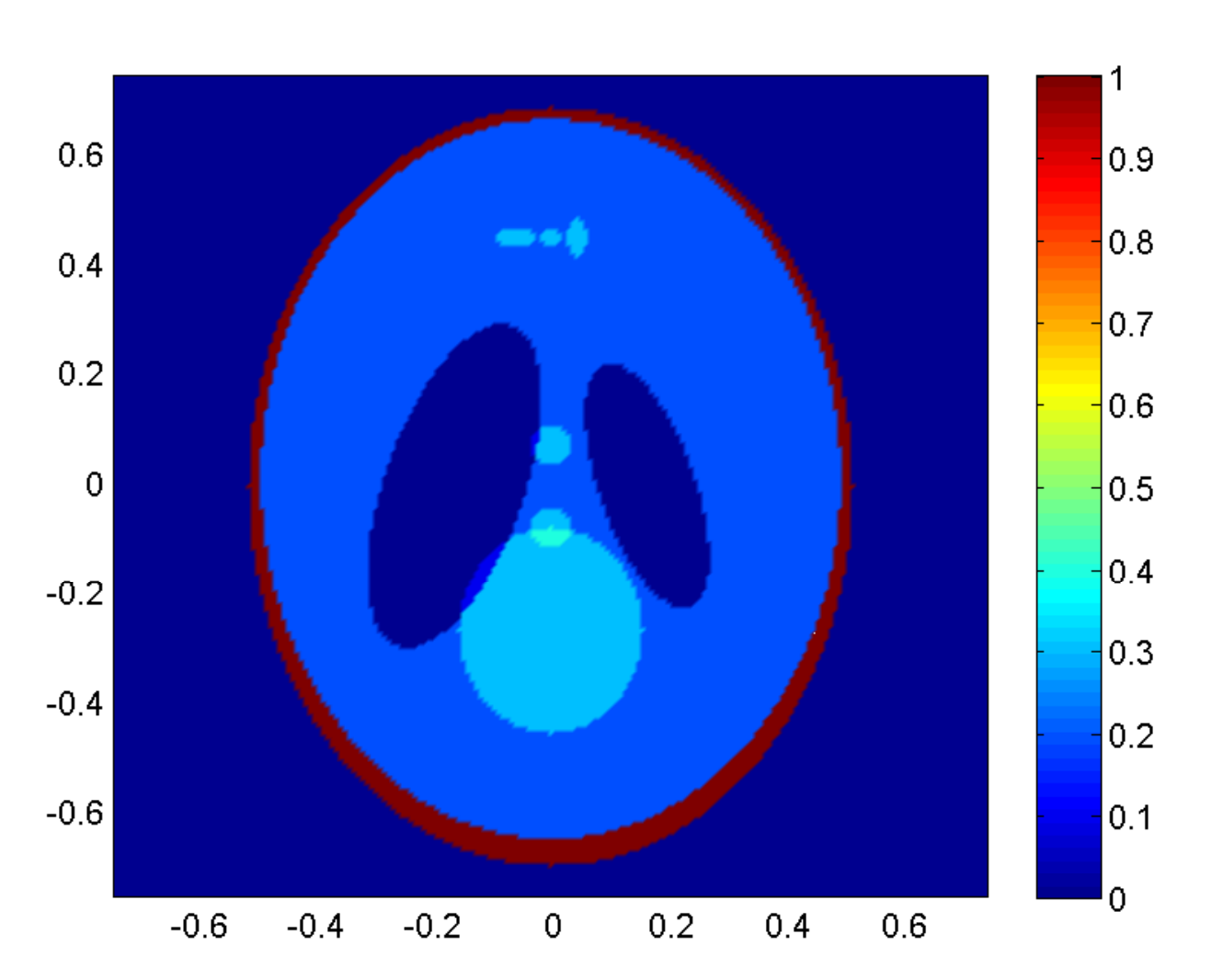}
  \includegraphics[width=0.4\textwidth]{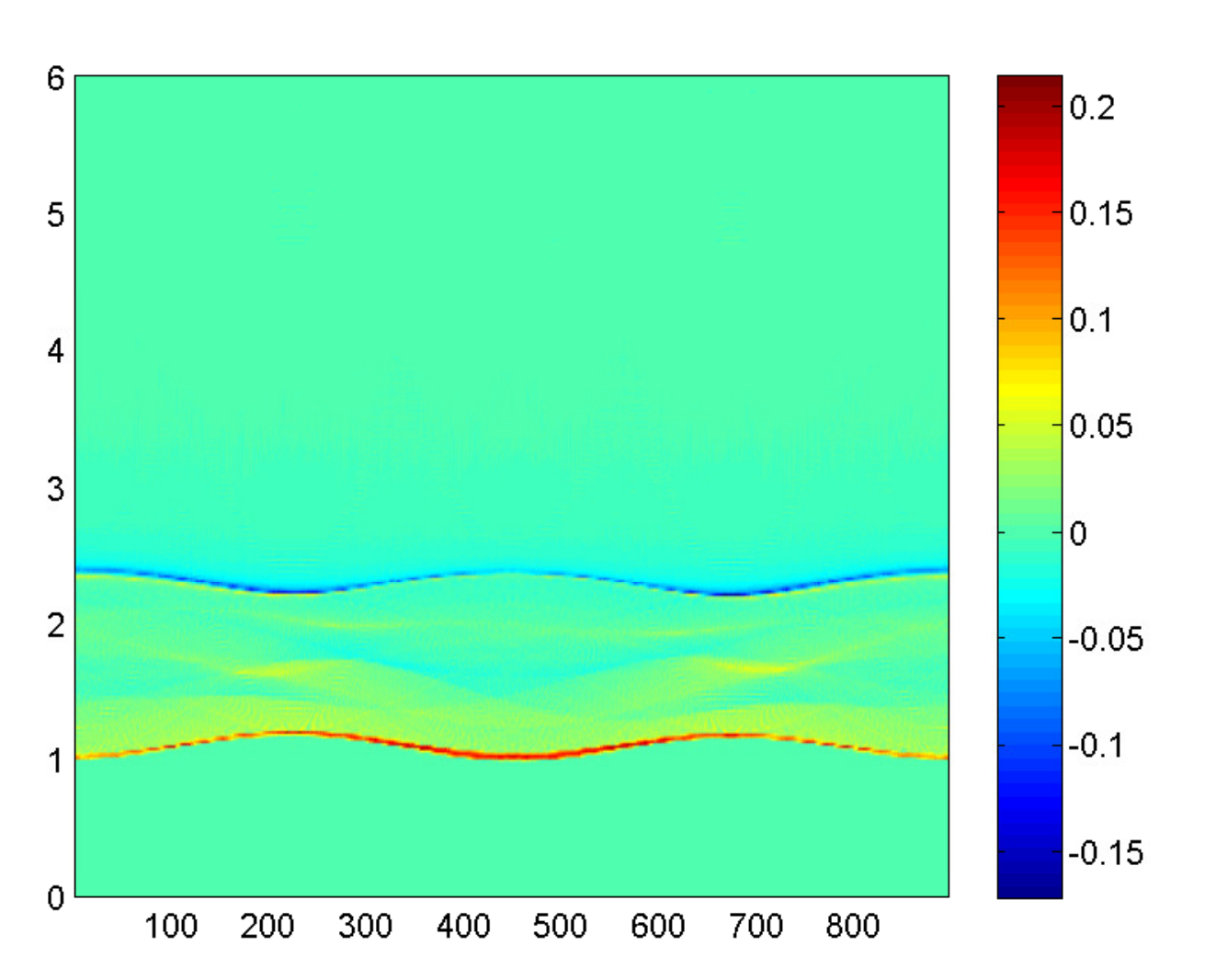}\\
  \includegraphics[width=0.4\textwidth]{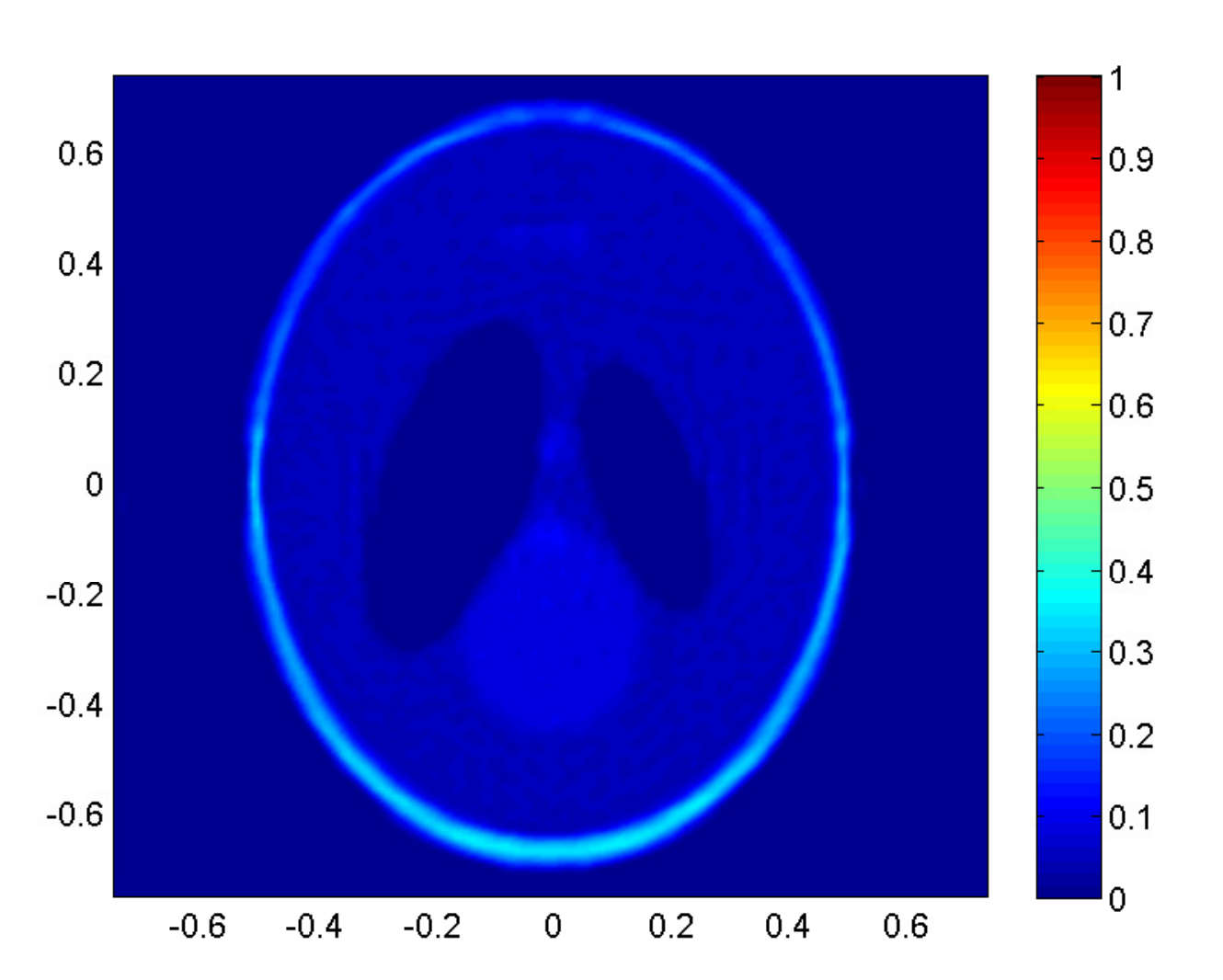}
  \includegraphics[width=0.4\textwidth]{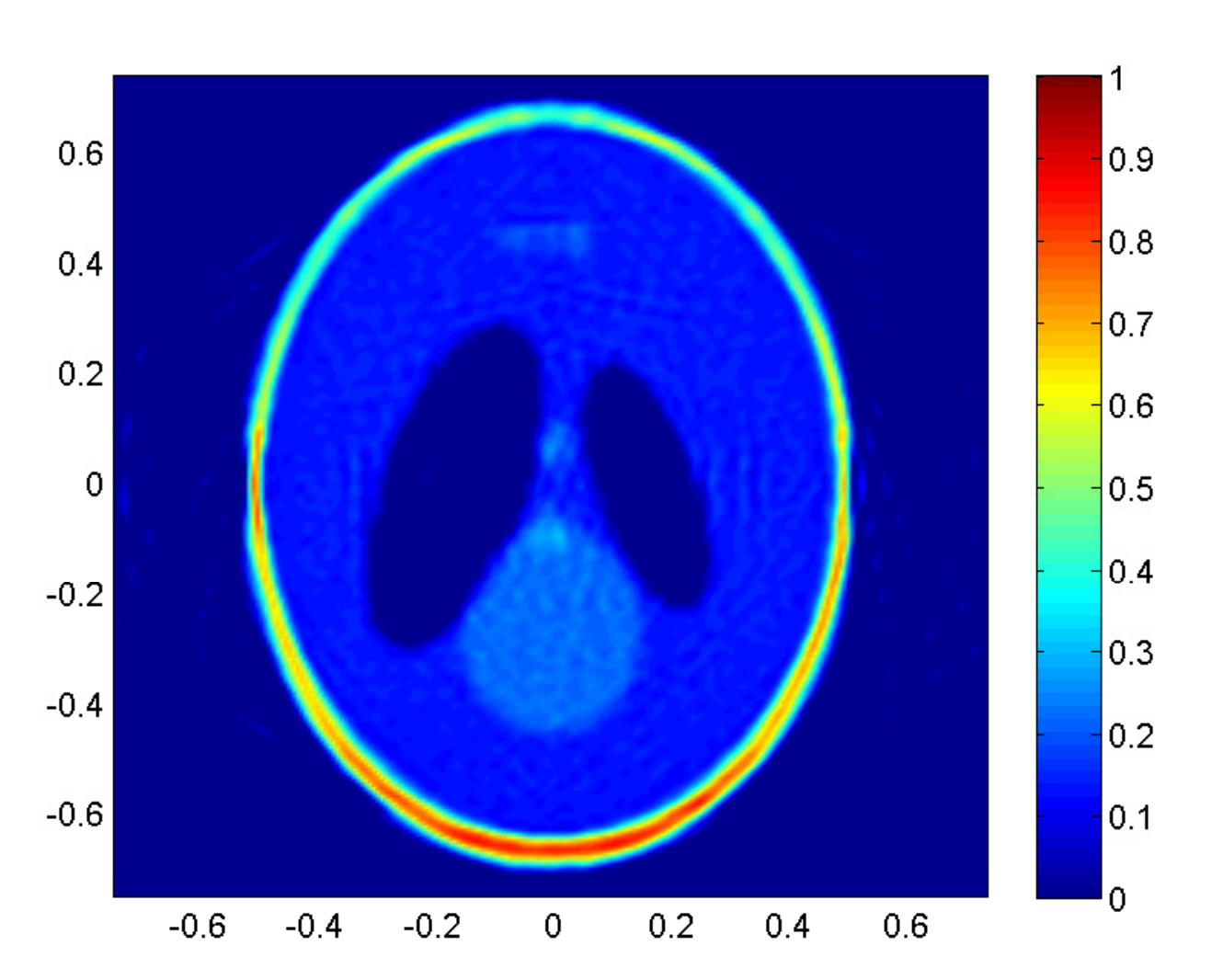}\\
  \includegraphics[width=0.4\textwidth]{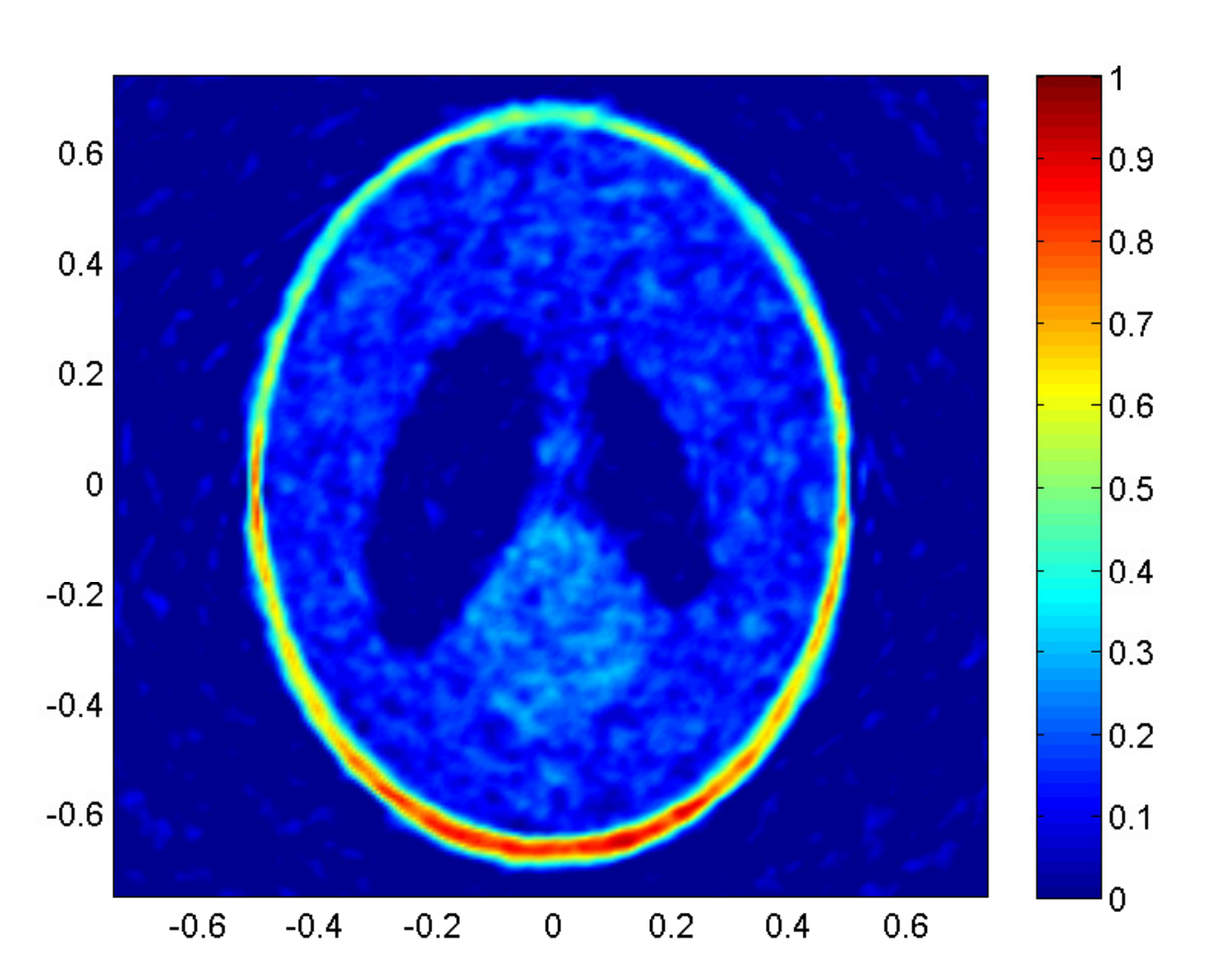}\\
  \includegraphics[width=0.7\textwidth]{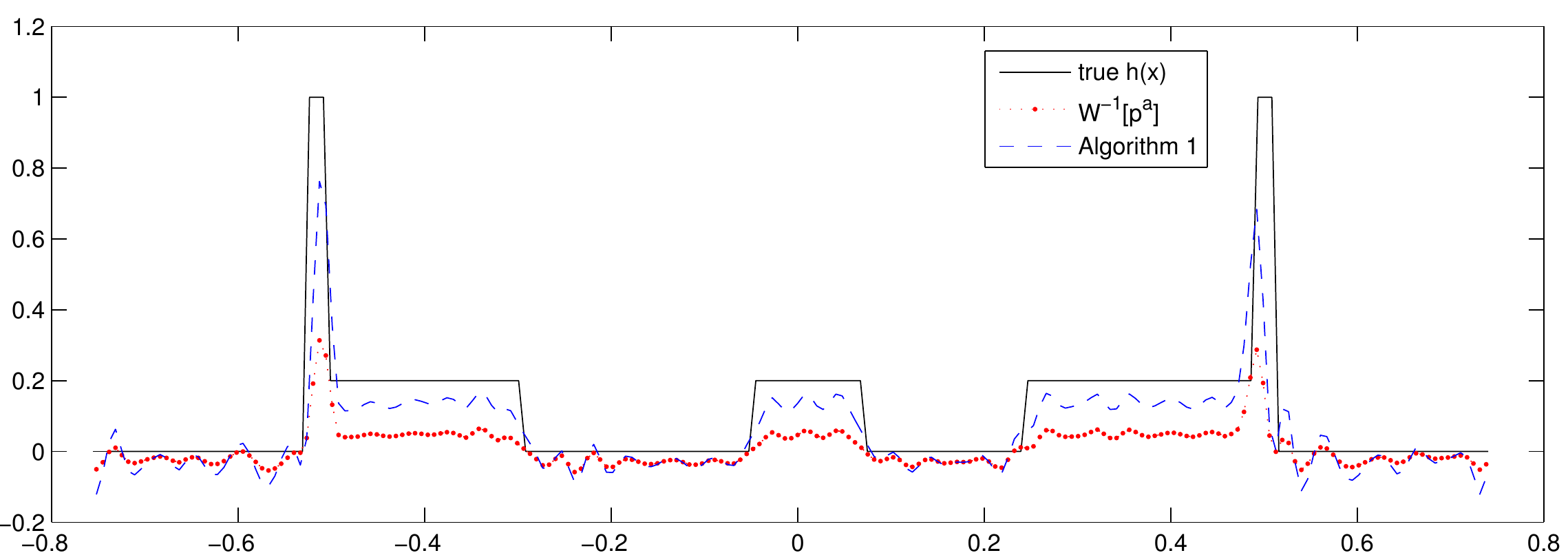}
  \caption{\label{fig:const_circle}Measurements along a circle and constantly attenuating model. Top left: Ground truth.
  Top Right: the simulated pressure data $\pa$. Middle: Reconstruction by universal back-projection
  (not taking into account attenuation), and by \autoref{pseudo-code} with noise-free and $20\%$ noise.
  Bottom: Cross section through ground truth and reconstructions.}
\end{figure}

Next we consider the  Nachman, Smith and Waag (NSW) \cite{NacSmiWaa90} attenuation model:
\begin{equation}\label{eq:NSW}
\kappa(\omega)=\omega \sqrt{\frac{1-\i \omega \tilde{\tau}}{1-\i \omega \tau}} =
\omega+\frac{\tau-\tilde{\tau}}{2\tau\tilde{\tau}}\i+k_*(\omega)
\end{equation}
where $k_*(\omega)=\mathcal{O}(|\omega|^{-1})$.
Therefore, $\kappa$ is a weak attenuation coefficient with $\kinf=\frac{\tau-\tilde{\tau}}{2\tau\tilde{\tau}}$.
In \autoref{fig:circular2} we present ground truth, simulated measurements $\pa$, and compare three imaging
techniques:
\begin{itemize}
\item Applying the universal back-projection formula $\Bp{\pa}$ \autoref{eq:UniversalBP-2D} (thus neglecting the attenuation).
\item The compensated back-projection formula
      \begin{equation}\label{eq:RescaledBp}
       \Bp{(t, \vxi)\mapsto \partial_t \left( e^{\kinf t}\qa(t, \vxi)\right)},
      \end{equation}
      which neglects $k_*(\omega)$ but takes into account $\kinf$.
\item Reconstruction using \autoref{eq:explicit} with the numerical code described in \autoref{pseudo-code}.
\end{itemize}
The parameters of attenuation coefficient in the NSW model are $\tilde{\tau}=0.1$ and $\tau=0.11$. For small frequencies the NSW
coefficients behaves like a power law of order $2$. However, asymptotically, for large frequencies,
it behaves like $\omega + \frac{\tau -\tilde{\tau}}{2 \tau \tilde{\tau}} \i$. The NSW-coefficient has been plotted
in \autoref{fig:NSWImkappa}.
In order to demonstrate the stability of the algorithm, we also performed reconstructions from noisy data,
where a uniformly distributed noise is added with a variance of $20\%$ of the maximal intensity.
The reconstruction results are depicted in the last image of \autoref{fig:const_circle} and \autoref{fig:circular2},
respectively.

\begin{figure}[h]
  \centering
  \includegraphics[width=0.4\textwidth]{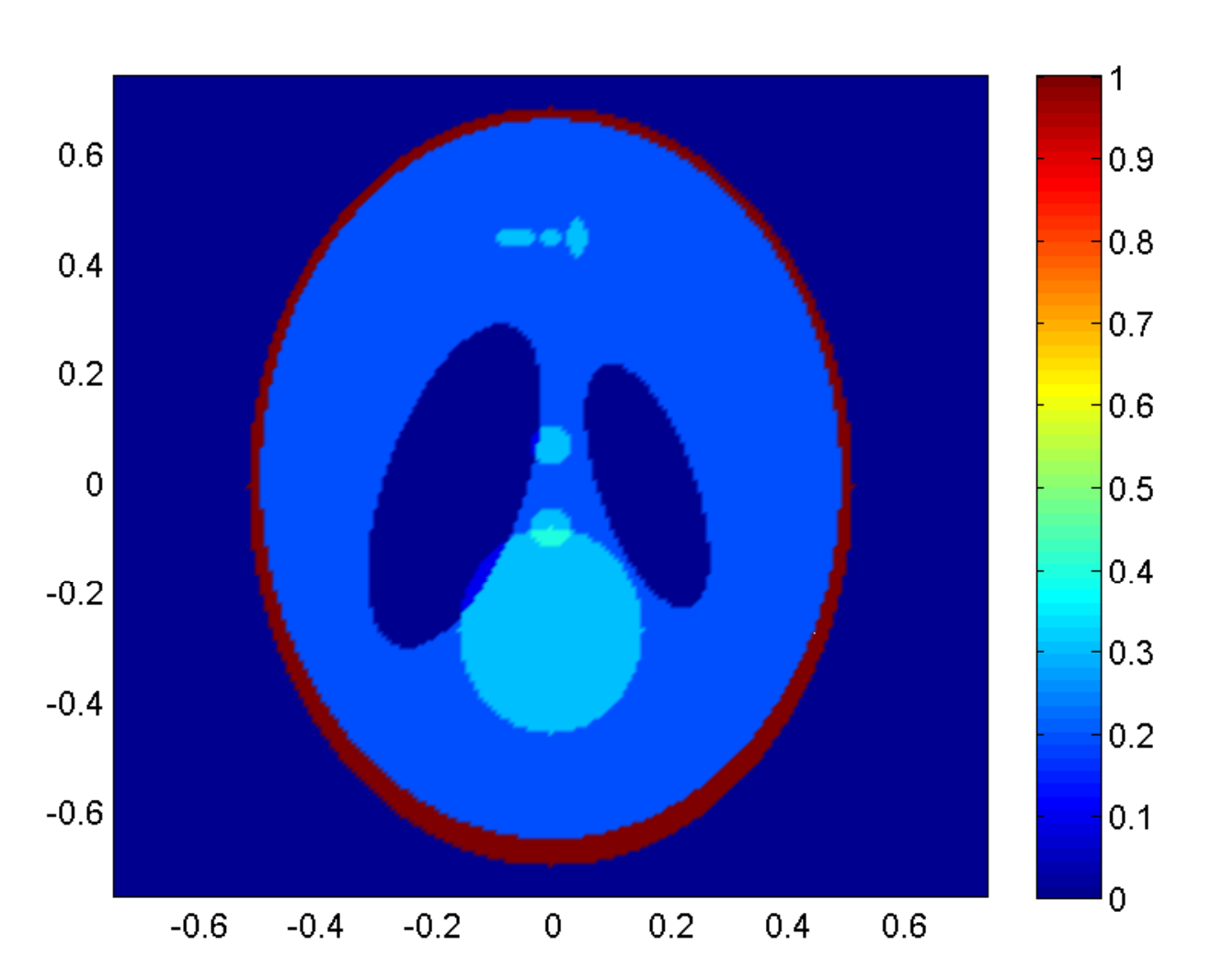}
  \includegraphics[width=0.4\textwidth]{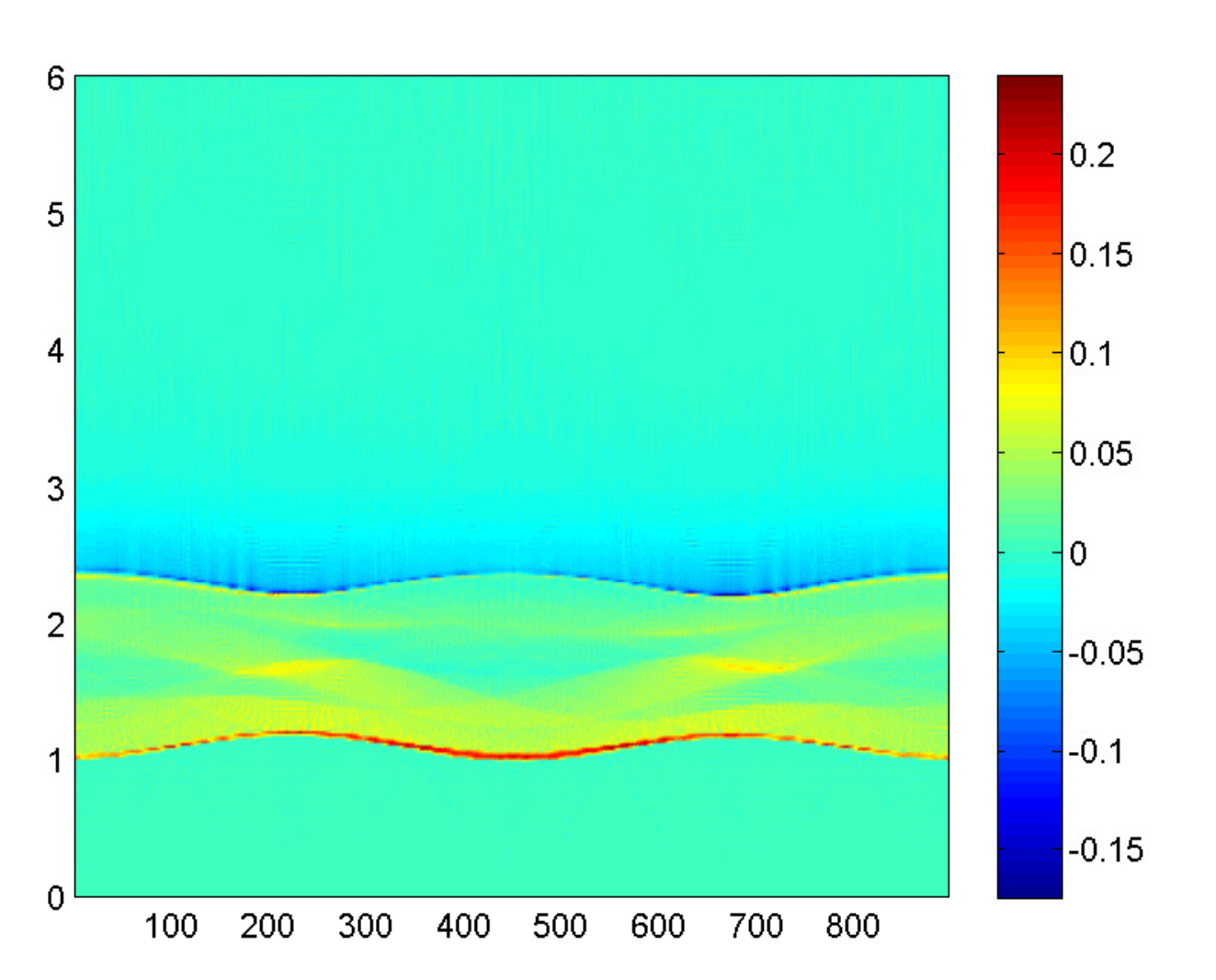}\\
  \includegraphics[width=0.4\textwidth]{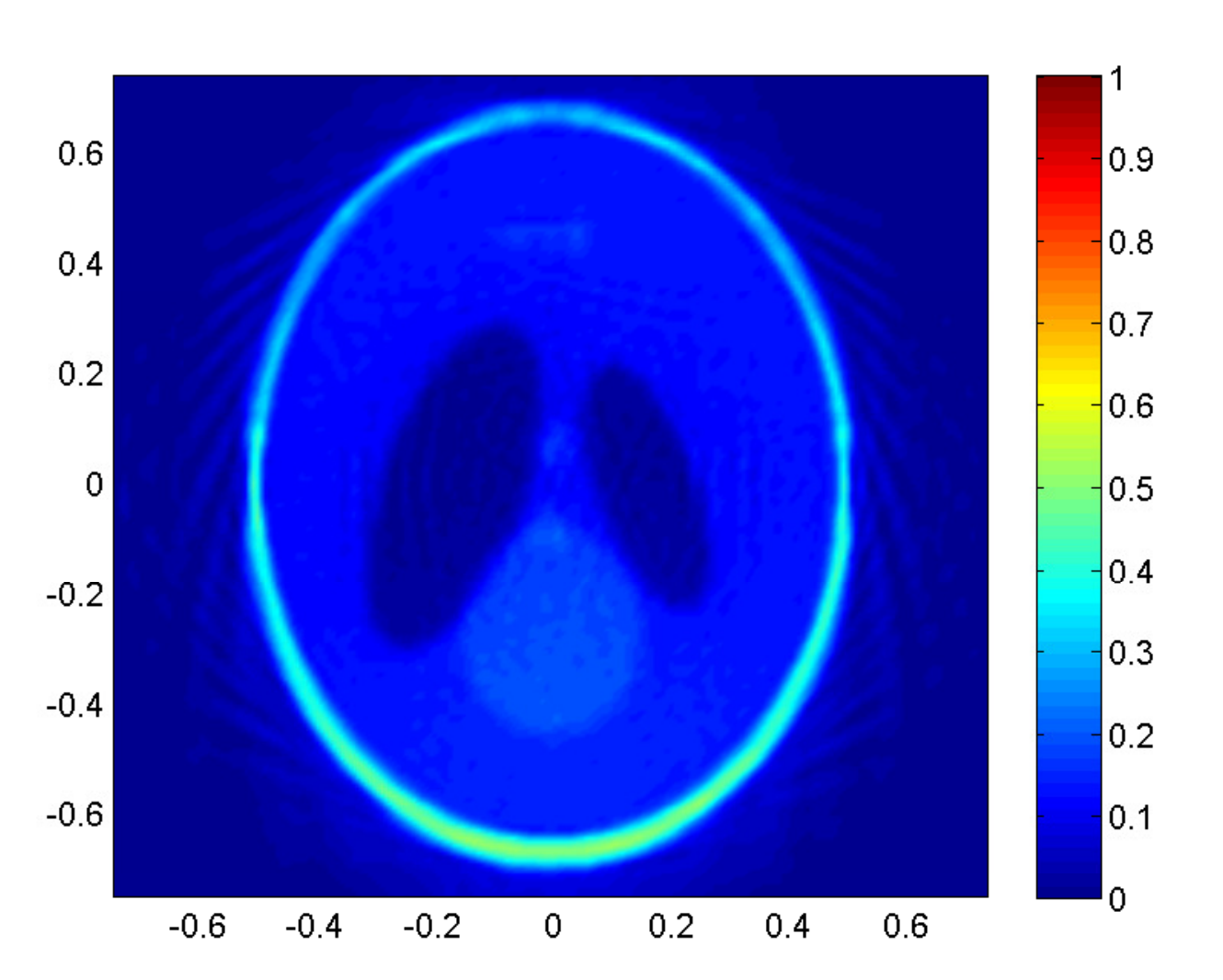}
  \includegraphics[width=0.4\textwidth]{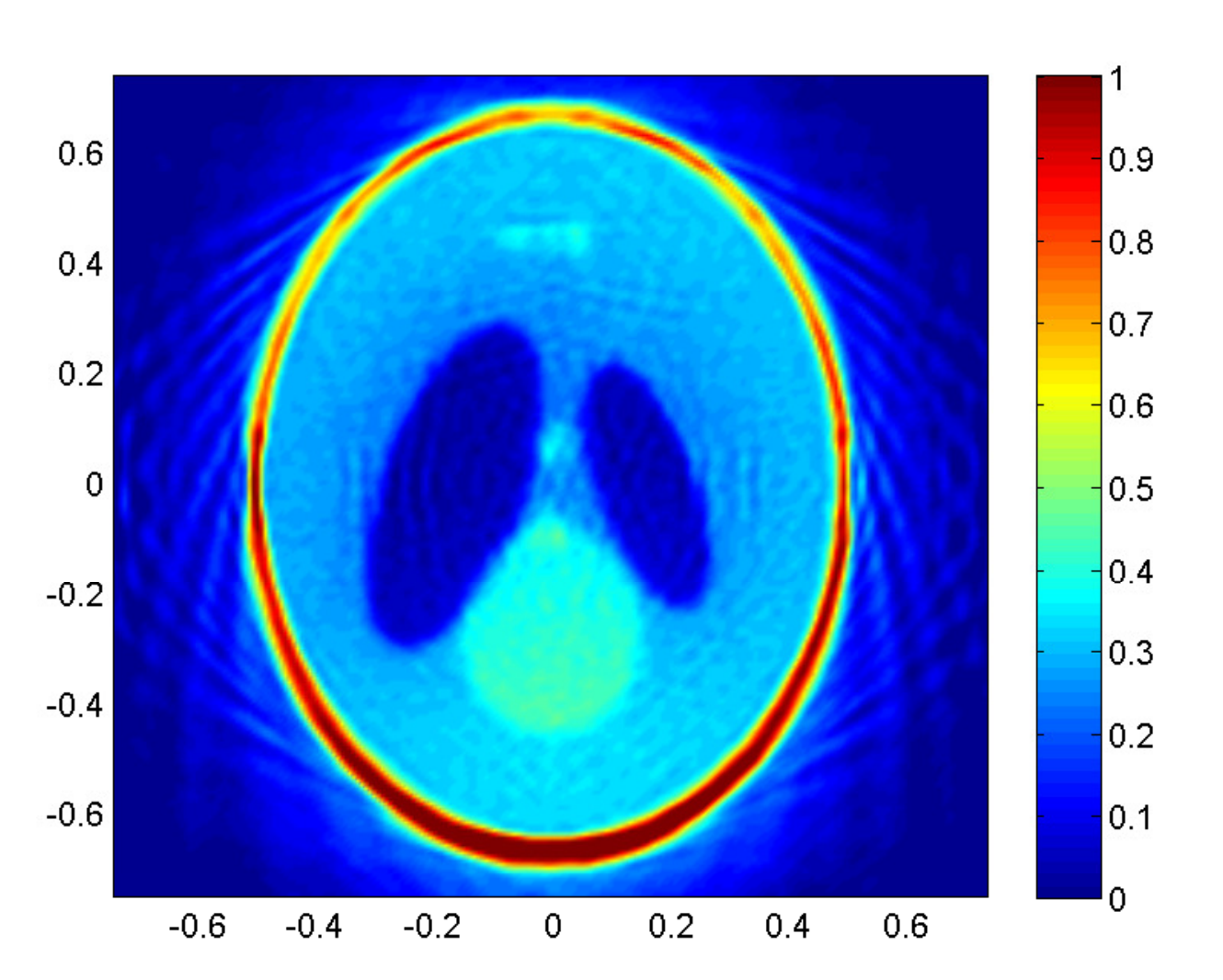}\\
  \includegraphics[width=0.4\textwidth]{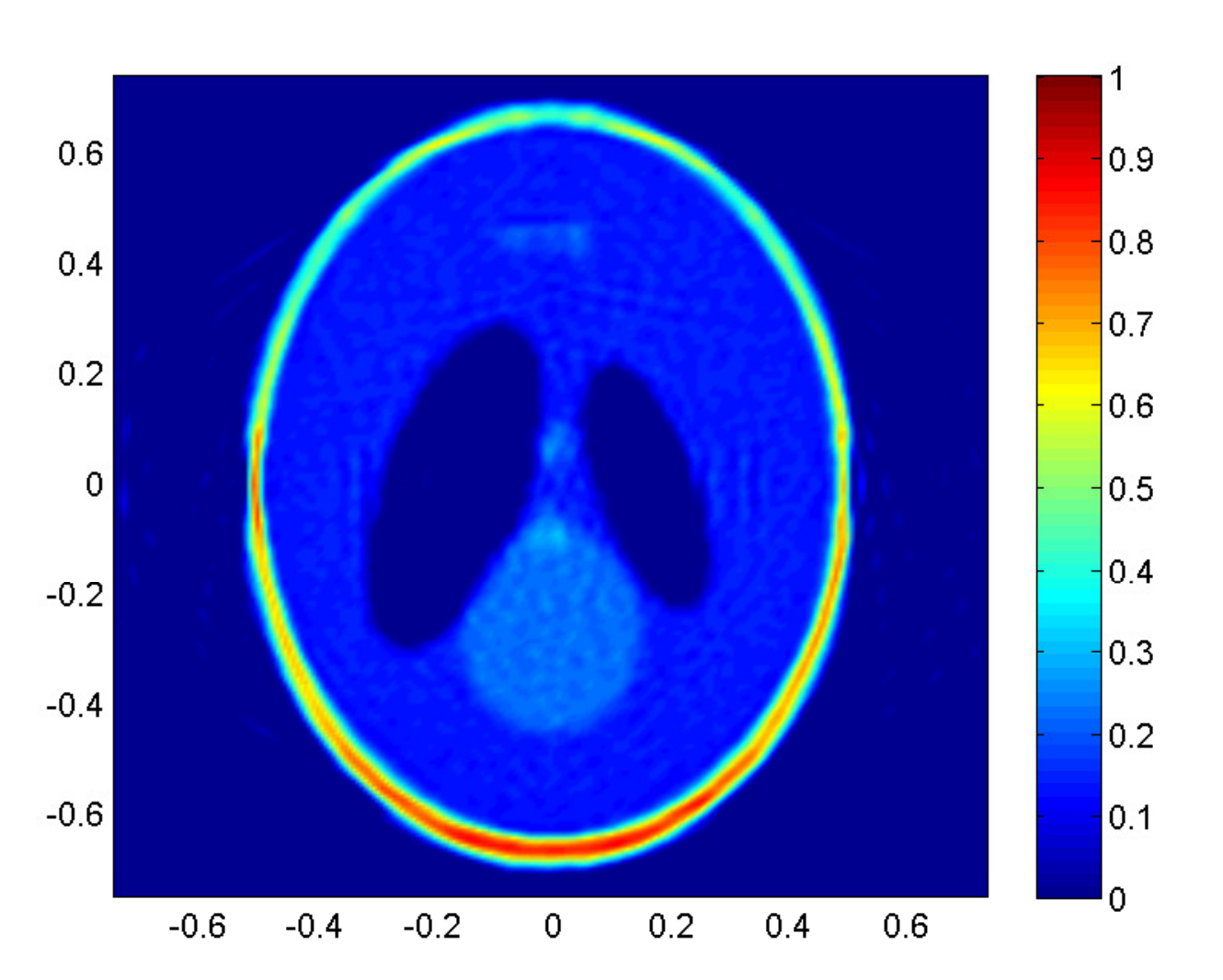}
  \includegraphics[width=0.4\textwidth]{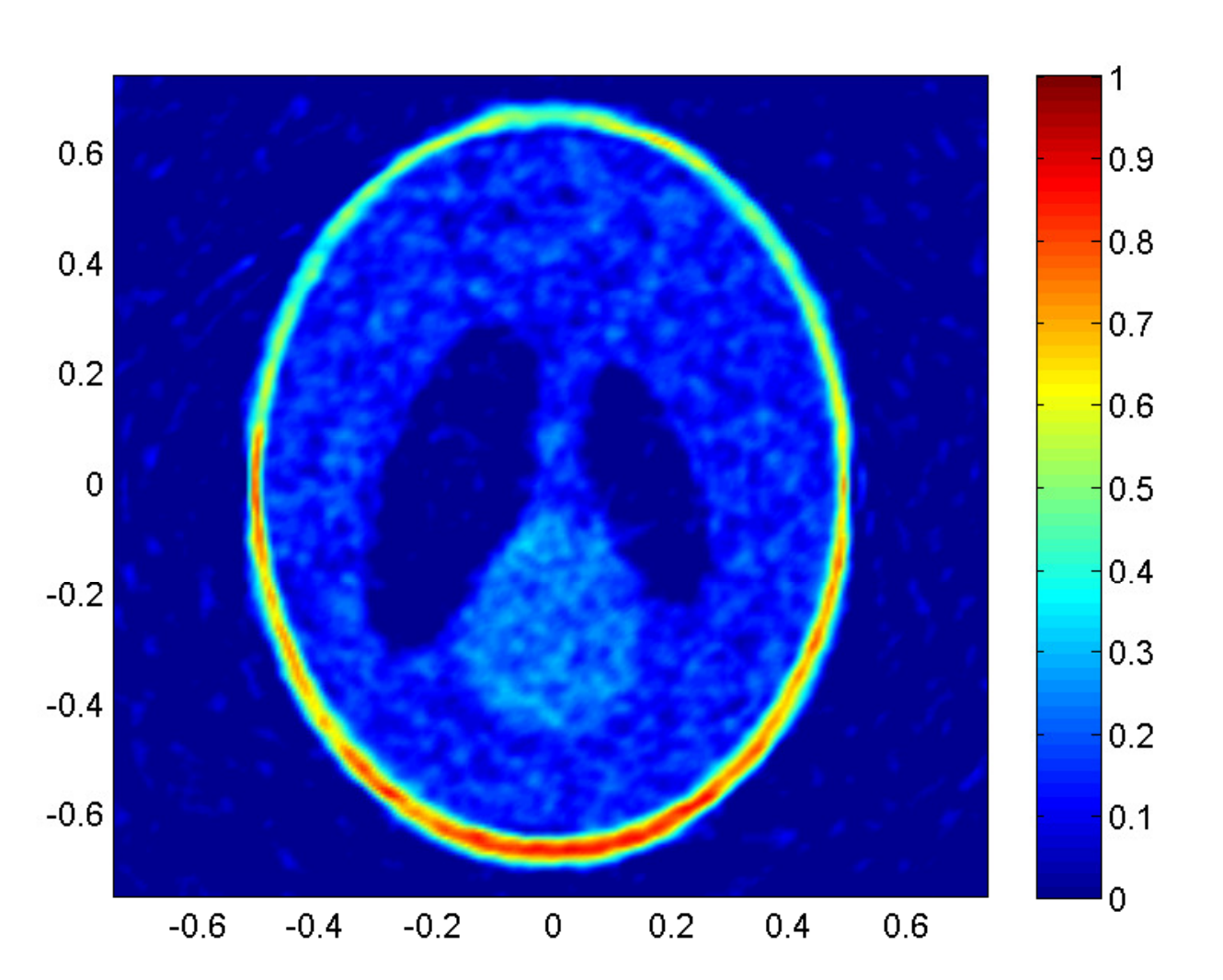}\\
  \includegraphics[width=0.7\textwidth]{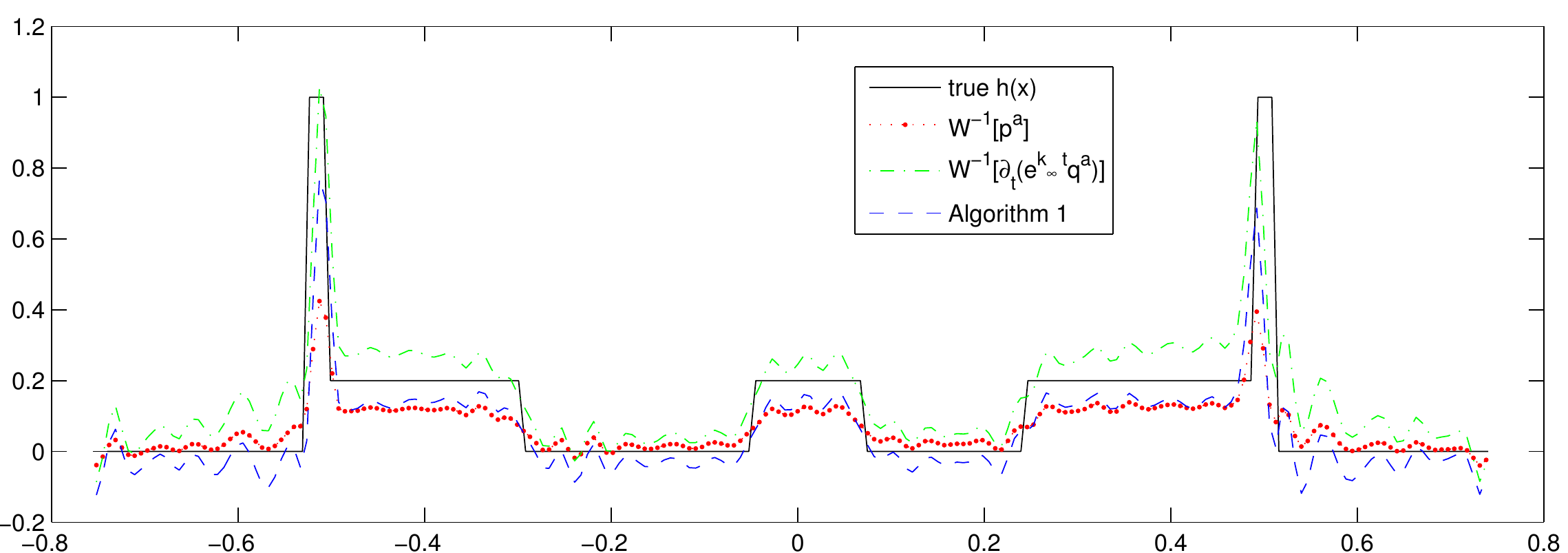}
  \caption{\label{fig:circular2}Measurements along a circle and NSW model. Top left: Ground truth.
  Top Right: the simulated pressure data $\pa$. Middle: Reconstruction by universal back-projection
  (not taking into account attenuation), by compensated attenuation \autoref{eq:RescaledBp} and by \autoref{pseudo-code} with noise-free and $20\%$ noise.
  Bottom: Cross section through ground truth and reconstructions.}
\end{figure}

\clearpage

\subsection*{Measurements on a line}
The measurement points are $N=849$ uniformly distributed on a line segment with length $l=10.2$.
The distance of the line to the center of the phantom is $1.7$. The time length is $8$ and thus $\Delta_T = 8/443$.

We consider a constantly attenuating medium, with attenuation coefficient $\kappa(\omega) =\omega+ \i \kinf$.
\autoref{fig:line1} shows the ground truth (top left) and the simulated pressure data $\pa$ on $\Gamma$ over time.
Two reconstructions are presented: The first one is obtained by applying the universal back-projection formula
\autoref{eq:UniversalBP-2D}
(middle left) and the middle right image shows the reconstruction obtained with \autoref{pseudo-code}. The quantitative
values of ground truth and the two reconstructions are plotted on the bottom.

\begin{figure}[h]
  \centering
  \includegraphics[width=0.4\textwidth]{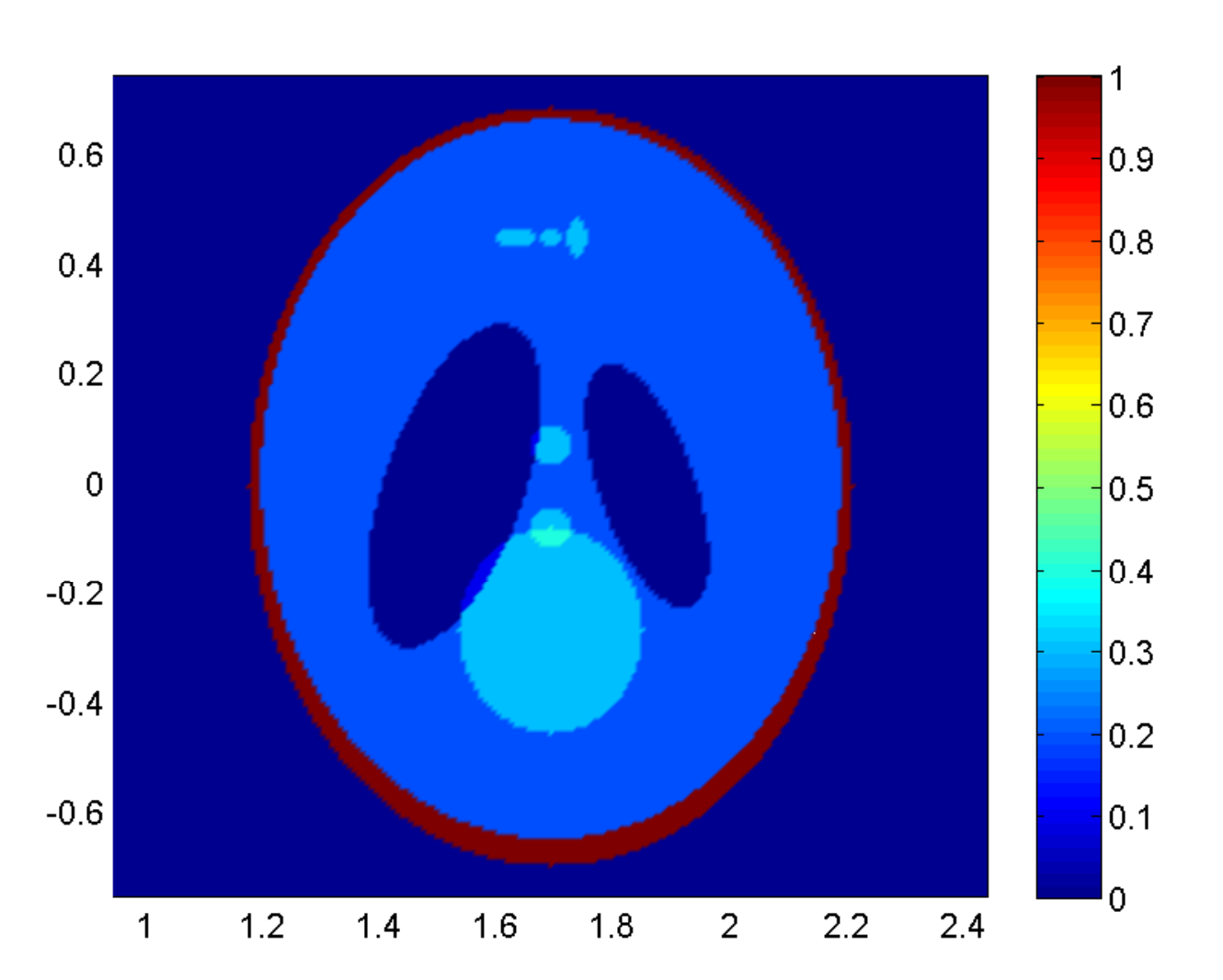}
  \includegraphics[width=0.4\textwidth]{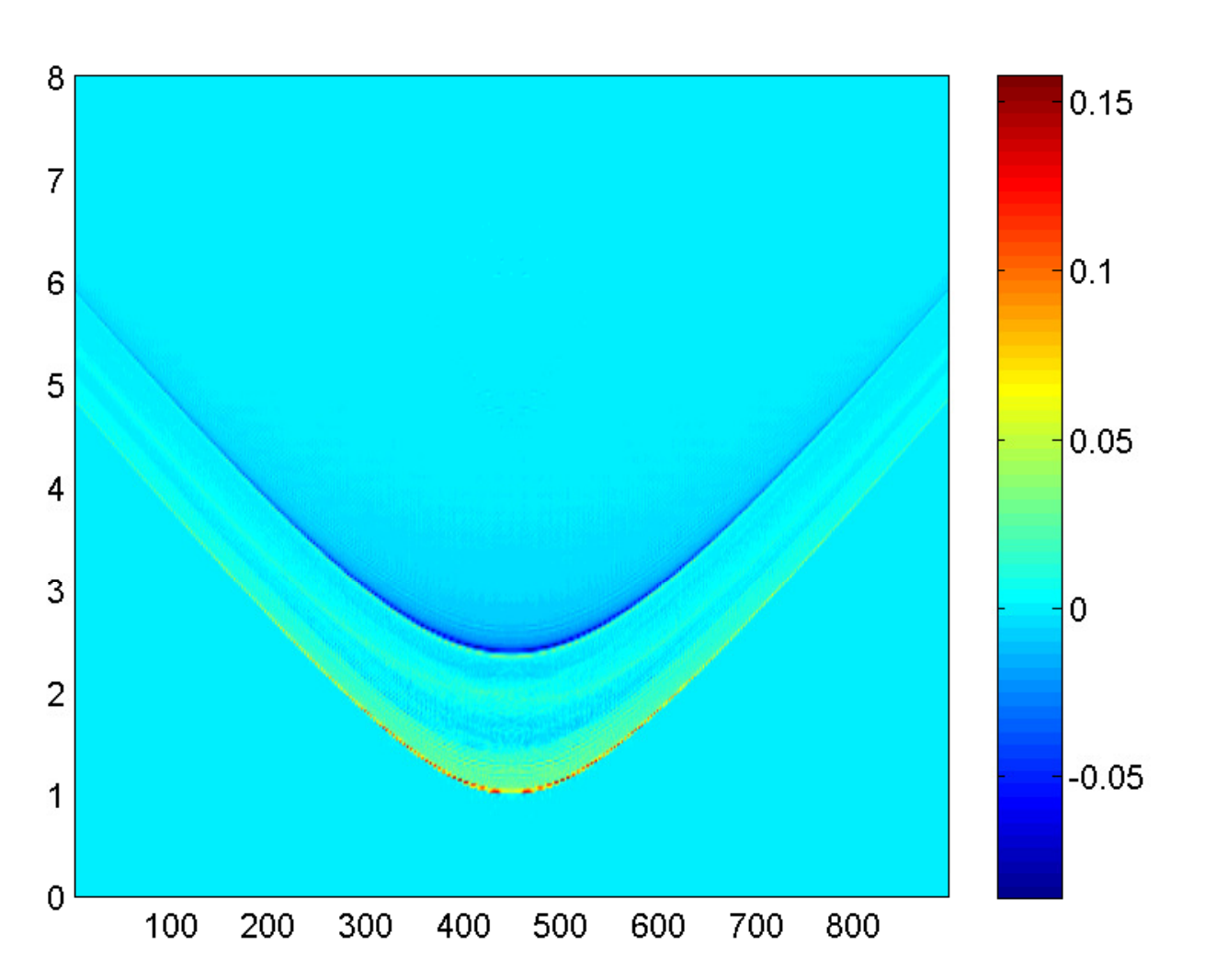}\\
  \includegraphics[width=0.4\textwidth]{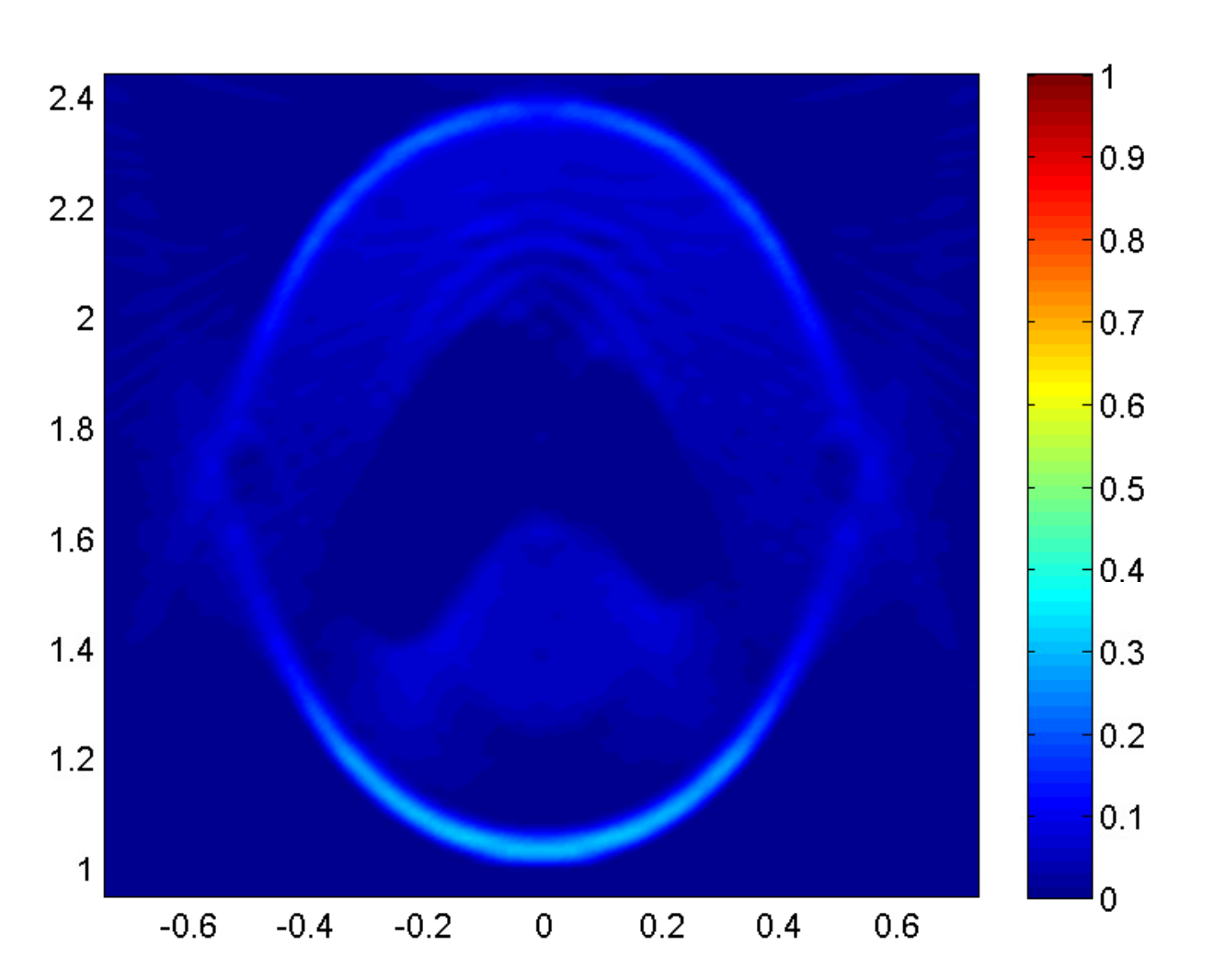}
  \includegraphics[width=0.4\textwidth]{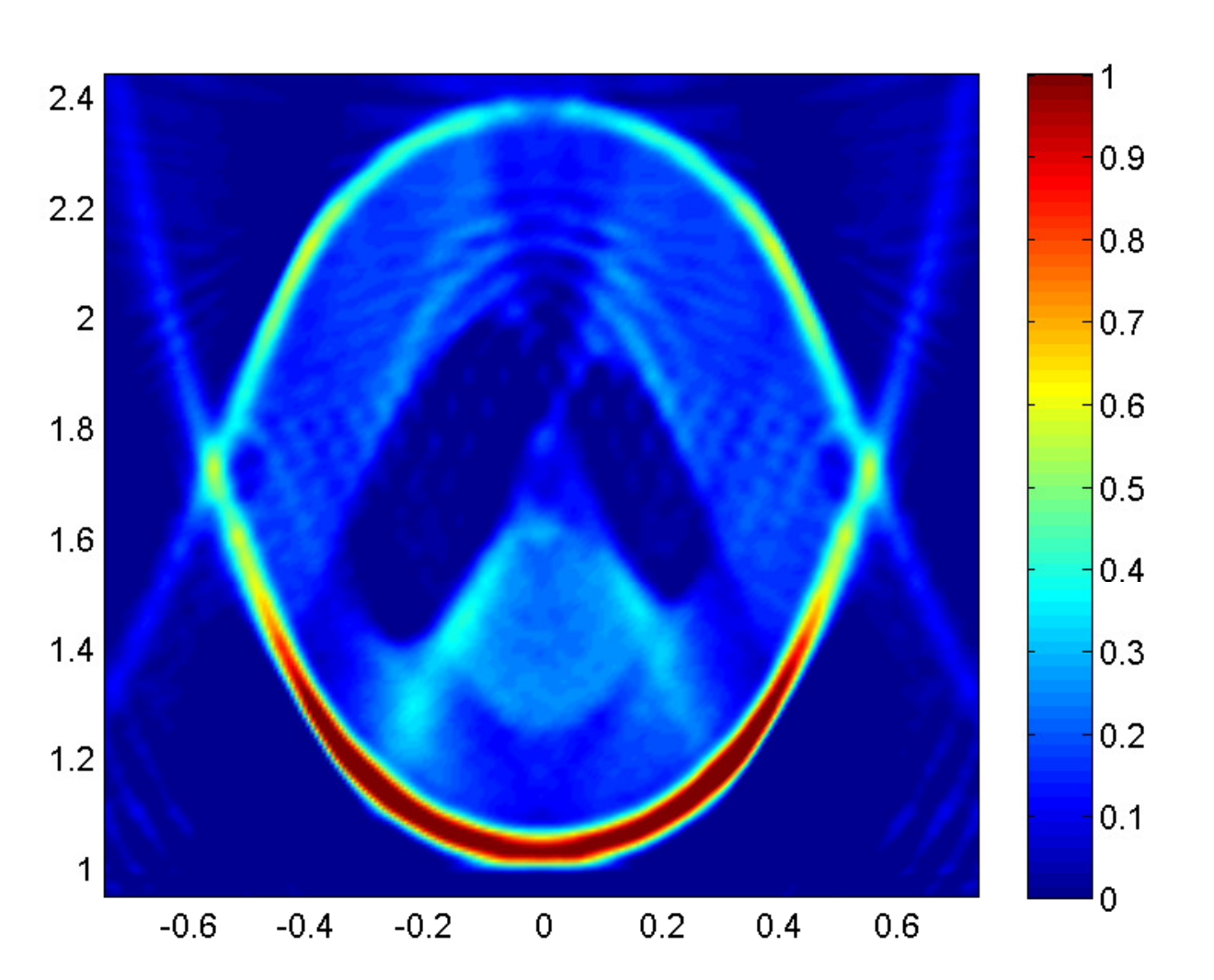}\\
  \includegraphics[width=0.4\textwidth]{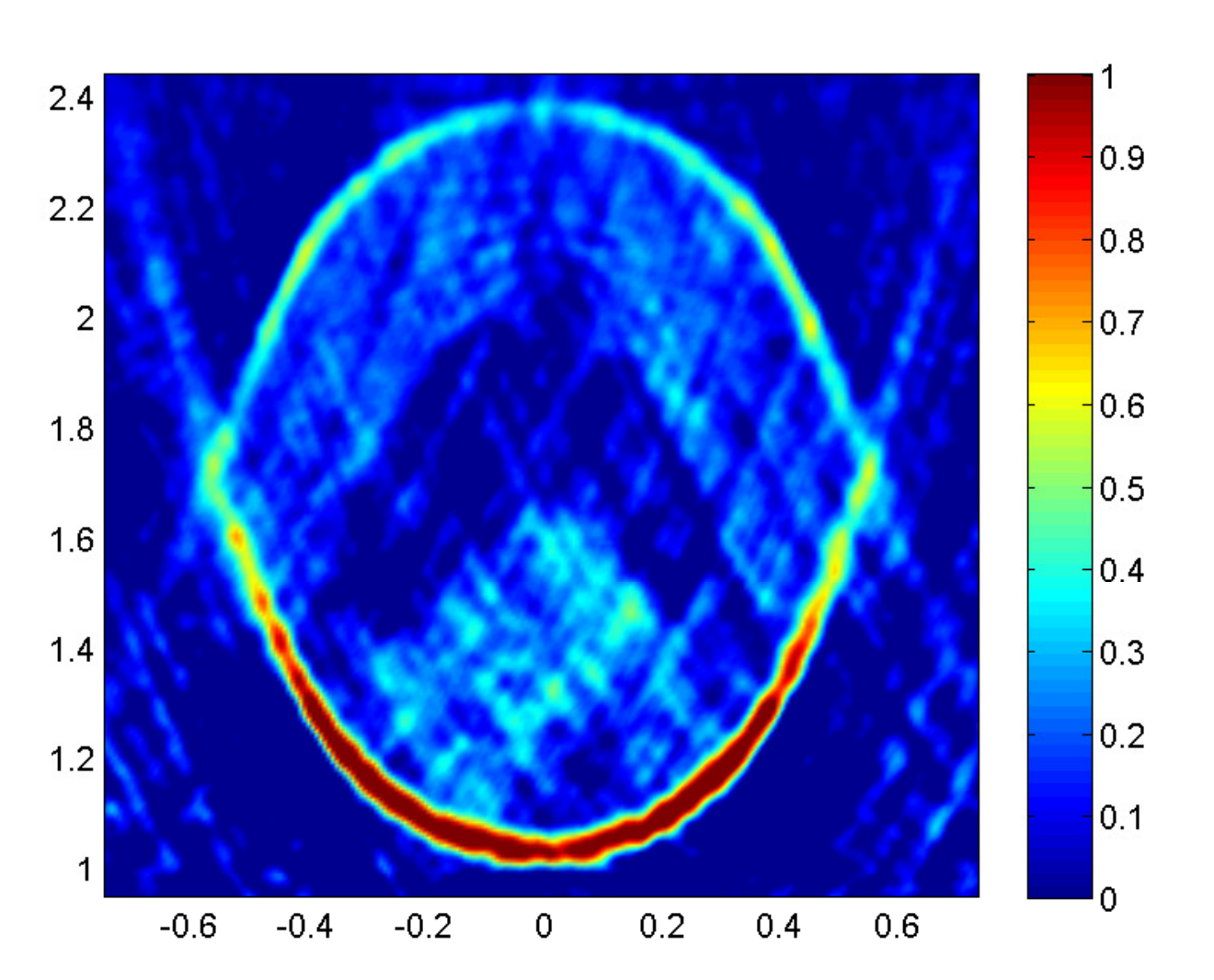}\\
  \includegraphics[width=0.7\textwidth]{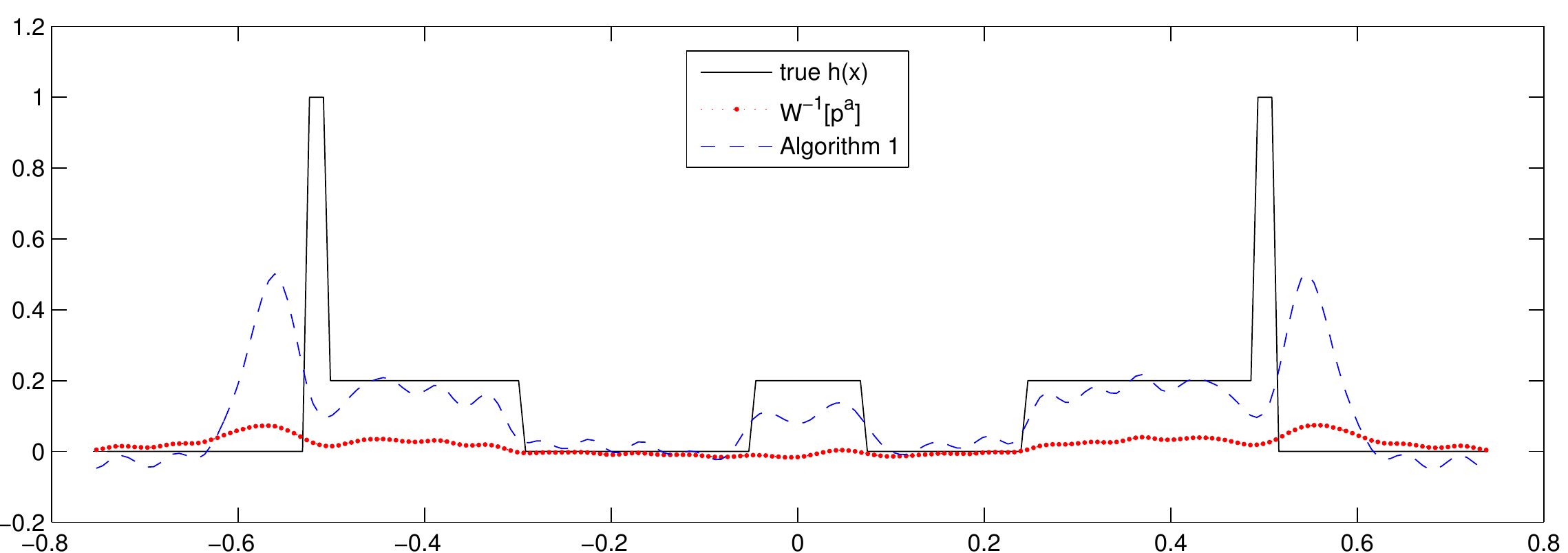}
  \caption{\label{fig:line1}Measurements along a line and constantly attenuating model. Top left: Ground truth.
  Top Right: the simulated pressure data $\pa$. Middle: Reconstruction by universal back-projection
  (not taking into account attenuation), and by \autoref{pseudo-code} with noise-free and $20\%$ noise.
  Bottom: Cross section through ground truth and reconstructions.}
\end{figure}

In \autoref{fig:line2} we present ground truth, simulated measurements $\pa$ using NSW model, and compare three imaging
techniques, the universal back-projection formula neglecting attenuation, the compensated back-projection formula
\autoref{eq:RescaledBp}, which neglects $k_*(\omega)$ but takes into account $\kinf$, and reconstruction with
\autoref{pseudo-code}. The parameters of the NSW attenuation coefficient are again $\tilde{\tau}=0.1$ and $\tau=0.11$.
The reconstruction results from noisy data are depicted in the last image of \autoref{fig:line1} and \autoref{fig:line2},
where uniformly distributed noise is added with a variance of $20\%$ of the maximal intensity value.
Numerical results show that the algorithm is quite stable even with $20\%$ noise.

\begin{figure}
  \centering
  \includegraphics[width=0.4\textwidth]{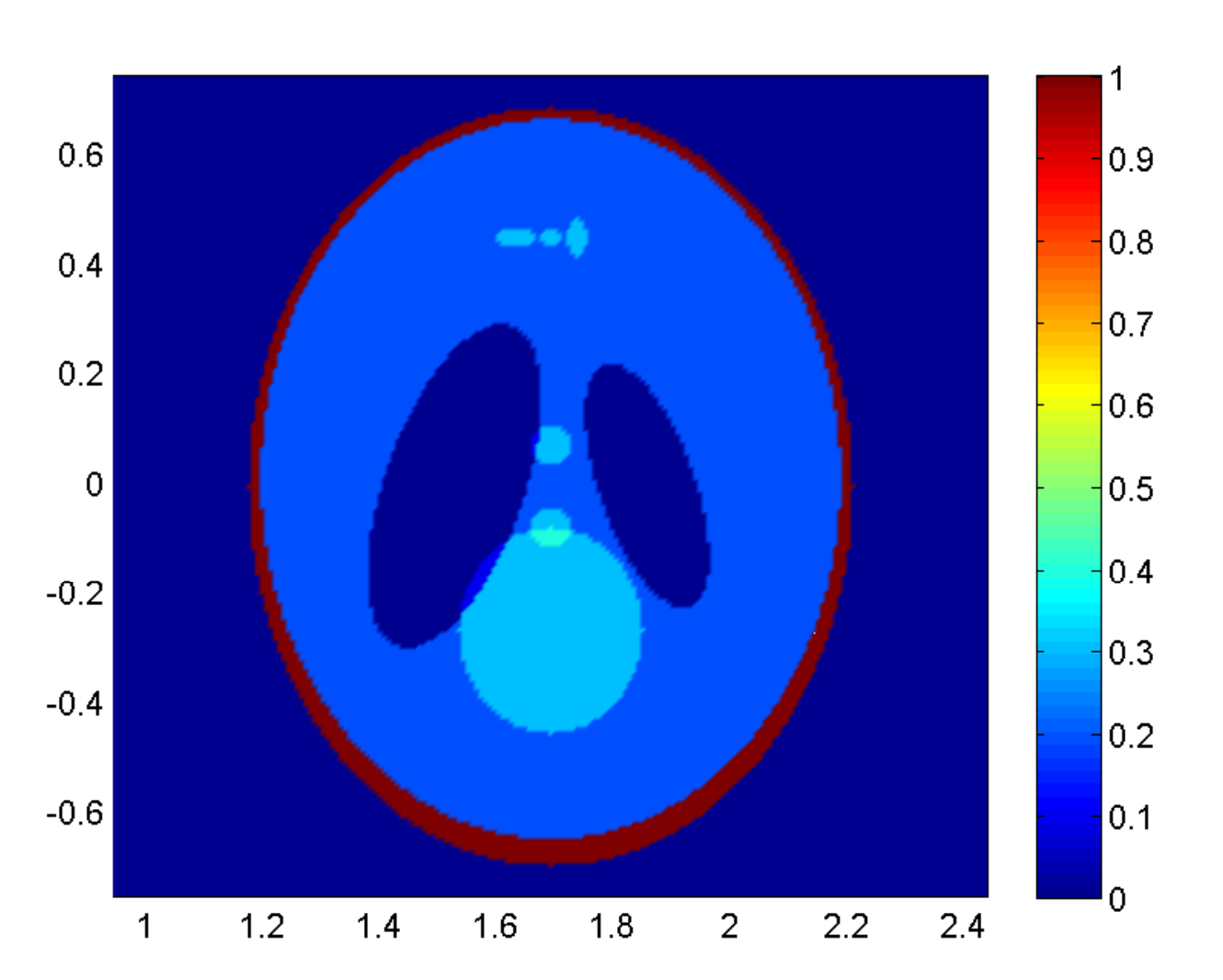}
  \includegraphics[width=0.4\textwidth]{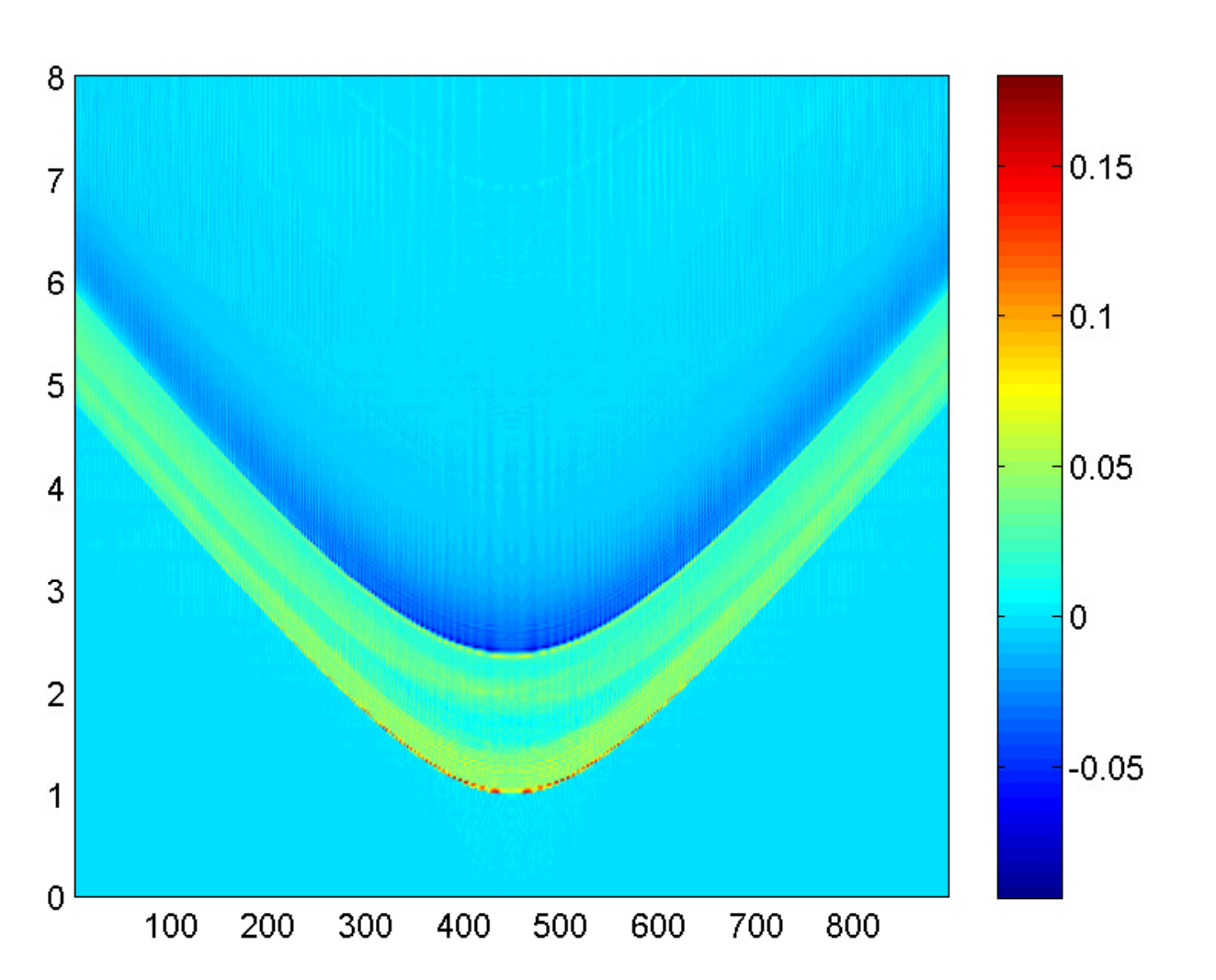}\\
  \includegraphics[width=0.4\textwidth]{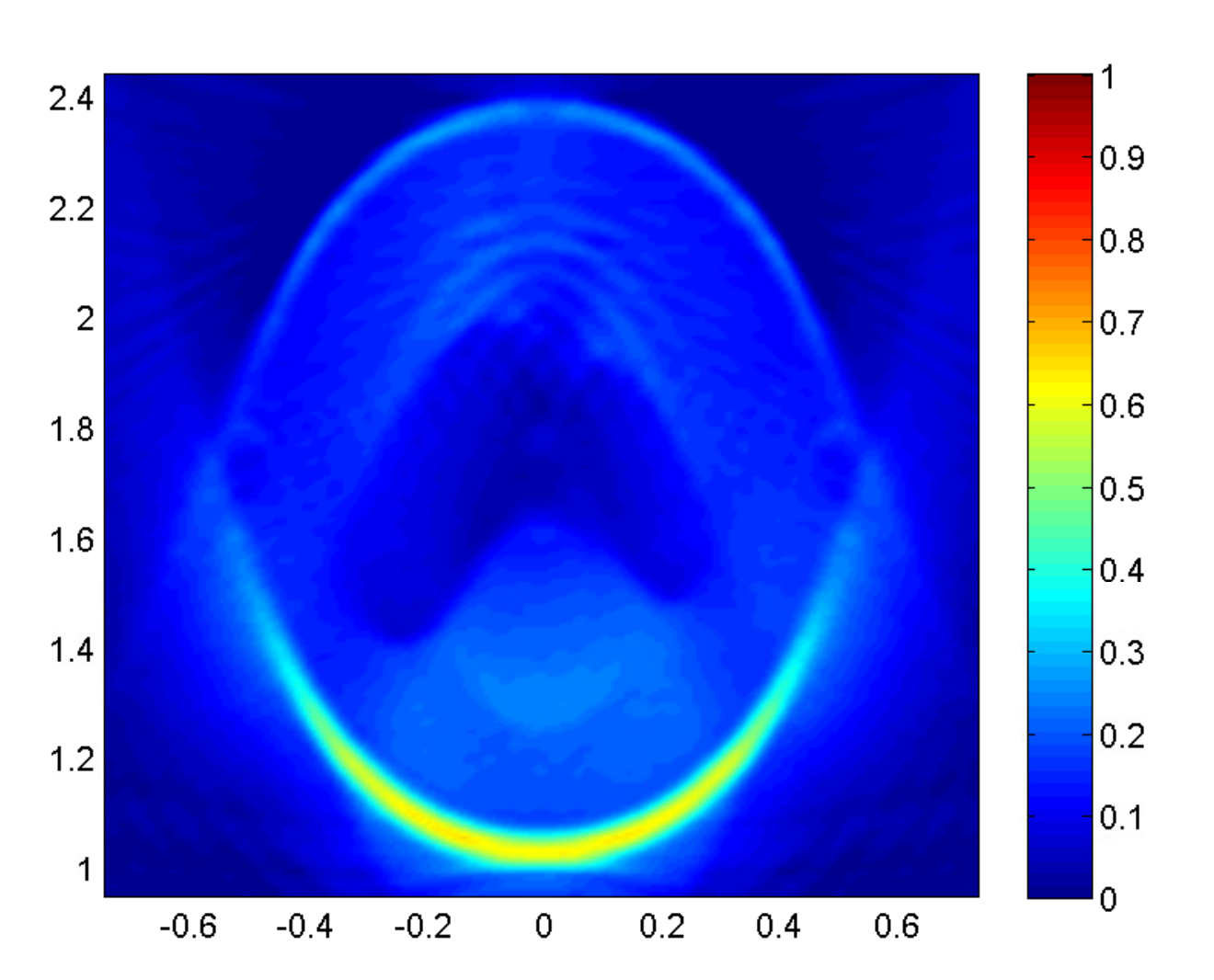}
  \includegraphics[width=0.4\textwidth]{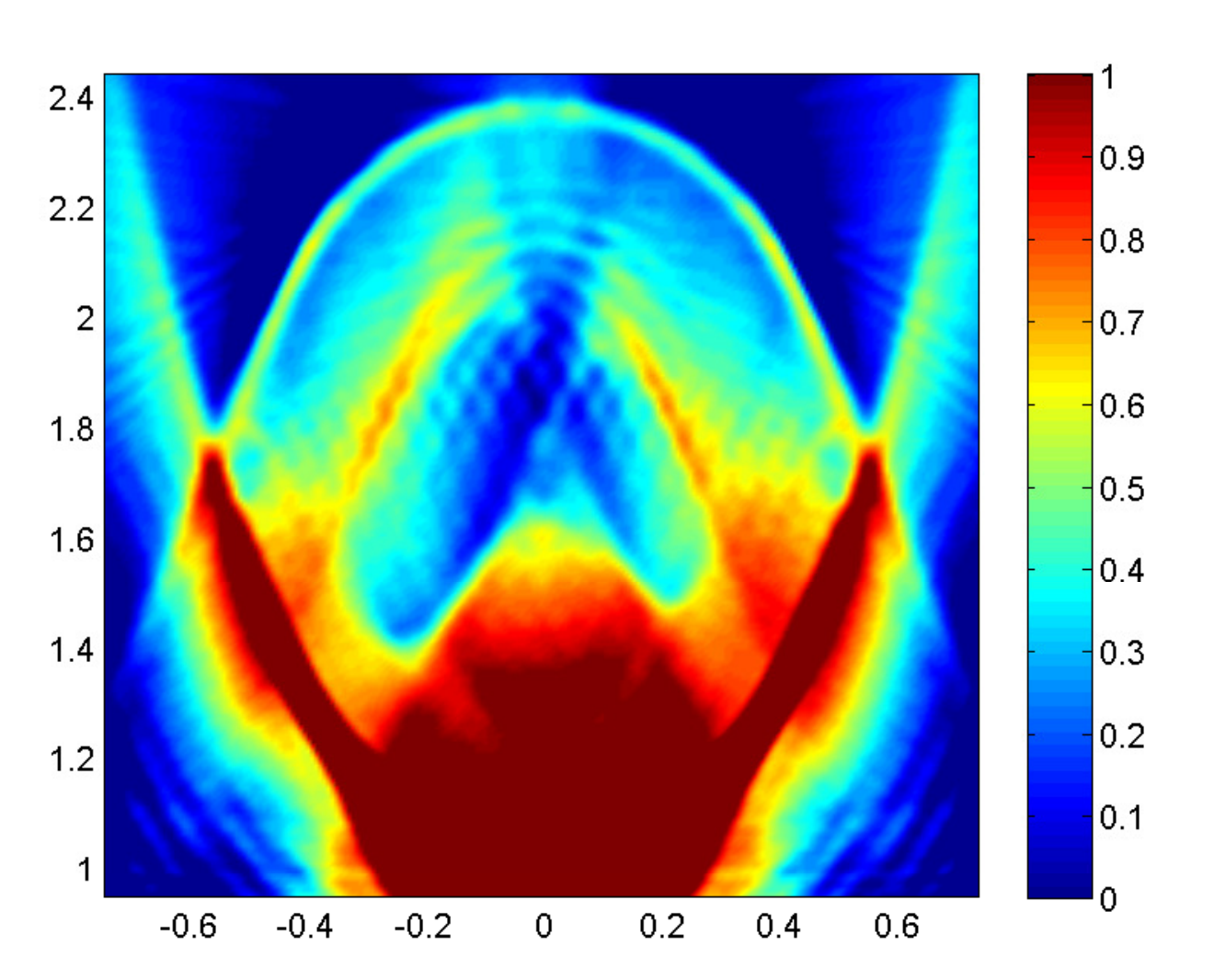}\\
  \includegraphics[width=0.4\textwidth]{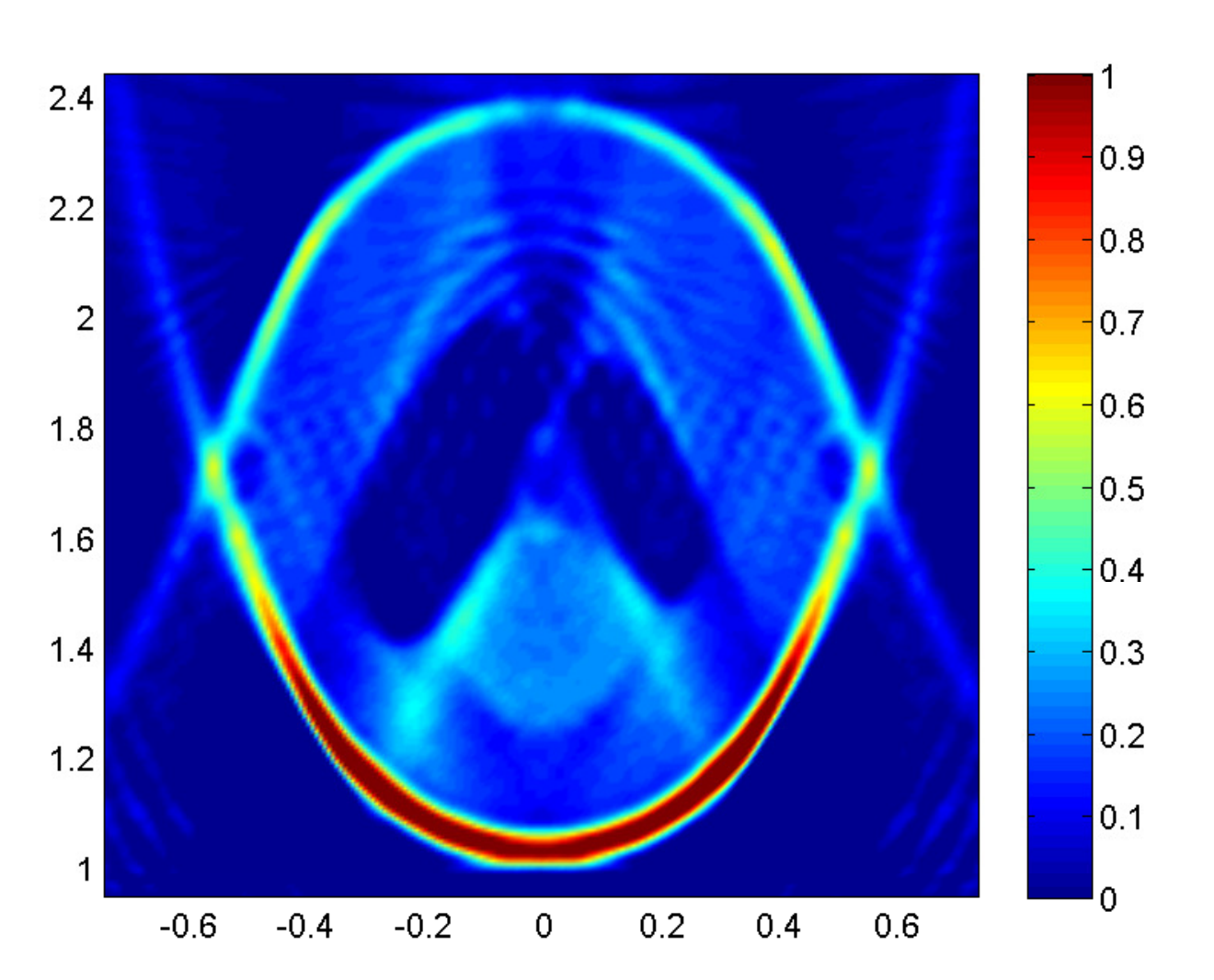}
  \includegraphics[width=0.4\textwidth]{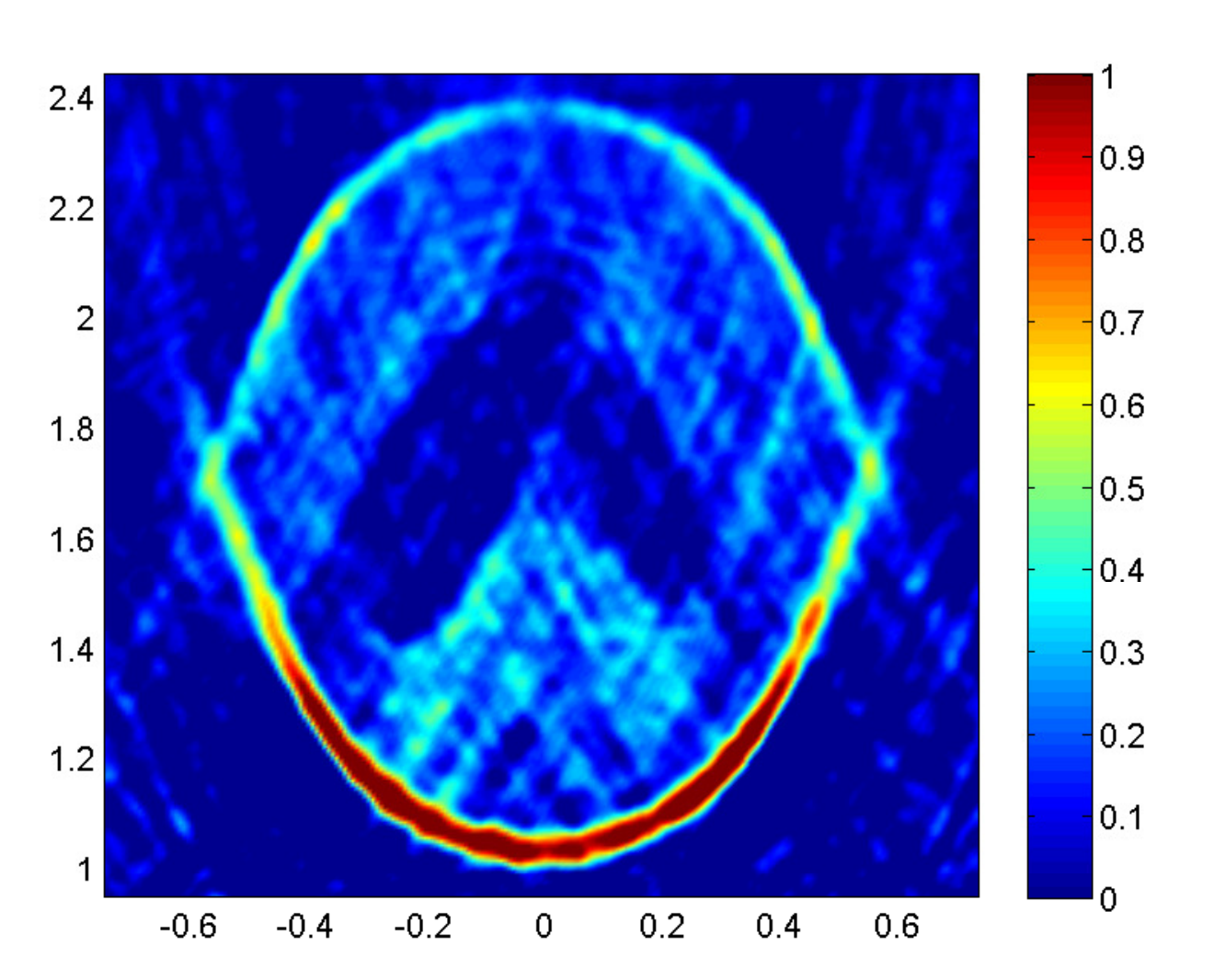}\\
  \includegraphics[width=0.7\textwidth]{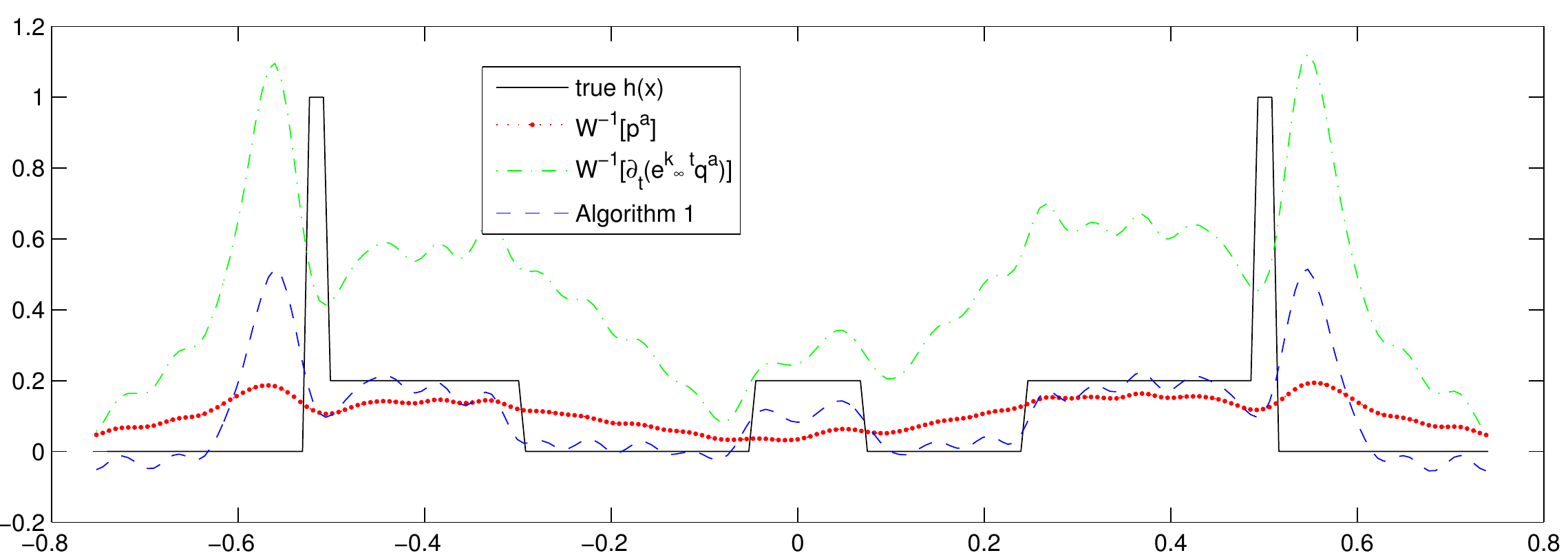}
  \caption{\label{fig:line2}Measurements along a line and NSW model. Top left: Ground truth.
  Top Right: the simulated pressure data $\pa$. Middle: Reconstruction by universal back-projection
  (not taking into account attenuation), by compensated attenuation \autoref{eq:RescaledBp} and by \autoref{pseudo-code} with noise-free and $20\%$ noise.
  Bottom: Cross section through ground truth and reconstructions.}
\end{figure}
\clearpage

\section*{Conclusion}
We have presented explicit reconstruction formulas for photoacoustic imaging in acoustically attenuating media,
which are based on the universal back-projection formula. We have presented a numerical algorithm and showed numerical
reconstructions, which were compared with attenuation compensation techniques. The numerical simulations
show that the new technique can produce better visualization in 2D with a similar numerical complexity.

\section*{Acknowledgment}
The work is support by the ``Doctoral Program Dissipation and Dispersion in Nonlinear PDEs'' (W1245). OS is also supported by the FWF-project ``Interdisciplinary Coupled Physics Imaging'' (FWF P26687).

\section*{References}

\printbibliography[heading=none]

\begin{appendix}
\section{Appendix}
\begin{theorem}
 \label{th:st_phase} \cite[Section 7.7]{Hoe03}
 Let $f,g \in C^\infty(\R;\C)$ satisfying
       \begin{itemize}
        \item $\supp g$ is compact and
        \item $\Im f(\omega) \geq 0$.
       \end{itemize}
       Then there exists a constant $C_1 > 0$ such that for all $l \in \N$ and all $\tau \geq 0$,
       \begin{equation}\label{eq:stationary_phase}
        \begin{aligned}
         ~& \tau^l\left|\intb{\omega} e^{\i \tau f(\omega)} g(\omega) \d \omega\right| \\
         \leq & C_1 \sum_{\alpha=0}^{l} \sup_{\omega \in \R}
         \left|d^\alpha g(\omega)\right|(|f'(\omega)|^2+ \Im f(\omega))^{\alpha/2-l}.
        \end{aligned}
       \end{equation}
\end{theorem}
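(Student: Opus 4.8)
This is the classical complex stationary-phase estimate of \cite[Section 7.7]{Hoe03}; to reprove it I would run the standard non-stationary phase integration by parts, adapted to a complex phase with $\Im f\ge0$, and combine the oscillatory gain with the decay $\abs{\e^{\i\tau f}}=\e^{-\tau\Im f}$. Write $\rho:=\abs{f'}^2+\Im f\ge0$ for the weight appearing on the right. Since the exponents $\alpha/2-l$ are negative for $0\le\alpha\le l$ (with $l\ge1$), the right-hand side is $+\infty$ at any point of $\supp g$ where $\rho$ vanishes, so the inequality is vacuous there and it suffices to argue on the open set $\{\rho>0\}$, on which the operator below is well defined; the borderline locus $\{\rho=0\}$ is recovered by the routine regularization replacing $\rho$ by $\rho+\varepsilon$ and letting $\varepsilon\downarrow0$. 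In the application of this paper $\rho$ is in fact bounded below and above by positive constants on $\supp g$, which is why \autoref{eq:stationary_phase_1} may replace $\rho^{\alpha/2-l}$ by $C_2^{\alpha/2-l}$.

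First I would introduce the first-order operator
\[
L:=\frac{1}{\rho}\left(\frac{\overline{f'}}{\i\tau}\,\partial_\omega+\Im f\right),
\]
which reproduces the oscillatory factor exactly: from $\partial_\omega\e^{\i\tau f}=\i\tau f'\e^{\i\tau f}$ and $\overline{f'}f'=\abs{f'}^2$ one gets $L[\e^{\i\tau f}]=\rho^{-1}(\abs{f'}^2+\Im f)\e^{\i\tau f}=\e^{\i\tau f}$. Inserting this identity under the integral sign and integrating by parts (there are no boundary terms because $g$ is compactly supported) moves $L$ onto the amplitude as its formal transpose ${}^{t}L$, whose first-order part carries a factor $\tau^{-1}$ and the coefficient $\overline{f'}/\rho$, and whose zeroth-order part is the multiplier $\Im f/\rho$. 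Iterating $l$ times yields
\[
\int\e^{\i\tau f}g\d\omega=\int\e^{\i\tau f}\,({}^{t}L)^{l}g\d\omega,
\]
and the base case $l=0$ is immediate from $\abs{\int\e^{\i\tau f}g\d\omega}\le\int\abs{g}\e^{-\tau\Im f}\d\omega\le\abs{\supp g}\,\sup_\omega\abs{g}$.

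The core of the argument is then a weighted power count of $({}^{t}L)^{l}g$. Expanding the composition produces a finite sum of terms in which the first-order (derivative) part has been used some number of times and the multiplier $\Im f/\rho$ the remaining times, with the accumulated derivatives distributed by the Leibniz rule over $g$ and over the coefficients. Two elementary facts drive the bookkeeping. The oscillatory part gains half a power of the weight per derivative, via $\abs{f'}\le\rho^{1/2}$ together with $\abs{\rho'}\le C\rho^{1/2}$ on the compact set $\supp g$ (all derivatives of $f$ being bounded there), so that a derivative landing on a coefficient costs only an extra $\rho^{-1/2}$ and lowers by one the number of derivatives reaching $g$. The damping part is exploited through the scalar inequality $s^{m}\e^{-\tau s}\le(m/\e)^{m}\tau^{-m}$ for $s\ge0$, which lets $\e^{-\tau\Im f}$ convert surviving factors $\Im f$ into negative powers of $\tau$. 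Tracking these two gains simultaneously is what forces the half-integer weight $\rho^{\alpha/2-l}$ attached to $\sup\abs{d^\alpha g}$, and summing the finitely many terms over $\alpha$ produces the claimed bound with $C_1$ depending only on $l$, on $\abs{\supp g}$, and on finitely many sup-norms of derivatives of $f$ over $\supp g$.

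The main obstacle is exactly this combined bookkeeping: a derivative falling on the oscillatory coefficient yields a clean factor $\tau^{-1}\rho^{-1/2}$, whereas a derivative falling on the damping multiplier consumes a factor $\Im f$ that would otherwise have been absorbed by $\e^{-\tau\Im f}$, so the two mechanisms gain at different rates. Reconciling them, and thereby verifying that \emph{every} distribution of derivatives is controlled by no worse than $\rho^{\alpha/2-l}\sup\abs{d^\alpha g}$ uniformly in $\tau\ge0$, is the delicate combinatorial heart of the estimate (and is the reason the amplitude must be differentiated a number of times comparable to the power of $\tau$ gained). This is precisely the content carried out in \cite[Section 7.7]{Hoe03}, which I would cite for the detailed induction; the remaining ingredients above are the standard complex non-stationary phase computation.
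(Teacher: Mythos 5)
The paper contains no proof of this theorem at all: it is quoted from H\"ormander \cite[Section 7.7]{Hoe03} (Theorem 7.7.1 there), and the appendix states it purely as a cited tool, to be combined with \autoref{le:hoermander} in the proof of \autoref{eq:nu}. Your sketch is, in substance, a faithful outline of exactly the cited argument: the reproducing identity $L[\e^{\i\tau f}]=\e^{\i\tau f}$ for your operator $L$ checks out, the $l$-fold iteration of ${}^{t}L$ is H\"ormander's, and you correctly isolate the two analytic engines --- the gain $\abs{f'}\le\rho^{1/2}$ together with $\abs{\rho'}\le C\rho^{1/2}$ on $\supp g$ (the latter is where $\Im f\ge0$ actually enters, via the classical inequality $\abs{F'}^2\le 2F\sup\abs{F''}$ for nonnegative $C^2$ functions, H\"ormander's Lemma 7.7.2, which you assert but do not justify), and the conversion $s^{m}\e^{-\tau s}\le(m/\e)^{m}\tau^{-m}$. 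Deferring the combinatorial induction to \cite[Section 7.7]{Hoe03} is consistent with the paper, which defers everything to the same source.

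Two small repairs are needed in your treatment of the degenerate locus. First, dismissing $\{\rho=0\}$ as vacuous is too quick: the right-hand side of \autoref{eq:stationary_phase} involves the supremum of the \emph{product} $\abs{d^\alpha g}\,\rho^{\alpha/2-l}$, which can be finite even when $\rho$ vanishes on $\supp g$ (namely when $g$ vanishes there to sufficiently high order), and then the inequality has content, so the regularization is genuinely needed rather than a routine afterthought. Second, regularizing by $\rho\mapsto\rho+\ve$ inside $L$ breaks the exact identity: one gets
\begin{equation*}
 L_\ve\left[\e^{\i\tau f}\right]=\frac{\rho}{\rho+\ve}\,\e^{\i\tau f},
\end{equation*}
and the resulting error is not negligible near $\{\rho=0\}$ at each of the $l$ iterations. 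The standard fix is to replace $f$ by $f+\i\ve$ instead: this effects precisely $\rho\mapsto\rho+\ve\ge\ve$ while keeping the reproducing identity exact, and the estimate for $f$ follows as $\ve\downarrow0$, because the left-hand side changes only by the factor $\e^{-\tau\ve}\to1$ while $(\rho+\ve)^{\alpha/2-l}\le\rho^{\alpha/2-l}$ for the negative exponents, with constants uniform in $\ve$. For the use made of the theorem in this paper the issue is moot anyway, since \autoref{le:hoermander} gives $\rho\ge C_2>0$ everywhere, as you observe.
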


\begin{lemma}
\label{le:hoermander}
 Let $\kappa$ be an attenuation coefficient (cf. \autoref{deAttCoeff}), then there exists a constant $C_2>0$ such that
 \begin{equation}\label{eq:klb_derivative}
  \abs{\kappa'(\omega)}^2 + \Im \kappa (\omega) \geq C_2\;.
 \end{equation}
\end{lemma}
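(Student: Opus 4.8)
The plan is to show that the continuous, nonnegative function $\Phi(\omega):=\abs{\kappa'(\omega)}^2+\Im\kappa(\omega)$ is bounded away from zero, splitting the argument into a pointwise strict positivity statement and a uniform control as $\abs{\omega}\to\infty$. First I would record that both summands are nonnegative: $\Im\kappa\ge0$ since $\kappa\in C^\infty(\R;\overline\H)$, and $\abs{\kappa'}^2\ge0$ trivially. Moreover, differentiating the symmetry relation $\kappa(-\omega)=-\overline{\kappa(\omega)}$ (condition \ref{enAttCoeffSymm} of \autoref{deAttCoeff}) gives $\kappa'(-\omega)=\overline{\kappa'(\omega)}$, so $\Phi$ is even and it suffices to bound it for $\omega\ge0$.

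The core of the argument is the observation that $w(z):=\Im(\tilde\kappa(z)-z/c)$ is, by the fourth condition of \autoref{deAttCoeff}, nonnegative and harmonic on $\H$, continuous up to $\overline\H$, with boundary trace $w(\omega)=\Im\kappa(\omega)$. Writing $\tilde\kappa=a+\i b$ and using the Cauchy--Riemann equations one gets $\partial_y w(\omega,0)=\Re\kappa'(\omega)-1/c$. I would then prove strict pointwise positivity of $\Phi$ by contradiction: if $\Phi(\omega_0)=0$ then $\Im\kappa(\omega_0)=0$ and $\kappa'(\omega_0)=0$; the first equality says $w$ attains its minimal value $0$ at the boundary point $\omega_0$, so (since $w\not\equiv0$ forces $w>0$ in the interior by the minimum principle) Hopf's boundary point lemma makes the inward normal derivative strictly positive, i.e.\ $\Re\kappa'(\omega_0)>1/c>0$, contradicting $\kappa'(\omega_0)=0$. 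The degenerate case $w\equiv0$ is excluded directly, since then $\tilde\kappa(z)=z/c+\text{const}$ and $\kappa'\equiv1/c\neq0$. Hence $\Phi>0$ everywhere, and by continuity $\Phi$ has a strictly positive minimum on every compact interval.

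It remains to control $\Phi$ as $\omega\to+\infty$, and this is where I expect the real difficulty. The pointwise bound does not transfer to a uniform one, because $\Re\kappa'(\omega)$ need not dominate $1/c$ at finite frequencies --- it is the boundary normal derivative of a nonnegative harmonic function and can be negative, as one checks on explicit Herglotz examples. My plan is to exploit the half-plane property globally: the fourth condition together with the polynomial bounds make $g:=\tilde\kappa-z/c$ a Nevanlinna-type function, with a representation $g(z)=\alpha+\beta z+\int_\R\bigl(\tfrac{1}{t-z}-\tfrac{t}{1+t^2}\bigr)\,d\mu(t)$ where $\alpha\in\R$, $\beta\ge0$, $\mu\ge0$. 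Differentiating, $g'(z)=\beta+\int_\R(t-z)^{-2}\,d\mu(t)$, and for real $\omega$ outside the essential support of $\mu$ the integrand $(t-\omega)^{-2}$ is positive, so $\Re\kappa'(\omega)=1/c+\Re g'(\omega)\ge1/c+\beta$ there. In the weakly attenuating regime $\mu$ has density proportional to $\Im\kappa(\omega)=\kinf+\Im k_*(\omega)$, whose non-constant part $\Im k_*$ lies in $L^2(\R)$; the constant part contributes nothing to $g'$, while a Riemann--Lebesgue/decay estimate shows $\int(t-\omega)^{-2}\Im k_*(t)\,dt\to0$ as $\omega\to+\infty$. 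This yields $\liminf_{\omega\to+\infty}\Re\kappa'(\omega)\ge1/c>0$, so $\abs{\kappa'(\omega)}^2\ge(\Re\kappa'(\omega))^2$ is bounded below for large $\omega$.

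Combining the regimes --- a positive minimum on a large interval $[0,\Omega]$ and the explicit positive $\liminf$ for $\omega>\Omega$ --- gives a common lower bound $C_2>0$, which by evenness covers $\omega<0$ as well. I expect the main obstacle to be precisely the passage $\omega\to+\infty$: upgrading the pointwise Hopf inequality to a uniform one forces me to use the global Nevanlinna representation and to argue that the resolvent integral $\int(t-\omega)^{-2}\,d\mu(t)$ cannot conspire to cancel $1/c+\beta$ along some escaping sequence. The decisive inputs are the $L^2$-decay of $k_*$ and the absence of a singular part of $\mu$ concentrating at infinity, both of which are consequences of the holomorphic half-plane constraint rather than of the pointwise positivity alone.
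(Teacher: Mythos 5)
Your first two steps (evenness via the symmetry condition, and pointwise strict positivity of $\Phi:=\abs{\kappa'}^2+\Im\kappa$ via the minimum principle and Hopf's lemma applied to $w=\Im(\tilde\kappa(z)-z/c)$) are reasonable, and you correctly identify that the entire difficulty is uniformity as $\abs{\omega}\to\infty$. But the argument you propose for that regime has a genuine gap, and it is exactly there that the paper does something different. First, the lemma is stated for a \emph{general} attenuation coefficient in the sense of \autoref{deAttCoeff}, whereas your asymptotic step invokes the weak-attenuation decomposition $\kappa=\omega+\i\kinf+k_*$ with $k_*\in L^2$, which is not available. Second, and more seriously, the claim $\liminf_{\omega\to+\infty}\Re\kappa'(\omega)\ge 1/c$ is unsupported: for $\omega$ inside the support of the Herglotz measure (the generic situation, since by \cite[Formula (C9)]{Nus72} the measure has density $\sigma'$ with $\pi(1+\omega^2)\sigma'(\omega)=\Im\kappa(\omega)$), the expression $\int(t-\omega)^{-2}\,d\mu(t)$ diverges as written and must be interpreted as the boundary limit $\lim_{\eta\to0^+}\int(1+\nu^2)\frac{(\nu-\omega)^2-\eta^2}{((\nu-\omega)^2+\eta^2)^2}\sigma'(\nu)\,d\nu$. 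That limit carries a genuinely negative contribution from the mass of $\mu$ near $\omega$; the only lower bound the representation yields is of the form $\Re\kappa'(\omega)\ge A-2(1+C_r)\sqrt{(1+\omega^2)\sigma'(\omega)}$, which stays above a positive constant only where $\Im\kappa(\omega)$ is small, and nothing in \autoref{deAttCoeff} forces $\Im\kappa(\omega)\to0$ (constant attenuation already has $\Im\kappa\equiv\kinf>0$). Your Riemann--Lebesgue heuristic controls the far-field part of the integral but not the singular local part at $t=\omega$, which is precisely the part that can cancel $1/c$.

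The idea you are missing is the \emph{tradeoff} that closes the paper's proof: whenever the negative correction to $\Re\kappa'(\omega)$ is large, it is large only because $\Im\kappa(\omega)$ is large, and $\Im\kappa$ is the other summand of $\Phi$. The paper splits the boundary limit of $\Re\tilde\kappa'(\omega+\i\eta)$ into the regions $\abs{\nu-\omega}^2\gtrless(1+\omega^2)\sigma'(\omega)$, shows the outer part is nonnegative and the inner part is bounded below by $-2(1+C_r)\sqrt{(1+\omega^2)\sigma'(\omega)}$ via a Taylor expansion and the explicit integral $\int_{-a}^{a}(\nu^2-\eta^2)(\nu^2+\eta^2)^{-2}\,d\nu=-2a/(a^2+\eta^2)$, and then observes that with $\rho=\sqrt{(1+\omega^2)\sigma'(\omega)}$ one has $\Phi(\omega)\ge\pi\rho^2+\bigl(\max\set{0,A-2(1+C_r)\rho}\bigr)^2\ge\pi A^2/(4(1+C_r)^2+\pi)$ uniformly in $\omega$ by minimizing the quadratic in $\rho$. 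Without coupling the two terms of $\Phi$ in this way, the compactness-plus-liminf bookkeeping you outline cannot produce a bound that is uniform over all $\omega$.
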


\begin{proof}
The fourth assumption of \autoref{deAttCoeff} ensures that the maximal speed of propagation $c$ is finite.
Then from \cite[Proposition 2.9]{ElbSchShi16_report}, it follows that the holomorphic extension
$\tilde{\kappa}$ (cf. \autoref{deAttCoeff})of $\kappa$ to the upper half plane can be represented as
\begin{equation}\label{eqNevanlinna}
\tilde\kappa(z) = Az+B+\intb{\nu}\frac{1+z\nu}{\nu-z}\d\sigma(\nu),\quad z\in\H,
\end{equation}
where $A=\frac{1}{c}>0,B\in\R$ are constants, and $\sigma:\R\to\R$ is a monotonically increasing function of bounded variation.

From \cite[Formula (C9)]{Nus72} we know that if $\tilde{\kappa}$ satisfies \autoref{eqNevanlinna}, then
\begin{equation}
 \label{eq:im}
 \nu \to \Im \kappa(\nu) = \pi(1+\nu^2)\sigma'(\nu) \text{ for all } \nu \in \R\;.
\end{equation}
Because, by assumption $\kappa \in C^\infty(\R;\C)$, and because $\sigma:\R \to \R$ is a monotonically increasing
function of bounded variation, we conclude from \autoref{eq:im} that $\sigma'\in C^\infty(\R; \R) \cap L^1(\R; \R)$.
Moreover, for all fixed $z\in\H$, because $\nu -z \neq 0$ for all $\nu \in \R$, we have
$\nu \to \frac{1+\nu^2}{(\nu-z)^2}$ is uniformly bounded and thus the function
\begin{equation*}
 \nu \to \frac{1+\nu^2}{(\nu-z)^2} \sigma'(\nu) \in L^1(\R; \C)\;.
\end{equation*}
Differentiation of \autoref{eqNevanlinna} with respect to $z$ and taking into account that
$\sigma'\in C^\infty(\R; \R) \cap L^1(\R; \R)$
yields
\begin{equation*}
\tilde{\kappa}'(z) = A+\intb{\nu}\frac{(1+\nu^2)}{(\nu-z)^2} \sigma'(\nu) \d\nu,\quad z\in\H.
\end{equation*}
Let now $z=\omega+\i \eta$ and take real parts in the above formula to get
\begin{equation}\label{eq:kappa_derivative1}
\Re\tilde{\kappa}'(\omega+\i \eta) = A+\intb{\nu}(1+\nu^2)\frac{(\nu-\omega)^2-\eta^2}{((\nu-\omega)^2+\eta^2)^2}
\sigma'(\nu)\d\nu.
\end{equation}
We are proving now that there exists a constant $C_r>0$ such that
\begin{equation}\label{eq:kappa_derivative_bound1}
\Re\kappa'(\omega) = \lim_{\eta \to 0^+} \Re \tilde{\kappa}'(\omega + \i \eta) \geq
A-2(1+C_r)\sqrt{(1+\omega^2)\sigma'(\omega)}.
\end{equation}
%

Let
\begin{equation*}
 \begin{aligned}
  \mathcal{A}_+ &:= \set{\nu \in \R: |\nu-\omega|^2 \geq (1+\omega^2)\sigma'(\omega)},\\
  \mathcal{A}_- &:= \set{\nu \in \R: |\nu-\omega|^2 < (1+\omega^2)\sigma'(\omega)},\\
  \mathcal{A}_0 &:= \set{\hat{\nu} \in \R: |\hat{\nu}|^2 < (1+\omega^2)\sigma'(\omega)}\;.
 \end{aligned}
\end{equation*}
For $\omega \in \R$ and $\eta > 0$ let
\begin{equation*}
\nu \to \rho_{\omega,\eta}(\nu):= (1+\nu^2)\frac{(\nu-\omega)^2-\eta^2}{((\nu-\omega)^2+\eta^2)^2}\sigma'(\nu) \text{ for all }
\omega \neq \nu \in \R.
\end{equation*}
The function $\rho_{\omega,\eta}$ can be estimated as follows:
 \begin{equation*}
\rho_{\omega,\eta}(\nu)= \frac{1+\nu^2}{(\nu-\omega)^2+\eta^2}\underbrace{\frac{(\nu-\omega)^2-\eta^2}{(\nu-\omega)^2+\eta^2}}_{\leq 1} \sigma'(\nu)
\end{equation*}
Moreover, since $\nu\in \mathcal{A}_+$, we find that
\begin{equation*}
\begin{aligned}
\frac{1+\nu^2}{(\nu-\omega)^2+\eta^2} \leq \frac{1+\nu^2}{(\nu-\omega)^2}=
\underbrace{\frac{1+\nu^2}{1+(\nu-\omega)^2}}_{\leq 2(1+\omega^2)}
\underbrace{\frac{1+(\nu-\omega)^2}{(\nu-\omega)^2}}_{\leq 1+((1+\omega^2)\sigma'(\omega))^{-1}},\\
\end{aligned}
\end{equation*}
where the inequality $\frac{1+\nu^2}{1+(\nu-\omega)^2}\leq 2(1+\omega^2)$ is a consequence of the algebraic
identity $2(1+\omega^2)(1+(\nu-\omega)^2)-(1+\nu^2)=2\omega^2(\nu-\omega)^2+(\nu-2\omega)^2+1>0$.
Therefore with $C_\omega= 2(1+\omega^2)(1+((1+\omega^2)\sigma'(\omega))^{-1})$ it follows that
\begin{equation*}
 \abs{\rho_{\omega,\eta}(\nu)} \leq C_\omega \sigma'(\nu),
\end{equation*}
and because $\sigma' \in L^1(\R; \R)$ the latter means that the functions $\set{\rho_{\omega,\eta}: \eta > 0}$ are uniformly
dominated by an $L^1(\R; \R)$ function.
Therefore we can apply the dominated convergence theorem and get
\begin{equation}\label{eq:kappa_derivative_bound2}
\lim_{\eta\to0^+} \int\limits_{\mathcal{A}_+} \rho_{\omega,\eta}(\nu) \d\nu
= \int\limits_{\mathcal{A}_+} \rho_{\omega,0}(\nu) \d\nu \geq0.
\end{equation}

To estimate $\int_{\mathcal{A}_-} \rho_{\omega,\eta}(\nu) \d \nu$, we use the Taylor's expansion of
$\omega \to (1+(\hat{\nu}+\omega)^2) \sigma'(\hat{\nu}+\omega)$ and get
\begin{equation}
\label{eq:taylorII}
 (1+(\hat{\nu}+\omega)^2) \sigma'(\hat{\nu}+\omega) = (1+\omega^2) \sigma'(\omega) + \hat{\nu}\left( (1+\omega^2)\sigma''(\omega)+2\omega\sigma'(\omega)\right)+ r(\hat{\nu}),
\end{equation}
with
\begin{equation*}
 \abs{r(\hat{\nu})} \leq C_r \hat{\nu}^2 \text{ for all } \hat{\nu} \in \mathcal{A}_-.
\end{equation*}
Using the substitution $\nu \to\hat{\nu}:=\nu-\omega$ and \autoref{eq:taylorII} we get
\begin{equation}
\label{eq:rho_omega_eta}
\begin{aligned}
~ & \lim_{\eta \to 0^+}\int\limits_{\mathcal{A}_-} \rho_{\omega,\eta}(\nu) \d \nu \\
&= \lim_{\eta \to 0^+} \int\limits_{\mathcal{A}_0} (1+(\hat{\nu}+\omega)^2) \sigma'(\hat{\nu}+\omega)
                       \frac{\hat{\nu}^2-\eta^2} {(\hat{\nu}^2+\eta^2)^2} \d\hat{\nu}\\
&= (1+\omega^2) \sigma'(\omega) \lim_{\eta \to 0^+} \int\limits_{\mathcal{A}_0}
   \frac{\hat{\nu}^2-\eta^2} {(\hat{\nu}^2+\eta^2)^2}\d \hat{\nu}\\
&\quad+
   \left( (1+\omega^2)\sigma''(\omega)+2\omega\sigma'(\omega)\right) \lim_{\eta \to 0^+}
   \int\limits_{\mathcal{A}_0}\hat{\nu}\frac{\hat{\nu}^2-\eta^2} {(\hat{\nu}^2+\eta^2)^2}\d \hat{\nu}\\
& \quad + \lim_{\eta \to 0^+} \int\limits_{\mathcal{A}_0} r(\hat{\nu}) \frac{\hat{\nu}^2-\eta^2} {(\hat{\nu}^2+\eta^2)^2}\d \hat{\nu}
\end{aligned}
\end{equation}
Plugging in the integral formulas
\begin{equation*}
\int\limits_{-a}^{a}\frac{\nu^2-\eta^2}{(\nu^2+\eta^2)^2}\d\nu=\frac{-2a}{a^2+\eta^2} \text{ and }
\int\limits_{-a}^{a}\nu\frac{\nu^2-\eta^2}{(\nu^2+\eta^2)^2}\d\nu=0
\end{equation*}
into \autoref{eq:rho_omega_eta} we get for the first term with $a = \sqrt{(1+\omega^2)\sigma'(\omega)}$
\begin{equation}\label{eq:NevanSmallLowOrder}
\begin{aligned}
& (1+\omega^2) \sigma'(\omega) \lim_{\eta\to0^+} \int\limits_{\mathcal{A}_0} \frac{\hat{\nu}^2-\eta^2} {(\hat{\nu}^2+\eta^2)^2}\d\hat{\nu}\\
& = (1+\omega^2)\sigma'(\omega)\lim_{\eta\to0^+}\frac{-2\sqrt{(1+\omega^2)\sigma'(\omega)}}{(1+\omega^2)\sigma'(\omega)+\eta^2}=-2\sqrt{(1+\omega^2)\sigma'(\omega)},
\end{aligned}
\end{equation}
and the second integral term in \autoref{eq:rho_omega_eta} is vanishing, and for the third term we get
\begin{equation}\label{eq:NevanSmallHighOrder}
\left|\int\limits_{\mathcal{A}_0} r(\hat{\nu}) \frac{\hat{\nu}^2-\eta^2}{(\hat{\nu}^2+\eta^2)^2}\d\hat{\nu}\right|
\leq C_r \int\limits_{\mathcal{A}_0} 1 \d\hat{\nu} \leq
2C_r\sqrt{(1+\omega^2)\sigma'(\omega)}.
\end{equation}
Using the estimates \autoref{eq:NevanSmallLowOrder} and \autoref{eq:NevanSmallHighOrder} in \autoref{eq:rho_omega_eta} we get
\begin{equation}\label{eq:kappa_derivative_bound3}
 \lim_{\eta \to 0^+}\int\limits_{\mathcal{A}_-}  \rho_{\omega,\eta}(\nu) \d \nu \geq -2(1+C_r) \sqrt{(1+\omega^2)\sigma'(\omega)}.
\end{equation}
Considering the integral \autoref{eq:kappa_derivative1} as the sum of the two integrals over $\mathcal{A}_{\pm}$
and using the estimates \autoref{eq:kappa_derivative_bound2} and \autoref{eq:kappa_derivative_bound3} we get
\begin{equation*}
\lim_{\eta\to0^+}\Re\tilde{\kappa}'(\omega+\i \eta)\geq A-2(1+C_r)\sqrt{(1+\omega^2)\sigma'(\omega)}.
\end{equation*}
Therefore, $|\Re\kappa'(\omega)|\geq\max(0,A-2(1+C_r)\sqrt{(1+\omega^2)\sigma'(\omega)})$ and together with
\autoref{eq:im} it follows that
\begin{equation*}
\begin{aligned}
~& |\kappa'(\omega)|^2+\Im\kappa(\omega)\\
\geq &\pi (1+\omega^2)\sigma'(\omega)+|\Re \kappa'(\omega)|^2\\
\geq & \pi(1+\omega^2)\sigma'(\omega)+\left( \max \set{0,A-2(1+C_r)\sqrt{(1+\omega^2)\sigma'(\omega)}} \right)^2\\
\geq & \underbrace{\frac{\pi A^2}{4(1+C_r)^2+\pi}}_{:=C_2},
\end{aligned}
\end{equation*}
where in the last inequality we estimated the minimum of the quadratic function
$\rho \to A^2 - 4A (1+C_r) \rho + (4(1+C_r)^2+\pi)\rho^2$.
\end{proof}

\end{appendix}
\end{document}